%
%
\documentclass[3p]{elsarticle}
\usepackage[english]{babel}
\usepackage{amsmath}
\usepackage{amsfonts}
\usepackage{amssymb}
\usepackage{amsthm}
\usepackage{graphicx}
\usepackage{array}
\usepackage{tabularx}
\usepackage{color}
\usepackage{tcolorbox}

\newtheorem{theorem}{Theorem}

\newtheorem{lemma}{Lemma}

\begin{document}

\begin{frontmatter}

\title{Bounds for Wave Speeds in the Riemann Problem: \\
Direct Theoretical Estimates}


\author[UNITN]{E. F. Toro}
\ead{eleuterio.toro@unitn.it}
\address[UNITN]{Laboratory of Applied Mathematics, DICAM, University of Trento, Italy}

\author[MATH]{L. O. M{\"u}ller \corref{cor1}}
\ead{lucas.muller@unitn.it}
\address[MATH]{Department of Mathematics, University of Trento, Italy}

\author[UNITN]{A. Siviglia}
\ead{annunziato.siviglia@unitn.it}

\cortext[cor1]{Corresponding author}

\begin{abstract}

In this paper we provide bound estimates for the two fastest wave speeds emerging from the solution of the Riemann problem 
for three well-known hyperbolic systems, namely the Euler equations of gas dynamics, the shallow water equations and the blood flow equations for arteries. Several approaches are presented, all being direct, that is non-iterative. The resulting bounds range from crude but simple estimates to accurate but sophisticated estimates that make limited use of information from the solution of the Riemann problem. Through a carefully chosen suite of test problems we asses our wave speed estimates against exact solutions and against previously proposed wave speed estimates.  The results confirm that the derived theoretical bounds are actually so, from below and above, for minimal and maximal wave speeds respectively. The results also show that popular previously proposed estimates do not bound the true wave speeds in general. 
Applications in mind, but not pursued here,  include (i) reliable implementation of the Courant condition to determine a stable time step in all explicit methods for hyperbolic equations; (ii) use in local time stepping algorithms and (iii) construction of  HLL-type numerical fluxes for hyperbolic equations. 

\end{abstract}

\begin{keyword}
Hyperbolic equations  \sep Riemann problem  \sep waves  \sep bounds for wave speeds \sep  Courant condition  \sep stability
\end{keyword}
\end{frontmatter}

\section{Introduction}

Several computational approaches for approximating systems of hyperbolic equations such as
\begin{equation}                            \label{cl}
      \partial_{t} {\bf Q}(x,t) + \partial_{x} {\bf F} ({\bf Q}(x,t)) = {\bf 0}  \;
\end{equation}
in one, or multiple space dimensions, require the estimation of wave speeds present in the system at every time step. This is certainly the case for all explicit methods of the form 
\begin{equation}                               \label{fv}
      {\bf Q}^{n+1}_{i}={\bf Q}^{n}_{i} - \frac{\Delta t}{\Delta x}[{\bf F}_{i+\frac{1}{2}}-{\bf F}_{i-\frac{1}{2}}] \;,
\end{equation}
in which cell averages ${\bf Q}^{n}_{i}$ are updated to ${\bf Q}^{n+1}_{i}$ by means of intercell  numerical fluxes ${\bf F}_{i+\frac{1}{2}}$. The time-marching scheme
(\ref{fv}) needs a reliable estimation of the time step $\Delta t$, under the Courant condition
\begin{equation}                              \label{deltaT}
      \Delta t = C_{cfl} \frac{\Delta x}{S_{max}^{n}}  \;,
\end{equation}
which in turn requires the mesh size $\Delta x$ and the maximum wave speed $S_{max}^{n}$ present at time level $n$ throughout the computational domain, including cells involved in boundary conditions. In addition, there will be a CFL coefficient $C_{cfl}$ that depends on the linearised stability analysis for each particular method in use. This is necessary at every time $n$ in order to advance the solution in time to the next time level $n+1$.  In practical applications one chooses $S_{max}^{n}$ by simply evaluating the eigenvalues of the system. For example, for the one-dimensional Euler equations with eigenvalues $\lambda_{1}=u-c$, $\lambda_{3}=u$, $\lambda_{3}=u+c$, where $u$ is particle velocity and $c$ is sound speed, one uses
\begin{equation}                              \label{Smax}
     S_{max}^{n} = max_{i} \left\{|u^{n}_{i}|+c^{n}_{i}) \right\}  \;.
\end{equation}
Needless to say  estimate (\ref{Smax}) will not bound the wave speeds emerging from the interaction of the data at each cell interface.  This is one of the motivations for the present work. The above considerations regarding wave speeds and size of the time step are relevant to various numerical approaches, which include amongst others, the large class of finite volume methods \cite{Godlewski:1996a}, \cite{Toro:2009a}, \cite{LeVeque:2002a} and discontinuous Galerkin finite element methods \cite{Cockburn:1991a}, \cite{Cockburn:1998a}, \cite{Dumbser:2006a}, \cite{Dumbser:2007c}, \cite{Luo:2007a}, \cite{Luo:2008a}, \cite{Guermond:2016a}. The choice of the numerical flux ${\bf F}_{i+\frac{1}{2}}$ ranges from centred (non-upwind) methods to Godunov-type methods (upwind), with incomplete and complete \cite{Toro:2016b} Riemann solvers. All of them require a reliable choice of time step in (\ref{fv}) .

As already stated, the CFL coefficient $C_{cfl}$ in (\ref{deltaT}) depends on the linearised stability analysis for each particular method in use. For most well known methods, such as Godunov's method \cite{Godunov:1959a} and the Lax-Wedroff method \cite{Lax:1960a},  $C_{cfl} =1$, but there are other methods for which $C_{cfl} <1$. For example, for the Godunov centred method \cite{Godunov:1962a} $C_{cfl} =\frac{1}{2}\sqrt{2}$. The CFL coefficient represents the fraction of the cell width $\Delta x$ that a wave of speed $S_{max}^{n}$  can traverse in time  $\Delta t$ to preserve stability of the numerical method. Underestimation of the wave speed will result in violation of the CFL condition, and in principle, in instability of the numerical method. Surprisingly, in practice there tends to be a lot of uncertainties as to the correct value of the speed $S_{max}^{n}$. A usual devise to deal with such uncertanties is to use CFL coefficients that are clearly smaller than the upper limit value of $C_{cfl}$. For example, a common practice is to take $C_{cfl}$ to be $80\%$ or $90\%$ of the upper limit.  It is usual to use very unreliable estimates for the wave speeds and if the code crashes one re-starts the computations with a smaller value for the CFL coefficient. This is not a very satisfactory state of affairs. For low-order methods, dominated by large truncation errors that result in large numerical diffusion, the effect of underestimating the speeds might not be obvious. Higher-order methods will be more sensitive to underestimated wave speeds, or overestimated time steps. The main issue is then to obtain reliable estimates, ideally upper bounds, for the maximum wave speed. This means one must seek reliable estimates of all local wave speeds $S_{i+\frac{1}{2}}^{n}$ resulting from the interaction of data either side 
of the interface $i+\frac{1}{2}$. 

There are other computational aspects that also require a reliable estimate of local waves speeds, as pointed out by Guermond and Popov \cite{Guermond:2016a}.  Wave speed estimates enter in artificial viscosity methods; one requires a reliable $S_{max}^{n}$ to ensure that all entropy inequalities are satisfied. Reliable choices of local wave speeds $S_{i+\frac{1}{2}}^{n}$ are required in local time stepping schemes \cite{Dumbser:2007c}, \cite{Mueller:2016b}.  There are several numerical methods that in addition to requiring a reliable choice for $\Delta t$, require reliable estimates for all maximal and minimal wave speeds. The Lax-Friedrichs method, a centred method, requires a grid speed  $S^{n}=\frac{\Delta x}{\Delta t}$. A more sophisticated method is the Rusanov method \cite{Rusanov:1961a}, sometimes called the local Lax-Friedrichs method; this is an upwind method and requires local  wave speeds $S_{i+\frac{1}{2}}^{n}<\frac{\Delta x}{\Delta t}$ at each cell interface. Increasing the level of sophistication, the HLL method \cite{Harten:1983b} requires two local wave speeds per cell interface, which for $2 \times 2$ systems are enough. For larger systems one requires more local wave speeds, such as in the HLLC method \cite{Toro:1994c}. See also \cite{Dumbser:2016a}. 

Despite the importance of choosing reliable bounds for the minimal and maximal waves speeds, locally and globally, there is a lack of attention to this issue. Some works on this topic have been motivated by the design of HLL-type fluxes, see \cite{Davis:1988a}, \cite{Einfeldt:1988a}, \cite{Toro:1994a}, \cite{Batten:1997a}. More recent works motivated by artificial viscosity in continuous finite element methods has been reported by Guermond and Popov
\cite{Guermond:2016a}, who appear to be the first to have addressed this subject, in the context of the Euler equations. They essentially  proposed two ways of estimating bounds for the local maximal wave speed $S_{i+\frac{1}{2}}^{n}$, a direct, non-iterative method and an iterative but more accurate method. 

In the present paper we first review all the main existing approaches to estimate wave speeds. Then we propose new theoretical ways to estimate bounds for both the maximal and minimal  wave speeds at each intercell boundary; all proposed methods are direct (non-iterative). We do so for three hyperbolic systems of practical interest, namely the Euler equations of gas dynamics, the shallow water equations and the blood flow equations for arteries. Then by choosing appropriate test problems we compare our estimates against existing ones and against exact values obtained from the exact solution of the corresponding Riemann problem. It is shown that most existing methods fail, while all our estimates are confirmed to constitute bounds, as expected. Then amongst the proposed methods, some are more accurate than others and some are simpler than others. Inaccuracy of bounds will have a bearing on efficiency.

The rest of this paper is structured as follows. In Sect 2, via the Euler equations, we set the scene for the paper and provide exact wave speed reference solutions. 
In Sect. 3 we define the central aim of this paper and review existing estimates for the minimal and maximal wave speeds arising from the solution of the Riemann problem for the Euler equations. 
In Sect. 4 we propose three new approaches to estimate theoretical bounds for the minimal and maximal wave speeds for the Euler equations at each interface; we also perform a comparison of all present methods against available estimates through carefully devised test problems. In Sect. 5 we present wave speed bounds for the shallow water equations and assess the results. In Sect. 6 we do so for the blood flow equations for arteries. Conclusions are drawn in Sect. 7.

\section{The Euler Equations: Exact Wave Speeds}

In this section we set the scene, define the scope of the paper and provide exact wave speed reference solutions for the Euler equations. 

\subsection{Equations and the Riemann problem}

The Euler equations in one space dimension, in differential conservation form, read
\begin{equation}                            \label{ee1}
      \partial_{t} {\bf Q}(x,t) + \partial_{x} {\bf F} ({\bf Q}(x,t)) = {\bf 0}  \;. 
\end{equation}
${\bf Q}(x,t)=\left[\rho, \rho u, E \right]^{T}$ is the vector of conserved variables and ${\bf F(Q)}= \left[\rho u, \rho u^{2} + p, u(E+p)\right]^{T}$ is the flux vector. Here, $\rho$ is density, $u$ is particle velocity, $p$ is pressure and $E$ is total energy given in terms of kinetic energy $\frac{1}{2}u^{2}$ and specific internal energy $e$ as
\begin{equation}                            \label{ee2}
      E = \rho(\frac{1}{2}u^{2} + e)  \;,  \hspace{3mm} e(\rho, p) = \frac{p}{\rho(\gamma-1)} \;.
\end{equation}
The function $e = e(\rho, p)$ is called the (caloric) {\it equation of state}. Here we have taken the ideal gas case, where $\gamma$ is the ratio of specific heats, 
taken here as the constant $\gamma=1.4$. The eigenvalues of the Euler equations are
\begin{equation}                            \label{ee3}
      \lambda_{1} ({\bf Q}) = u-c \;, \hspace{3mm} \lambda_{2} ({\bf Q}) = u \;,  \hspace{3mm}\lambda_{3} ({\bf Q})= u+c \;, 
\end{equation}
where $c$ is the speed of sound,  given as
\begin{equation}                            \label{ee4}
       c = \sqrt{\frac{\gamma p}{\rho}} \;. 
\end{equation}
The corresponding right eigenvectors are
\begin{equation}                            \label{ee5}
       {\bf R}_{1}({\bf Q})=\left[\begin{array}[c]{cc}
		1 \\
		u-c \\
		H-uc
		\end{array}\right] \;,\;
        {\bf R}_{2}({\bf Q})=\left[\begin{array}[c]{cc}
		1 \\
		u \\
		\frac{1}{2}u^{2} 
		\end{array}\right] \;,\;
        {\bf R}_{3}({\bf Q})=\left[\begin{array}[c]{cc}
		1 \\
		u+c \\
		H+uc     
		\end{array}\right]  \;,
\end{equation}
where 
\begin{equation}                            \label{ee5b}
        H=(E+p)/\rho
\end{equation}
is the total specific enthalpy and scaling factors for the eigenvectors have been set to unity. For background see \cite{Toro:2009a}. \\

The Riemann problem for the Euler equations (\ref{ee1}) is the initial value problem 
\begin{equation}                                               \label{ee6}
	\left. \begin{array}{ll}
		\mbox{PDEs:} & \partial_{t}{\bf Q}(x,t) + \partial_{x} {\bf F}({\bf Q}(x,t))={\bf 0} \;,  \hspace{3mm} x \in \mathbb{R} \;, \hspace{3mm} t>0 \;,\\
		\mbox{ICs:}  & {\bf Q} (x,0) = 
			\left\{ \begin{array}{lcc}
			{\bf Q}_{L}     &  \mbox{ if } & x < 0 \;,  \\
		    {\bf Q}_{R}     &  \mbox{ if } & x > 0  \;, 
			\end{array} \right.
	\end{array} \right\}
\end{equation}
with ${\bf Q}_{L}$ and ${\bf Q}_{R}$ two prescribed constant vectors.
\begin{figure}
         \centerline{
         \includegraphics[scale=0.8,angle=0]{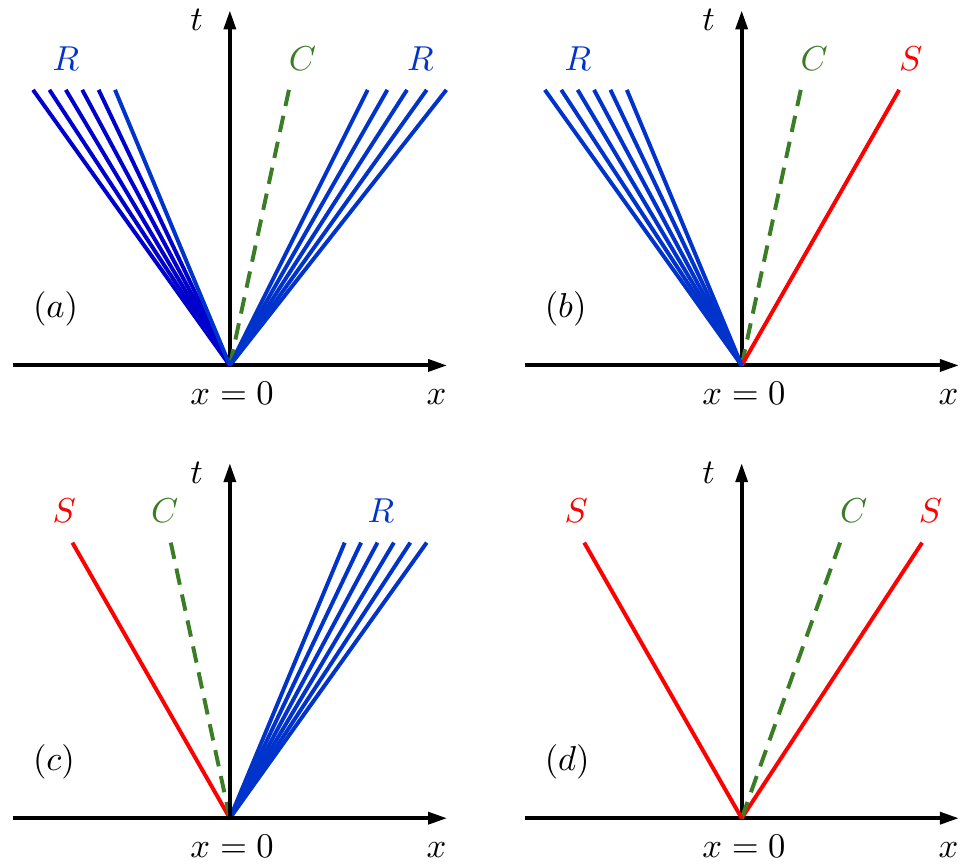}    
           }
          \vspace{2mm}
           \caption{Four possible wave patterns emerging from the  solution of the Riemann problem for the ideal, compressible Euler equations: (a) Rarefaction-Contact-Rarefaction; (b) Rarefaction-Contact-Shock; (c) Shock-Contact-Rarefaction and (d) Shock-Contact-Shock.}
           \label{fig:EEFourPatterns}
\end{figure}
The solution of (\ref{ee6}) consists of three wave families corresponding to the $\lambda_{k}-$characteristic fields,  $k=1,2,3$,  and separate four constant regions in the half $x$-$t$ plane. The resulting waves may be of three types: shocks (S), rarefactions (R) and contact discontinuities (C).
Fig. \ref{fig:EEFourPatterns} depicts the four possible wave patterns arising from the solution, namely: (a) Rarefaction-Contact-Rarefaction; (b) Rarefaction-Contact-Shock; (c) Shock-Contact-Rarefaction and (d) Shock-Contact-Shock. Fig.\ref{fig:EERPstrategy} depicts the generic structure of the solution in the $x$-$t$ plane. The solution in the entire $x$-$t$  half plane is characterised by four constant regions (wedges) $\mathcal R_{0}$, $\mathcal R_{1}$, $\mathcal R_{2}$ and $\mathcal R_{3}$ separated by three wave families. The  $\lambda_{1}$ and  $\lambda_{3}$ characteristic fields are genuinely non-linear and are associated with either shocks or rarefactions; the intermediate field associated with $\lambda_{2}$ is linearly degenerate and is associated with a contact discontinuity. Regions  $\mathcal R_{0}$ (left data) and  $\mathcal R_{3}$ (right data) are known {\it a-priori}. $\mathcal R_{1}$  and  $\mathcal R_{2}$ must be found, as well as the wave types associated to the non-linear fields (shocks or rarefactions).  

Finding the solution of the Riemann problem means finding ${\bf Q}(x,t)$ for $x \in \mathbb{R}$ and $t >0$.  Here we follow the method proposed in \cite{Toro:1989d}, see also Chap. 4 of \cite{Toro:2009a}. The solution strategy proceeds in two steps. First, one computes the pressure and velocity in the  {\it Star Region} depicted in Fig. \ref{fig:EERPstrategy}, called the {\it Star Region}. Then, the wave types are determined and the  solution in the full half plane is found. 

\subsection{Pressure in the Star Region}

Here we establish equations for computing the pressure  $p_{*}$ and the particle velocity $u_{*}$ in the {\it Star Region}, see Fig. \ref{fig:EERPstrategy}. 
Next we state a lemma that includes relevant properties of the Euler equations that will be used to discuss estimates for wave speed bounds.

 \begin{lemma} \label{eulRI} 
\noindent{\bf Exact Riemann solution and wave speeds:}  

\begin{enumerate}

\item {\bf Wave jumps across rarefactions: } Across the left rarefaction  the left Riemann invariant gives
\begin{equation}                         \label{eul:10}
      u_{*} + \frac{2}{\gamma-1} c_{*L} = u_{L} + \frac{2}{\gamma-1} c_{L}  \;,
\end{equation}
which implies the relations
\begin{equation}                         \label{eul:10b}
      u_{*} = u_{L} - f_{L}  \;; \hspace{3mm}
		f_{L}  =  \frac{2}{\gamma-1} ( c_{*L}-c_{L})  \;.
\end{equation}
Across the right rarefaction the right Riemann invariant gives
\begin{equation}                         \label{eul:11}
      u_{*} - \frac{2}{\gamma-1} c_{*R}= u_{R} - \frac{2}{\gamma-1}c_{R}  \;,
\end{equation}
which in turn implies the relations
\begin{equation}                         \label{eul:10b}
      u_{*} = u_{R} + f_{R}  \;; \hspace{3mm}
		f_{R}  =  \frac{2}{\gamma-1} ( c_{*R}-c_{R})  \;.
\end{equation}

\item  {\bf Wave jumps across shocks:} For a left shock, the Rankine-Hugoniot conditions give
\begin{equation}                          \label{eul:12} 
		u_{*} = u_{L} - f_{L}  \;; \hspace{3mm}
		f_{L}  = (p_{*}-p_{L})\sqrt{\frac{A_L}{p_* + B_L} } \;; \hspace{3mm} A_L =\frac{2}{(\gamma+1)\rho_L} \;; \hspace{3mm} B_L =\frac{\gamma-1}{\gamma+1}p_L \;.
\end{equation}
For a right shock wave the Rankine-Hugoniot conditions give
\begin{equation}                          \label{eul:14} 
		u_{*} = u_{R} + f_{R}  \;; \hspace{3mm}
		f_{R}  = (p_{*}-p_{R})\sqrt{\frac{A_R}{p_* + B_R} } \;; \hspace{3mm} A_R =\frac{2}{(\gamma+1)\rho_R}  \;; \hspace{3mm} B_R =\frac{\gamma-1}{\gamma+1}p_R \;.
\end{equation}
\item {\bf Shock speeds:} The speed of a left shock is
\begin{equation}                          \label{eul:13}
S_{L} = u_{L} - c_{L} q_{L} \;, \hspace{3mm} 
       q_{L} = \sqrt{1+\frac{\gamma+1}{2 \gamma}(y-1)} \;,  \hspace{3mm} y= \frac{p_{*}}{p_{L}} \;
\end{equation}
and of a right shock is
\begin{equation}                           \label{eul:15}
       S_{R} = u_{R} + c_{R} q_{R} \;, \hspace{3mm} 
       q_{R} = \sqrt{1+\frac{\gamma+1}{2 \gamma}(y-1)} \;,  \hspace{3mm} y= \frac{p_{*}}{p_{R}} \;.
\end{equation}
\item {\bf Solution for pressure $p_{*}$.} The solution for pressure $p_{*}$ in the Riemann problem   (\ref{ee6}) for the ideal Euler equations  (\ref{ee1})  is given by the root of 
\begin{equation}                          \label{ee7}
	  f(p,{\bf Q}_{ L},{\bf Q}_{ R}) \equiv f_{ L}(p,{\bf Q}_{ L})
	  +f_{ R}(p,{\bf Q}_{ R}) +\Delta u =0  \;, \hspace{3mm} \Delta u \equiv u_{ R}-u_{ L} \;,\;
\end{equation}
where the shock and rarefaction branches are
\begin{equation}                          \label{ee8} 
      f_{ L}(p,{\bf Q}_{ L})=\left\{
      \begin{array}{ll}
		 \displaystyle{ \left(p - p_{ L}\right)
		 \left[\frac{A_{ L}}{p + B_{ L}}\right]^{\frac{1}{2}} } \;, &  \mbox{  if $p>p_{ L}$ (shock) } \;,\;\\  &  \\
		\displaystyle{ \frac{2c_{ L}}{(\gamma -1)} \left [
		\left( \frac{p}{p_{ L}} \right) 
		^{\frac{\gamma -1}{2\gamma }}  -1 \right] }  \;,     &
		\mbox{if $p\leq p_{ L}$ (rarefaction)} \;,\;    
		\end{array} \right.
\end{equation} 
\begin{equation}                         \label{ee9} 
      f_{ R}(p,{\bf Q}_{ R})=\left\{
		 \begin{array}{ll}
		 \displaystyle{ \left(p - p_{ R}\right)
		 \left[\frac{A_{ R}}{p + B_{ R}}\right]^{\frac{1}{2}} }  \;,
		 &  \mbox{  if $p>p_{ R}$ (shock) } \;,\;\\
		 &    \\
		\displaystyle{ \frac{2c_{ R}}{(\gamma -1)} \left [
		\left( \frac{p}{p_{ R}} \right) 
		^{\frac{\gamma -1}{2\gamma }}  -1 \right]}   \;,   &
		\mbox{if $p\leq p_{ R}$ (rarefaction)} \;,\;    
		\end{array} \right.
\end{equation}
with $A_{ K}$ and  $B_{ K}$ (K=L,R) defined in (\ref{eul:12}) and  (\ref{eul:14}).

\item  {\bf Solution for velocity $u_{*}$.} Once $p_{*}$ is determined from solving (\ref{ee7}), the solution for the velocity $u_{*}$ in the Star Region follows as
\begin{equation}                        \label{ee11} 
      u_{*}=\frac{1}{2} (u_{ L}+u_{ R}) + \frac{1}{2} \left[f_{ R}(p_{*}, {\bf Q}_{ R})-f_{ L}(p_{*}, {\bf Q}_{ L})\right]  \;.
\end{equation}

\item {\bf All-rarefaction solution.} In the special case in which the function $f$ in (\ref{ee7}) is made up of rarefaction branches, then equation (\ref{ee7})  has exact solution given as
\begin{equation}                          \label{eeAllRarefaction}
      p_{ *rr}=\left[\frac{c_{ L}+c_{ R}-\frac{1}{2}(\gamma-1)(u_{ R}-u_{ L})}
      {c_{ L}/p_{ L}^{\frac{\gamma-1}{2\gamma}}
      +c_{ R}/p_{ R}^{\frac{\gamma-1}{2\gamma}}}
      \right]^{\frac{2\gamma}{\gamma-1}} \;.
\end{equation}

\end{enumerate}
\end{lemma}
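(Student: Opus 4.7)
The plan is to treat the six items as a sequence of essentially independent classical derivations driven by two tools: the generalized Riemann invariants for the genuinely nonlinear $\lambda_1$- and $\lambda_3$-fields (for rarefactions) and the Rankine-Hugoniot jump conditions (for shocks). Because $\lambda_2 = u$ is linearly degenerate, both $p$ and $u$ are constant across the contact, so the single pair $(p_*, u_*)$ in the Star Region is well defined and can be obtained by matching a left-moving nonlinear wave to a right-moving one through the contact.

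For Item 1, I would derive the ODE $du/dc = \pm 2/(\gamma-1)$ along the 1- and 3-rarefactions from the eigenvector structure (\ref{ee5}) combined with the isentropic relation, and integrate to obtain the Riemann invariants $u \pm 2c/(\gamma-1)$. Applying them across each rarefaction gives (\ref{eul:10}) and its right-symmetric version; inserting $c_{*K}/c_K = (p_*/p_K)^{(\gamma-1)/(2\gamma)}$ then produces the rarefaction branches of $f_L$ and $f_R$ in (\ref{ee8})--(\ref{ee9}). For Item 2, I would write the three Rankine-Hugoniot conditions across the left shock, use $e = p/[\rho(\gamma-1)]$ to reduce the energy jump to a Hugoniot pressure-density relation, and eliminate $\rho_{*L}$ and $S_L$ from the mass and momentum jumps to arrive at $u_* - u_L = -(p_* - p_L)\sqrt{A_L/(p_* + B_L)}$, i.e.\ (\ref{eul:12}); the right shock is symmetric. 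Item 3 is a by-product: starting from the mass RH relation $\rho_L(u_L - S_L) = \rho_{*L}(u_* - S_L)$ and the already-derived Hugoniot density ratio, one solves for $S_L$ and recovers (\ref{eul:13}) directly, with $q_L$ encoding the square-root factor.

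Items 4--6 are then book-keeping. Uniqueness of $u_*$ forces the two single-wave relations $u_* = u_L - f_L(p_*)$ and $u_* = u_R + f_R(p_*)$ to agree, which gives (\ref{ee7}); the piecewise definitions (\ref{ee8})--(\ref{ee9}) simply record, via the Lax entropy condition, whether the $K$-wave is a shock ($p_* > p_K$) or a rarefaction ($p_* \leq p_K$). Half-summing the two expressions for $u_*$ produces (\ref{ee11}). Finally, in the all-rarefaction case $f_L + f_R + \Delta u = 0$ is linear in the single unknown $z := p_*^{(\gamma-1)/(2\gamma)}$, and solving for $z$ and raising to the power $2\gamma/(\gamma-1)$ yields (\ref{eeAllRarefaction}) in closed form. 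The main obstacle, at the level of actual calculation, is the algebra of Item 2: cleanly eliminating $\rho_{*L}$ and $S_L$ from three Rankine-Hugoniot equations while carrying the asymmetry that produces the precise form of $A_L$ and $B_L$; everything else is a direct integration, substitution, or rearrangement.
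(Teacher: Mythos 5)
Your outline is correct and follows exactly the standard derivation that the paper defers to (the proof is omitted in the paper and referred to Chap.~4 of Toro's book): generalized Riemann invariants with the isentropic relation for the rarefaction branches, elimination of the intermediate density and shock speed from the Rankine--Hugoniot conditions for the shock branches, matching the two wave curves through the contact to get the pressure function, and the closed-form two-rarefaction solution via the substitution $z = p_*^{(\gamma-1)/(2\gamma)}$. No gaps; this is the same route.
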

\begin{proof}
 (omitted). See \cite{Toro:2009a} for details.
\end{proof}

\subsection{Further properties of the pressure function} 
\label{subsec:PropertiesPressureFunction}

Here we collect some particular properties of the pressure function $f$ in (\ref{ee7}). Fig. \ref{fig:EERPstrategy} shows that  $f_{ L}$ governs relations {\it across} the left non-linear wave and connects the  unknown particle speed $u_{*}$ to the known state ${\bf Q}_{ L}$ on the left side. Analogously,  $f_{ R}$ governs relations {\it across} the right wave and connects  $u_{*}$ to ${\bf Q}_{ R}$.  The form of $f_{ L}$ and $f_{ R}$ depends on whether the corresponding  non-linear wave is a shock or a rarefaction. For shocks one applies the Rankine-Hugoniot conditions  and for rarefactions one applies  generalised Riemann invariants. The functions $f_{ L}$ and $f_{ R}$ are called Lax curves, or simply wave curves.
\begin{figure}
         \centerline{
         \includegraphics[scale=0.8]{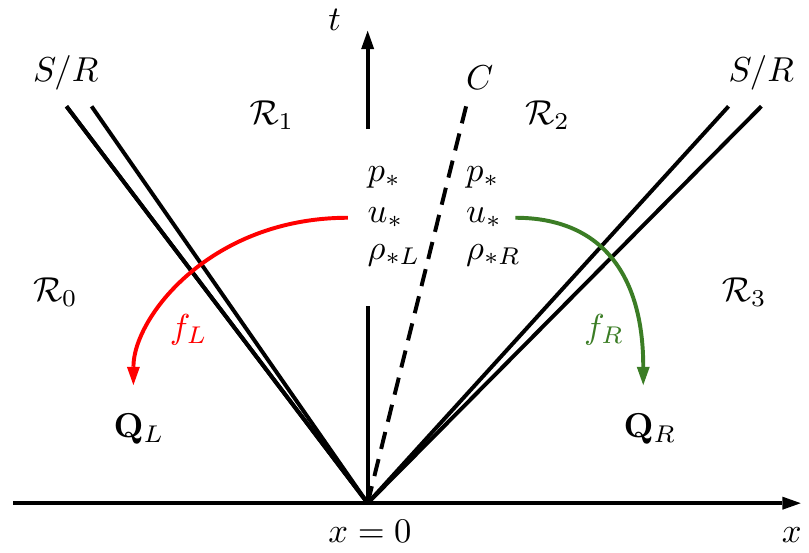}    
           }
        \caption{General wave configuration for the solution of the Riemann problem for the Euler equations. The {\it Star Region} is  connected to left and right data states via functions
        $f_{L}$ and $f_{R}$, respectively, see equations (\ref{ee8}) and (\ref{ee8}). Pressure and velocity are constant across the contact discontinuity, while density changes discontinuously.}
        \label{fig:EERPstrategy}
\end{figure}
Analysis of the pressure function $f(p)= f_{ L}+f_{ R}+\Delta u$ in (\ref{ee7}) reveals that,  for physically admissible data, the solution $p_{*}$ exists and is unique. It can be verified that $f(p)$ is monotone increasing and concave down, by calculating the first and second derivatives of $f_{ K}$ (K=L,R) with  respect to $p$.  It can also be seen that  $f'_{ K} \rightarrow 0 $ as $ p \rightarrow \infty $ and  $f''_{ K} \rightarrow 0 $ as $p \rightarrow \infty $. Fig. \ref{fig:EEPressureFunction} depicts $f(p)$ for fixed  initial data  for density and pressure $ \rho_{ L},~p_{ L} $ and $ \rho_{ R},~p_{ R} $ and for three cases of velocity difference $\Delta u=u_{ R}-u_{ L}$ denoted as $\Delta u_{1}$, $\Delta u_{2}$ and $\Delta u_{3}$. By decreasing $\Delta u$  from $\Delta u_{1}$ to $\Delta u_{3}$  $f(p)$ is shifted rightwards, giving rise to roots $p_{*1}$, $p_{*2}$ and $p_{*3}$ in increasing order.  $\Delta u$, $p_{ L}$ and $p_{ R}$ are the most important parameters for $f(p)$. See Chap. 4 in  \cite{Toro:2009a} for details.

With reference to  Fig. \ref{fig:EEPressureFunction} we define
\begin{equation}                        \label{ee12} 
      p_{ min}=\min\{p_{ L},p_{ R}\}  \;, \hspace{2mm} p_{ max}=\max\{p_{ L},p_{ R}\} \;,  \hspace{2mm}
      f_{ min}=f(p_{ min}) \;,  \hspace{2mm}  f_{ max}=f(p_{ max}) \;.              
\end{equation}   
For given $p_{ L}$, $p_{ R}$ it is the velocity difference $\Delta u $  which determines the value of $p_{*} $ and the specific wave pattern (one out of four) in Fig.  \ref{fig:EEFourPatterns}, namely
\begin{equation}                        \label{ee13} 
          \left.\begin{array}{llll}
           p_{*} \in I_{1} = (0,p_{ min})                     & \mbox{  if  }  & f_{ min} >0  \mbox{ and } f_{ max} >0          & (\mbox{R/C/R}) \;, \\
           p_{*} \in I_{2} = [p_{ min}, p_{ max}]   & \mbox{  if  }  & f_{ min} \leq 0  \mbox{ and } f_{ max} \geq0      & ( \mbox{R/C/S or S/C/R}) \;, \\
           p_{*} \in I_{3} = (p_{ max}, \infty)             & \mbox{  if  }  & f_{ min} <0  \mbox{ and } f_{ max} <0           & (\mbox{S/C/S}) \;.
          \end{array}\right\}
\end{equation}
The wave pattern can be identified {\it a priori} without solving the Riemann problem, by simply noting the signs of $f_{ min}$ and $f_{ max}$. For non-vacuum initial data ${\bf Q}_{ L}$, ${\bf Q}_{ R}$ there exists a unique positive solution $p_{*}$ for pressure,  provided $\Delta u$  satisfies the {\it pressure positivity condition} 
\begin{equation}                        \label{ee14}
      (\Delta u)_{ crit} \equiv \frac{2c_{ L}}{\gamma-1} + \frac{2c_{ R}}{\gamma-1} > u_{ R}-u_{ L}  \;.
\end{equation}
Otherwise, {\it vacuum}  is generated by the non-linear waves, usually very strong rarefactions. The structure of the solution in this case is different from that depicted in Fig. \ref{fig:EERPstrategy} and so  is the method of solution; for details see Chap. 4 of \cite{Toro:2009a}.
\begin{figure}
      \centerline{\includegraphics[scale=1.0]{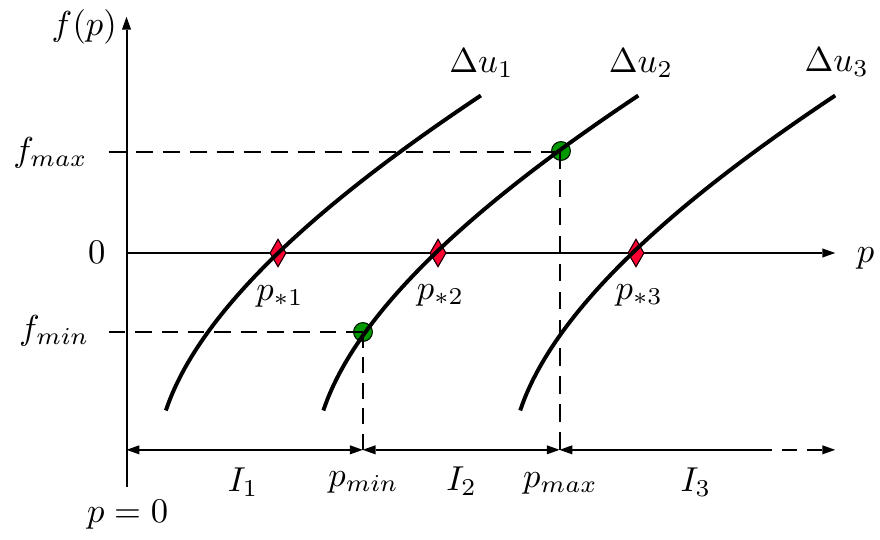}
      }
      \caption{Behaviour of the pressure function $f(p)$, see (\ref{ee7}), in the solution of the Riemann problem for the Euler equations assuming $\Delta u_{1}>\Delta u_{2}>\Delta u_{3}$.}
      \label{fig:EEPressureFunction}
\end{figure}

From Lemma \ref{eulRI}  we summarise the results for the minimal and maximal wave speeds emerging from the Riemann problem. For the (minimal) left wave speed we have
\begin{equation}                          \label{EEleftwave}
\begin{array}{c}
S_{L}^{Ex}= \left\{ 
       \begin{array}{lccc} 
       u_{L} - c_{L}   & \mbox{if} &  p_{*} \le p_{L}  & \mbox{left rarefaction} \;, \\
       \\
       u_{L} - c_{L}q_{L}  \;, \hspace{2mm} q_{L}=\sqrt{1+\frac{\gamma+1}{2 \gamma}(\frac{p_{*}}{p_{L}}-1) }  & \mbox{if} &  p_{*} > p_{L}  & \mbox{left shock} \;.
       \end{array}\right.
\end{array}
\end{equation}
For the (maximal) right wave speed we have
\begin{equation}                          \label{EErightwave}
\begin{array}{c}
S_{R}^{Ex}= \left\{ 
       \begin{array}{lccc} 
       u_{R} + c_{R}   & \mbox{if} &  p_{*} \le p_{R}  & \mbox{right rarefaction} \;, \\
       \\
       u_{R} + c_{R}q_{R} \;, \hspace{2mm}  q_{R}= \sqrt{1+\frac{\gamma+1}{2 \gamma}(\frac{p_{*}}{p_{R}}-1) }  & \mbox{if} &  p_{*} > p_{R}  & \mbox{right shock} \;.
       \end{array}\right.
\end{array}
\end{equation}
These exact wave speeds will be used to asses the correctnes and accuracy of proposed estimates for speed bounds.  The exact expressions will also be used to produce estimates based on approximations to (\ref{EEleftwave}) and (\ref{EErightwave}).

\section{Wave Speed Estimates for the Euler Equations}
\label{sec:boundsEulerequations}

\subsection{Definition of the problem}

The core subject of this paper is represented in Fig. \ref{fig:SpeedBounds} that describes the structure of the Riemann problem solution for a typical $3 \times 3$ hyperbolic system, such as the Euler equations. It is assumed that there are three wave families associated to three distinct eigenvalues $\lambda_{1}$, $\lambda_{2}$, $\lambda_{3}$. The problem is to find {\it a priori} estimates $S_{L}^{b}$ and $S_{R}^{b}$  for the minimal (left) and maximal (right) wave speeds present in the solution of the Riemann problem for general initial conditions ${\bf Q}_{L}$ and  ${\bf Q}_{R}$.  The determination of the true minimal and maximal wave speeds $S_{L}^{Ex}$ and $S_{R}^{Ex}$ obviously depends on the actual solution of the Riemann problem. Our aim is to find the bounds $S_{L}^{b}$ and $S_{R}^{b}$ in terms of the initial conditions but  using {\bf at best none} or {\bf at worse minimal} information from the actual solution of the Riemann problem.

There are at present several ways for estimating the wave speeds  $S_{L}^{Ex}$ and $S_{R}^{Ex}$,  but ways for estimating bounds $S_{L}^{b}$ and $S_{R}^{b}$  for these are lacking. To our knowledge, the work of Guermond and Popov \cite{Guermond:2016a} is the only one available on this subject. Next we review available methods to estimate $S_{L}$ and $S_{R}$.

\subsection{Existing approaches}

There are several approaches in the current literature to the estimation of minimal and maximal wave speeds, as briefly reviewed below.\\

\noindent{\bf Davis' estimates.} Davis \cite{Davis:1988a} is credited with the following estimates for $S_{L}$ and $S_{R}$ for the one-dimensional Euler equations. The simplest one relies on evaluation of the eigenvalues $\lambda_{1}=u-c$ and $\lambda_{3}=u+c$ on the data left and right respectively. That is
\begin{equation}           \label{Davis:a}
       S^{Dav_a}_{L}= u_{L} - c_{L} \;, \qquad S^{Dav_a}_{R}= u_{R} + c_{R} \;.
\end{equation}
These estimates are the simplest ones available, which unfortunately do not constitute bounds for the true wave speeds, in general.  We remark that there are some variations in the literature; for example the particle velocity is replaced by its absolute value. We note that (\ref{Davis:a}) is actually correct for the special case in which both non-linear waves are rarefaction waves. But even for this special case the applicability is limited, as one does not generally know in advance the particular wave pattern emerging from the solution of the Riemann  problem.

Another suggestion from Davis adds some complexity but represents an improvement, relative to (\ref{Davis:a}), namely
\begin{equation}           \label{Davis:b}
         S^{Dav_b}_{L}= \min\{u_{L} - c_{L} , u_{R} - c_{R}\} \;, \qquad S^{Dav_b}_{R}= \max\{u_{L} + c_{L} , u_{R} + c_{R}\}\;.
\end{equation}
These estimates again rely on evaluation of the eigenvalues  $\lambda_{1}=u-c$ and $\lambda_{3}=u+c$, but this time such evaluation is performed on each data states, for both eigenvalues. As we shall see later, as the result of our analysis, estimates (\ref{Davis:b}) are more successful than estimates (\ref{Davis:a}) but still fail to be bounds for all possible wave configurations. \\

\noindent{\bf Einfeldt's estimates.}  Einfeldt \cite{Einfeldt:1988a} proposed wave speed estimates by incorporating information on the Riemann problem solution, via the {\it Roe averages} in the Roe Riemann solver for the Euler equations. Einfeldt proposed
\begin{equation}
       S^{Einf}_L= \widetilde{u} - \widetilde{d} \;,  \qquad S^{Einf}_R= \widetilde{u} + \widetilde{d}   \;,
\end{equation}
where 
\begin{equation} \label{EQ:utilde}
      \widetilde{u}= \frac{\sqrt{\rho_L}u_L + \sqrt{\rho_R}u_R}{\sqrt{\rho_L} + \sqrt{\rho_R}} \qquad 
\end{equation}
and
\begin{equation} \label{EQ:eta2}
      {\widetilde{d^2}} =\frac{\sqrt{\rho_L}c^2_L + \sqrt{\rho_R}c^2_R}{\sqrt{\rho_L} + \sqrt{\rho_R}} +\frac{1}{2} \frac{\sqrt{\rho_L} \sqrt{\rho_R}}  
      {\left(\sqrt{\rho_L} + \sqrt{\rho_R}\right)^2} \left(u_R-u_L\right)^2 \;.
\end{equation}

\noindent{\bf Toro's estimates.}  Toro et al. \cite{Toro:1994c} proposed estimates on the exact expressions  (\ref{EEleftwave})-(\ref{EErightwave}), namely
\begin{equation}                          \label{Toroa}
    S_{L}^{To} = u_{L} - c_{L}q_{L}    \;, \hspace{3mm}  S_{R}^{To} = u_{R} - c_{R}q_{R} \;,
\end{equation}
but the functions $q_{L}$ and  $q_{R}$ contain approximate information from the solution of the Riemann problem and can distinguish between shocks and rarefaction waves. 
The most successful choice is
\begin{equation}                          \label{Torob}
\begin{array}{c}
q_{L}=\left\{
       \begin{array}{cccc} 
       1  & \mbox{if} &  p_{*rr} \le p_{L}  & \mbox{left rarefaction} \;, \\
       \\
        \sqrt{1+\frac{\gamma+1}{2 \gamma}(\frac{p_{*rr}}{p_{L}}-1) }  & \mbox{if} &  p_{*rr} > p_{L}  & \mbox{left shock} 
       \end{array}\right.
\end{array}
\end{equation}
and
\begin{equation}                          \label{Toroc}
\begin{array}{c}
q_{R}=\left\{
       \begin{array}{cccc} 
       1  & \mbox{if} &  p_{*rr} \le p_{R}  & \mbox{right rarefaction} \;, \\
       \\
        \sqrt{1+\frac{\gamma+1}{2 \gamma}(\frac{p_{*rr}}{p_{R}}-1) }  & \mbox{if} &  p_{*rr} > p_{R}  & \mbox{right shock} \;,
       \end{array}\right.
\end{array}
\end{equation}
where $p_{*rr}$ is the {\it two-rarefaction solution} given in (\ref{eeAllRarefaction}).\\

\noindent{\bf Corollary to  Guermond-Popov lemma.} It turns out that the wave speeds (\ref{Toroa})-(\ref{Toroc}) constitute a bound for the minimal and maximal wave speeds for the Euler equations. 
This follows from Lemma 4.2  of Guermond and Popov in \cite{Guermond:2016a} which states that $p_{*rr}$ is a bound for $p_{*}$, namely
\begin{equation}                          \label{Toroe}
       p_{*rr} \ge p_{*} \;.
\end{equation}
As a matter of fact in \cite{Toro:1994a} it was conjectured that $p_{*rr}$ in general was a bound for $p_{*}$ and numerical experiments actually confirmed this,  but a rigorous proof was lacking.  Moreover, it is easily seen that the shock branches  of the functions  $q_{L}$ and  $q_{R}$ are monotone increasing functions of their pressure argument.
It follows that (\ref{Toroa})-(\ref{Toroc}) provide bounds for the minimal and maximal wave speeds in the Euler equations.\\

\noindent{\bf Batten's estimates.}  Batten et al.  \cite{Batten:1997a} also suggested to use the Roe averages to obtain wave speed estimates for the Euler equations. They also applied the idea to the three-dimensional case. Batten et al. suggested
\begin{equation}                         \label{Battena}
     S^{Ba}_L= \min\{u_{L} - c_{L} , \widetilde{u} - \widetilde{c} \} \;, \qquad S^{Ba}_R= \max\{u_{R} + c_{R} , \widetilde{u} + \widetilde{c} \}  \;,
\end{equation}
where
\begin{equation}                         \label{Battenb}
     \widetilde{u}= \frac{\sqrt{\rho_L}u_L + \sqrt{\rho_R}u_R}{\sqrt{\rho_L} + \sqrt{\rho_R}} \;, \qquad 
     \widetilde{H}= \frac{\sqrt{\rho_L}H_L + \sqrt{\rho_R}H_R}{\sqrt{\rho_L} + \sqrt{\rho_R}} \;, \qquad 
     \widetilde{c}= \left[(\gamma -1)\left(\widetilde{H}-\frac{1}{2}\widetilde{u}^2\right)\right]^{1/2} \;.
\end{equation}
$H_{K}$ is the specific enthalpy on the left and right, namely
\begin{equation}                         \label{Battenc}
        H_{K} = \frac{E_{K}+p_{K}}{\rho_{K} } \quad(K=L,R) \;.
\end{equation}

As noted in \cite{Toro:1992a}, instead of the Roe averages, one could use the solution of the Riemann problem for the linearised system in which the coefficient matrix is a frozen matrix evaluated at the Roe averages. It would be reasonable to expect a pressure $p_{*b}$ that could be used in Toro's estimates (\ref{Toroa}). Unfortunately, as reported in \cite{Toro:1992a} through a counter example, the resulting solution for pressure is not a bound for the exact solution for $p_{*}$. Therefore, using such estimate for pressure in (\ref{Torob})-(\ref{Toroc}) would not result in a bound for the wave speeds (\ref{Toroa}).\\

\noindent{\bf Guermond-Popov estimates.} Recently, Guermond and Popov  \cite{Guermond:2016a} proposed estimates for the maximal wave speed for the Euler equations with covolume equation of state. Their estimates actually bound the maximal wave speed. This is probably the first work in which theoretical bounds are put forward. They proposed a direct bound (non-iterative) and a more accurate, iterative bound. Full details are found in \cite{Guermond:2016a}. The Guermond-Popov  \cite{Guermond:2016a} estimates will be denoted as $S^{GP}_L$ and $S^{GP}_R$. \\

In the next section we propose new direct estimates for bounds of the waves speeds emerging from the solution of the Riemann problem for the Euler equations.

\begin{figure}
      \centerline{\includegraphics[scale=0.8]{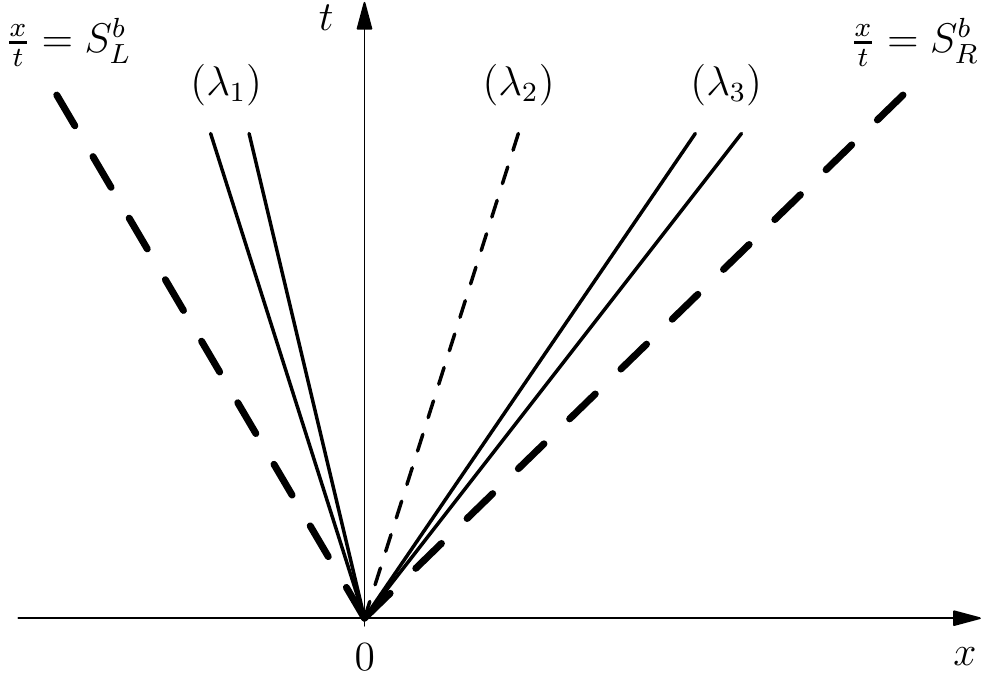}
      }
      \caption{Speed bounds $S_{L}^{b}$ and $S_{R}^{b}$ for the minimal and maximal wave speeds emerging from the solution of the Riemann problem for general initial conditions  ${\bf Q}_{L}$ and  ${\bf Q}_{R}$. }
      \label{fig:SpeedBounds}
\end{figure}

\section{Bounds for Minimal and Maximal Wave Speeds: the Euler Equations}

The main purpose of this section is to find theoretical estimates $S_{L}^{b}$ and $S_{R}^{b}$ that bound from below and from above the exact waves speed $S_{L}^{Ex}$ and $S_{R}^{Ex}$ given above, that is
 \begin{equation}                           \label{eul:16} 
 		S_{L}^{b} \le  S_{L}^{Ex} \hspace{4mm}   \mbox{and}  \hspace{4mm} S_{R}^{b} \ge  S_{R}^{Ex}  \;.
 \end{equation}

\subsection{Speed bound estimates: approach $TMS_a$}

In this approach the wave speed bounds are constructed on the bases of limited information from the solution of the Riemann problem and represents an improvement on the Toro's 
estimates (\ref{Toroa})-(\ref{Toroc}) \cite{Toro:1994c}. 

\begin{theorem} \label{eeTMSa} 

\noindent{\bf Method $TMS_a$ for the Euler equations.}  Given
\begin{equation}                                                 \label{eeTMSa1} 
p_{min} = \min\{p_{L}, p_{R} \}  \;,  \hspace{4mm} p_{max} = \max\{p_{L}, p_{R} \} \;
\end{equation}
and the three points
\begin{equation}                                                 \label{eeTMSa2} 
    P_{m}=(p_{min}, f(p_{min})) \;, \hspace{3mm} P_{M}=(p_{max}, f(p_{max})) \;,  \hspace{3mm} P_{rr}=(p_{*rr}, f(p_{*rr})) \;,
\end{equation}
where  $f(p)$ is the pressure function in (\ref{ee7}),  then the speed bounds $S_{L}^{b}$ and $S_{R}^{b}$ for the left and right waves respectively are given in the following four cases:

\begin{itemize}

\item{\bf Case R/R: two rarefaction waves.} This case is determined by the condition $f(p_{min})\ge 0$. The estimated wave speed bounds are actually exact and given as
\begin{equation}                                                \label{eeTMSa3} 
		 S_{L}^{b} = u_{L} -  c_{L} \;, \hspace{4mm} S_{R}^{b}  = u_{R} +  c_{R}  \;.
\end{equation}
\item{\bf Case R/S: left rarefaction/right shock}. This case is determined by the conditions $f(p_{min}) < 0$, $f(p_{max}) > 0$ and $p_{min} = p_{R}$, The value
\begin{equation}                                                \label{eeTMSa4} 
		  p_{*mM} = p_{min}-\left[\frac{p_{max}-p_{min} }{ f(p_{max})-f(p_{min})}\right]  f(p_{min})  \;
\end{equation}
is obtained from linear interpolation based on points $P_{m}$ and $P_{M}$, so that the estimated wave speed bounds are
\begin{equation}                                                \label{eeTMSa5} 
		 S_{L}^{b} = u_{L} -  c_{L} \;, \hspace{4mm} S_{R}^{b}  = u_{R} +  c_{R}q_{R}(p_{*mM})  \;, \hspace{4mm} q_{R}(\hat p) =\sqrt{1+\frac{\gamma+1}{2 \gamma}(\frac{\hat p}{p_{R}}-1) } \;,
\end{equation}
\item{\bf Case S/R: left shock/right rarefaction.} The conditions are: $f(p_{min}) < 0$, $f(p_{max}) > 0$ and $p_{min} = p_{L}$. The estimated wave speed bounds are 
\begin{equation}                                                \label{eeTMSa6} 
		 S_{L}^{b} = u_{L} -  c_{L}q_{L}(p_{*mM}) \;, \hspace{4mm} S_{R}^{b}  = u_{R} +  c_{R}  ;, \hspace{4mm} q_{L}(\hat p) =\sqrt{1+\frac{\gamma+1}{2 \gamma}(\frac{\hat p}{p_{L}}-1) } \;.
\end{equation}
\item{\bf Case S/S: two shock waves.} The condition is $f(p_{max}) < 0$. The estimated wave speed bounds are 
\begin{equation}                                                \label{eeTMSa7} 
		 S_{L}^{b} = u_{L} -  c_{L}q_{L}(p_{*Mrr}) \;, \hspace{4mm} S_{R}^{b}  = u_{R} +  c_{R} q_{R}(p_{*Mrr}) \;.
\end{equation}
where
\begin{equation}                                                \label{eeTMSa8} 
		  p_{*Mrr} = p_{max}-\left[\frac{p_{*rr}-p_{max} }{ f(p_{*rr})-f(p_{max})}\right]  f(p_{max}) \;
\end{equation}
is obtained from linear interpolation based on points $P_{M}$ and $P_{rr}$.

\end{itemize}
Table \ref{tab:TMSaBounds} summarises all cases.\\
\end{theorem}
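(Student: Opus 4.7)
The plan is to observe that in each of the four cases the bounding claim reduces, via the shock-rarefaction dichotomy of (\ref{EEleftwave})-(\ref{EErightwave}), to a claim that the proposed pressure estimate is an upper bound for the exact star pressure $p_*$. Indeed, on the rarefaction branches the wave speed is exactly $u_L-c_L$ or $u_R+c_R$ with no pressure input, while on the shock branches $q_K$ is a monotone increasing function of its pressure argument (read off from (\ref{eul:13})-(\ref{eul:15})); hence an upper bound for $p_*$ yields an upper bound for $q_K$, and therefore the desired sign in (\ref{eul:16}). The two analytic inputs I would rely on are already in the paper: the function $f(p)$ of (\ref{ee7}) is monotone increasing and concave down (Section \ref{subsec:PropertiesPressureFunction}), and the two-rarefaction pressure satisfies $p_{*rr} \geq p_*$ by the Guermond-Popov bound (\ref{Toroe}).

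Case R/R is immediate: $f(p_{\min}) \geq 0$ and monotonicity of $f$ force $p_* \leq p_{\min}$, so by the wave-pattern classification (\ref{ee13}) both non-linear waves are rarefactions and (\ref{eeTMSa3}) are the exact values of $S_L^{Ex}$ and $S_R^{Ex}$. For Cases R/S and S/R, the hypotheses $f(p_{\min})<0\leq f(p_{\max})$ place $p_*\in[p_{\min},p_{\max}]$. I would now invoke concavity: the chord joining $P_m$ and $P_M$ lies on or below the graph of $f$ throughout $[p_{\min},p_{\max}]$, so at $p=p_*$ the chord value is $\leq f(p_*)=0$; since the chord is linear and strictly increasing (its endpoints straddle zero), its unique root $p_{*mM}$ must satisfy $p_{*mM}\geq p_*$. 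The rarefaction side contributes an exact speed and the shock side contributes a bound via monotonicity of $q_K$, giving (\ref{eeTMSa5}) and (\ref{eeTMSa6}). Note that in each sub-case the shock argument of $q_K$ is evaluated at a pressure that does exceed the relevant data pressure, so one is genuinely on the shock branch where the monotonicity applies.

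Case S/S is the one that genuinely needs both inputs. Here $f(p_{\max})<0$ places $p_*>p_{\max}$, and the Guermond-Popov bound places $p_*\leq p_{*rr}$; monotonicity then gives $f(p_{*rr})\geq 0$. Concavity of $f$ on $[p_{\max},p_{*rr}]$ again forces the chord through $P_M$ and $P_{rr}$ below the graph of $f$, so its unique root $p_{*Mrr}$ is an upper bound for $p_*$. Since both $q_L$ and $q_R$ are then being evaluated on their shock branches at an argument no smaller than $p_*$, monotonicity of the shock branches closes the argument. The main subtlety to watch is exactly this reliance on the Guermond-Popov bound: without it, the chord used in the S/S interpolation has no guaranteed right endpoint above $p_*$ and the concavity-based straddling argument collapses. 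With that lemma in hand, everything else follows directly from the qualitative properties of $f$ already recorded in Section \ref{subsec:PropertiesPressureFunction}.
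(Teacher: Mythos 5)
Your proposal is correct and follows essentially the same route as the paper: the paper's own proof is a one-line appeal to the monotonicity and concavity of $f(p)$, the monotonicity of $q_{K}(p)$, and the Guermond--Popov bound $p_{*rr}\ge p_{*}$ of (\ref{Toroe}), which is exactly the machinery you deploy. Your chord-below-the-graph argument showing $p_{*mM}\ge p_{*}$ and $p_{*Mrr}\ge p_{*}$ is a correct and complete elaboration of the details the paper leaves implicit.
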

\begin{proof}
   The justification of the method follows from the properties of the functions $f(p)$, $q_{K}(p)$ ($K=L,R$) and Lemma 4.2 of Guermond and Popov  \cite{Guermond:2016a};  see (\ref{Toroe}).
\end{proof}
\begin{table}
\begin{center}
\begin{tabular}{|c|c|c|c|} \hline
Wave pattern   &   Conditions                                                                          &  $S_{L}^{b}$                                    &   $S_{R}^{b}$                                 \\ \hline
R/R                  &  $f(p_{min})\ge 0$                                                                 &  $u_{L} - c_{L}$                         &   $u_{R} + c_{R}$                          \\ \hline 
R/S                  &  $f(p_{min}) < 0$ \;, $f(p_{max}) > 0$ \;, $p_{min}=p_{R}$   &  $u_{L} -  c_{L}$                        &   $u_{R} +  c_{R}q_{R}(p_{*mM}) $ \\ \hline
S/R                  &  $f(p_{min}) < 0$ \;, $f(p_{max}) > 0$ \;, $p_{min}=p_{L}$   & $u_{L} -  c_{L}q_{R}(p_{*mM})$   &   $u_{R} +  c_{R}$                         \\ \hline
S/S                  &  $f(p_{max}) < 0$                                                                  &  $u_{L} -  c_{L}q_{L}(p_{*Mrr})$  &   $u_{R} + c_{R}q_{R}(p_{*Mrr})$   \\ \hline 
\end{tabular}     
\end{center}
\begin{center}
\caption{$TMS_a$ estimates for bounds $S_{L}^{b}$ and $S_{R}^{b}$ on minimal and maximal wave speeds. Function $q_{K}(p)$ ($K=L,R$) is given in (\ref{eul:13}) and (\ref{eul:15}).}\label{tab:TMSaBounds}  
\end{center}
\end{table}
%


\subsection{Speed bound estimates: approach $TMS_b$}

In this approach the wave speed bounds are constructed on the bases of limited information from the solution of the Riemann problem and is a simplified version of approach $TMS_a$ seen previously in Theorem \ref{eeTMSa}.

\begin{theorem} \label{eeTMSb} 

\noindent{\bf Method $TMS_b$ for the Euler equations.}  
Given
\begin{equation}                                                 \label{eeTMSb1} 
p_{min} = \min\{p_{L}, p_{R} \}  \;,  \hspace{4mm} p_{max} = \max\{p_{L}, p_{R} \} \;
\end{equation}
and the two points
\begin{equation}                                                 \label{eeTMSb2} 
    P_{m}=(p_{min}, f(p_{min})) \;, \hspace{3mm}   \hspace{3mm} P_{rr}=(p_{*rr}, f(p_{*rr})) \;,
\end{equation}
where  $f(p)$ is the pressure function in (\ref{ee7}),  then the speed bounds $S_{L}^{b}$ and $S_{R}^{b}$ for the left and right waves respectively given in the following four cases:

\begin{itemize}

\item{\bf Case R/R: two rarefaction waves.} This case is determined by the condition $f(p_{min})\ge 0$. The estimated wave speed bounds are actually exact and given as
\begin{equation}                                                \label{eeTMSb3} 
		 S_{L}^{b} = u_{L} -  c_{L} \;, \hspace{4mm} S_{R}^{b}  = u_{R} +  c_{R}  \;.
\end{equation}
\item{\bf Case R/S: left rarefaction and right shock. } The conditions are: $f(p_{min}) < 0$, $f(p_{max}) > 0$ and $p_{min}=p_{R}$. The estimated wave speed bounds are given as
\begin{equation}                                                \label{eeTMSb4} 
		 S_{L}^{b} = u_{L} -  c_{L} \;, \hspace{4mm} S_{R}^{b}  = u_{R} +  c_{R}q_{R}(p_{*mrr})  \;, \hspace{4mm} q_{R}(\hat p) =\sqrt{1+\frac{\gamma+1}{2 \gamma}(\frac{\hat p}{p_{R}}-1) } \;,
\end{equation}
where $p_{*mrr}$  is obtained from linear interpolation based on points $P_{m}$ and $P_{rr}$, namely  
\begin{equation}                                                \label{eeTMSb5} 
		  p_{*mrr} = p_{min}-\left[\frac{p_{*rr}-p_{min} }{ f(p_{*rr})-f(p_{min})}\right]  f(p_{min})  \;.
\end{equation}
\item{\bf Case S/R: left shock/right rarefaction. } The conditions are: $f(p_{min}) < 0$, $f(p_{max}) > 0$ and $p_{min}=p_{L}$. The estimated wave speed bounds are given as
\begin{equation}                                                \label{eeTMSb6} 
		 S_{L}^{b} = u_{L} -  c_{L}q_{L}(p_{*mrr}) \;, \hspace{4mm} S_{R}^{b}  = u_{R} +  c_{R}  ;, \hspace{4mm} q_{L}(\hat p) =\sqrt{1+\frac{\gamma+1}{2 \gamma}(\frac{\hat p}{p_{R}}-1) } \;.
\end{equation}
\item{\bf Case S/S: two shock waves.} The condition is: $f(p_{max}) < 0$. The estimated wave speed bounds are given as
\begin{equation}                                                \label{eeTMSb7} 
		 S_{L}^{b} = u_{L} -  c_{L}q_{L}(p_{*mrr}) \;, \hspace{4mm} S_{R}^{b}  = u_{R} +  c_{R} q_{R}(p_{*mrr}) \;.
\end{equation}
\end{itemize}
Table \ref{tab:TMSbBounds} summarises all cases.\\
\end{theorem}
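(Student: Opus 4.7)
The plan is to reduce all four cases to a single geometric fact: the linearly-interpolated pressure $p_{*mrr}$ defined by (\ref{eeTMSb5}) satisfies $p_{*mrr} \geq p_*$, where $p_*$ is the exact root of the pressure function $f$ in (\ref{ee7}). Once this inequality is established, the wave speed bounds follow by monotonicity of the shock coefficients $q_L$ and $q_R$, in exactly the same manner as in the proof of Theorem \ref{eeTMSa}.

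First I would dispatch the trivial R/R case: when $f(p_{min}) \geq 0$, the strict monotonicity of $f$ forces $p_* \leq p_{min}$, so both nonlinear waves are rarefactions and the exact wave speeds are the characteristic speeds $u_L - c_L$ and $u_R + c_R$ prescribed by (\ref{eeTMSb3}).

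For the remaining three cases I would exploit the structural properties of $f$ recalled in Section \ref{subsec:PropertiesPressureFunction}: $f$ is strictly increasing and strictly concave down. By the Guermond--Popov bound (\ref{Toroe}) one has $p_* \leq p_{*rr}$, and in each of the R/S, S/R and S/S cases a direct check of the hypotheses gives $p_{min} \leq p_* \leq p_{*rr}$ together with $f(p_{min}) < 0 < f(p_{*rr})$. The chord through $P_m$ and $P_{rr}$ is then a linear function negative at $p_{min}$ and positive at $p_{*rr}$, with unique zero $p_{*mrr}$ in $(p_{min}, p_{*rr})$. Strict concavity of $f$ places this chord strictly below the graph of $f$ on the open interval, so the chord value at $p_*$ is strictly less than $f(p_*) = 0$; linearity and monotonicity of the chord then yield $p_{*mrr} \geq p_*$, which is the key inequality. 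This is the only genuinely nontrivial step in the proof.

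The theorem then closes by invoking monotonicity of the shock coefficients. Formulas (\ref{eul:13}) and (\ref{eul:15}) exhibit $q_L$ and $q_R$ as square roots of affine increasing functions of the pressure argument, hence monotone increasing. In the R/S case this yields $S_R^{Ex} = u_R + c_R q_R(p_*) \leq u_R + c_R q_R(p_{*mrr}) = S_R^b$, while the left-wave equality $S_L^{Ex} = u_L - c_L = S_L^b$ is immediate from the rarefaction structure; the S/R case is symmetric. In the S/S case both inequalities follow the same way, the minus sign in the left-shock formula converting $q_L(p_*) \leq q_L(p_{*mrr})$ into $S_L^b \leq S_L^{Ex}$. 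Thus the comparisons (\ref{eul:16}) hold in every case, completing the proof.
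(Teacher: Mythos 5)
Your argument is correct and follows exactly the route the paper intends: the paper's own proof is only the remark that it is ``similar to that of Theorem~\ref{eeTMSa}'', whose justification rests on the monotonicity and concavity of $f$, the monotonicity of $q_{K}$, and the Guermond--Popov bound $p_{*rr}\ge p_{*}$ --- precisely the three ingredients you assemble, with the chord-below-the-graph step supplying the detail the paper leaves implicit. No gaps; this is a fully written-out version of the omitted proof.
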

\begin{proof}
  The proof is similar to that of  Theorem \ref{eeTMSa} . Details are omitted.
\end{proof}

\begin{table}
\begin{center}
\begin{tabular}{|c|c|c|c|} \hline
Wave pattern   &   Conditions                                                                           &  $S_{L}^{b}$                             &   $S_{R}^{b}$                                 \\ \hline
R/R                  &  $f(p_{min})\ge 0$                                                                 &  $u_{L} - c_{L}$                         &   $u_{R} + c_{R}$                          \\ \hline 
R/S                  &  $f(p_{min}) < 0$ \;, $f(p_{max}) > 0$ \;, $p_{min}=p_{R}$   &  $u_{L} - c_{L}$                        &   $u_{R} +  c_{R}q_{R}(p_{*mrr}) $ \\ \hline
S/R                  &  $f(p_{min}) < 0$ \;, $f(p_{max}) > 0$ \;, $p_{min}=p_{L}$   & $u_{L} -  c_{L}q_{R}(p_{*mrr})$   &   $u_{R} + c_{R}$                         \\ \hline
S/S                  &  $f(p_{max}) < 0$                                                                  &  $u_{L} - c_{L}q_{L}(p_{*mrr})$  &   $u_{R} + c_{R}q_{R}(p_{*mrr})$   \\ \hline 
\end{tabular}       
\end{center}
\begin{center}
\caption{$TMS_b$ estimates for bounds $S_{L}^{b}$ and $S_{R}^{b}$ on minimal and maximal wave speeds.}
\label{tab:TMSbBounds}
\end{center}
\end{table}
%


\subsection{Speed bound estimates: approach $TMS_c$}

This approach to estimate wave speed bounds is inspired by estimates (\ref{Toroa})-(\ref{Toroc}) \cite{Toro:1994c}. The estimates are constructed on the bases of limited information from the wave patterns present in the solution of the Riemann problem. We now identify all possible cases, along with the respective bound estimates:\\

\begin{theorem}

\noindent{\bf Method $TMS_c$ for the Euler equations.}   Given

\begin{equation}                                                 \label{eeTMSc1} 
p_{min} = \min\{p_{L}, p_{R} \}  \;,  \hspace{4mm} p_{max} = \max\{p_{L}, p_{R} \} \;,
\end{equation}
then the speed bounds $S_{L}^{b}$ and $S_{R}^{b}$ for the left and right waves respectively are given in the following four cases:

\begin{itemize}

\item {\bf Case R/R: two rarefaction waves.} This case is determined by the condition $f(p_{min})\ge 0$, where $f(p)$ is the pressure function in (\ref{ee7}). Then estimated wave speed bounds are actually exact and are
\begin{equation}                                                \label{eeTMSc2} 
		 S_{L}^{b} = u_{L} -  c_{L} \;, \hspace{4mm} S_{R}^{b}  = u_{R} +  c_{R}  \;.
\end{equation}
\item {\bf Case R/S: left rarefaction/right shock.} The conditions that identify this case are: $f(p_{min}) < 0$, $f(p_{max}) > 0$ and $p_{max}=p_{R}$. The estimated wave speed bounds are 
\begin{equation}                                                \label{eeTMSc3} 
		 S_{L}^{b} = u_{L} -  c_{L} \;, \hspace{4mm} S_{R}^{b}  = u_{R} +  c_{R}q_{R}(p_{L})  \;, \hspace{4mm} q_{R}(\hat p) =\sqrt{1+\frac{\gamma+1}{2 \gamma}(\frac{\hat p}{p_{R}}-1) } \;.
\end{equation}
\item {\bf Case S/R: left shock/right rarefaction.} The conditions that identify this case are: $f(p_{min}) < 0$,  $f(p_{max}) > 0$ and $p_{max}=p_{R}$. The estimated wave speed bounds are 
\begin{equation}                                                \label{eeTMSc4} 
		 S_{L}^{b} = u_{L} -  c_{L}q_{L}(p_{R}) \;, \hspace{4mm} S_{R}^{b}  = u_{R} +  c_{R}  ;, \hspace{4mm} q_{L}(\hat p) =\sqrt{1+\frac{\gamma+1}{2 \gamma}(\frac{\hat p}{p_{L}}-1) } \;.
\end{equation}
\item{\bf Case S/S: left shock/right  shock.} The conditions that identify this case is: $f(p_{max}) < 0$ (shock/shock). The estimated wave speed bounds are 
\begin{equation}                                                \label{eeTMSc5} 
		 S_{L}^{b} = u_{L} -  c_{L}q_{L}(p_{*rr}) \;, \hspace{4mm} S_{R}^{b}  = u_{R} +  c_{R} q_{R}(p_{*rr}) \;,
\end{equation}
where 
\begin{equation}                                               \label{eeTMSc6}      
       p_{*rr}=\left[\frac{c_{ L}+c_{ R}-\frac{1}{2}(\gamma-1)(u_{ R}-u_{ L})}
      {c_{ L}/p_{ L}^{\frac{\gamma-1}{2\gamma}}
      +c_{ R}/p_{ R}^{\frac{\gamma-1}{2\gamma}}}
      \right]^{\frac{2\gamma}{\gamma-1}} \;.
\end{equation}
\end{itemize}
Table \ref{tab:TMScBounds} summarises all cases.\\
\end{theorem}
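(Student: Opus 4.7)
The plan is to mirror the strategy already used for Theorems \ref{eeTMSa} and \ref{eeTMSb}, based on three ingredients: (i) the exact wave-speed formulas (\ref{EEleftwave})--(\ref{EErightwave}); (ii) the monotonicity of the shock-branch functions $q_L(p)$ and $q_R(p)$ defined in (\ref{eul:13})--(\ref{eul:15}), which are manifestly increasing in $p$ on $[p_L,\infty)$ and $[p_R,\infty)$ respectively; and (iii) the Guermond--Popov bound $p_{*rr}\ge p_*$ recalled in (\ref{Toroe}). In addition I would use the wave-pattern classification (\ref{ee13}), which identifies the pattern purely from the signs of $f(p_{min})$ and $f(p_{max})$, together with the pairing of the side (L or R) carrying $p_{max}$ with the side that carries the shock in the mixed cases.

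First I would dispose of Case R/R. Under $f(p_{min})\ge 0$ one has $p_*\le p_{min}$, so both nonlinear waves are rarefactions and the head speeds $u_L-c_L$ and $u_R+c_R$ are exact by (\ref{EEleftwave})--(\ref{EErightwave}), so (\ref{eeTMSc2}) trivially satisfies (\ref{eul:16}) with equality. Next I would handle Case R/S: the hypotheses $f(p_{min})<0$, $f(p_{max})>0$ together with $p_{max}=p_L$ (the natural reading consistent with the R/S configuration) force $p_R< p_*\le p_L$, so the left estimate $u_L-c_L$ is exact and the right side is a shock with $q_R(p_*)$ the relevant factor. Since $p_L\ge p_*$ and $q_R$ is increasing, $q_R(p_L)\ge q_R(p_*)$, hence $S_R^b=u_R+c_R q_R(p_L)\ge u_R+c_R q_R(p_*)=S_R^{Ex}$, which is the required inequality. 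Case S/R is entirely symmetric: $p_L< p_*\le p_R$, so $u_R+c_R$ is exact on the right, and $q_L(p_R)\ge q_L(p_*)$ yields $S_L^b=u_L-c_L q_L(p_R)\le u_L-c_L q_L(p_*)=S_L^{Ex}$, noting that the minus sign reverses the direction of the inequality.

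For Case S/S I would argue as follows. The condition $f(p_{max})<0$ places $p_*$ in the interval $I_3=(p_{max},\infty)$ of (\ref{ee13}), so both nonlinear waves are shocks and both $q_L(p_*)$ and $q_R(p_*)$ are the exact multipliers in (\ref{EEleftwave})--(\ref{EErightwave}). The Guermond--Popov bound (\ref{Toroe}) gives $p_{*rr}\ge p_*$, and the monotonicity of $q_L$ and $q_R$ then yields $q_L(p_{*rr})\ge q_L(p_*)$ and $q_R(p_{*rr})\ge q_R(p_*)$. These immediately translate into $S_L^b\le S_L^{Ex}$ and $S_R^b\ge S_R^{Ex}$ via (\ref{eeTMSc5}). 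The explicit closed form (\ref{eeTMSc6}) for $p_{*rr}$ comes directly from item~6 of Lemma~\ref{eulRI}.

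The only real obstacle is bookkeeping: one must be careful that in each mixed case the sign convention on the minus sign in the left-speed formula reverses the direction of the required monotonicity inequality, and that the chosen surrogate pressure ($p_L$, $p_R$, or $p_{*rr}$) actually dominates $p_*$ so that the monotonicity of $q_K$ gives an upper (or lower) bound in the correct sense. Once the four cases are correctly aligned with the ordering of $p_L$, $p_R$, $p_*$, and $p_{*rr}$, the proof reduces to a one-line monotonicity argument per side per case.
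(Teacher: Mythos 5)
Your proposal is correct and follows essentially the same route as the paper's own (one-line) proof: monotonicity of the shock-branch functions $q_{L}$, $q_{R}$ in each case, supplemented by the Guermond--Popov bound $p_{*rr}\ge p_{*}$ for the S/S configuration, with the wave-pattern classification (\ref{ee13}) supplying the ordering $p_{min}\le p_{*}\le p_{max}$ in the mixed cases. You also correctly identified that the condition for the R/S case must read $p_{max}=p_{L}$ (equivalently $p_{min}=p_{R}$, as in Table \ref{tab:TMScBounds}) rather than the $p_{max}=p_{R}$ appearing in the theorem text, which is needed for the surrogate pressure $p_{L}$ to dominate $p_{*}$.
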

\begin{proof}
         The proof is based on the monotone increasing character of the functions $q_{L}(\hat p)$  and $q_{R}(\hat p)$. 
\end{proof}

\begin{table}
\begin{center}
\begin{tabular}{|c|c|c|c|} \hline
Wave pattern       &   Conditions                                                                           &   $S_{L}^{b}$                         &   $S_{R}^{b}$  \\ \hline
$R/R$                  &  $f(p_{min})\ge 0$                                                                  &  $u_{L} - c_{L}$                     &   $u_{R} + c_{R}  $  \\ \hline 
$R/S$                  &  $f(p_{min}) < 0$ \;, $f(p_{max}) > 0$ \;, $p_{min}=p_{R}$    &  $u_{L} -  c_{L}$                    &  $ u_{R} +  c_{R}q_{R}(p_{L}) $ \\ \hline
$S/R$                  &  $f(p_{min}) < 0$ \;, $f(p_{max}) > 0$ \;, $p_{min}=p_{L}$    &  $ u_{L} -  c_{L}q_{L}(p_{R}) $  & $  u_{R} +  c_{R} $ \\ \hline
$S/S$                  &  $f(p_{max}) < 0$                                                                   &  $  u_{L} -  c_{L}q_{L}(p_{*rr})   $           &$   u_{R} + c_{R}q_{R}(p_{*rr})  $  \\ \hline 
\end{tabular}       
\caption{$TMS_c$ estimates for bounds $S_{L}^{b}$ and $S_{R}^{b}$ on minimal and maximal wave speeds. Function $q_{K}(p)$ ($K=L, R$) given by 
Eqs.  (\ref{eeTMSb4})-(\ref{eeTMSb5}). }
\label{tab:TMScBounds}
\end{center}
\end{table}

\noindent{\bf Remarks:} 
\begin{itemize}

\item As seen so far, for the Euler equations our approach $TMS_c$ uses the Toro's speed bound estimate for one of the four cases, namely the S/S wave pattern. We have found it difficult to derive
simple conditions for the S/S case exclusively in terms of the initial data.

\item For the shallow water equations and the blood flow equations for arterial flow, however, it is possible to construct such simple conditions in terms  of the intial data. Therefore, no 
explicit use of the Riemann problem solution is necessary in the $TMS_c$ when applied to these equations, as will be seen in Sects. 5 and 6.

\end{itemize}

\subsection{Numerical tests for the Euler equations}

In this section we perform some numerical experiments to test all the wave speed estimates considered in this paper, existing ones and the newly proposed ones.
To this end we considered seven Riemann problems. Table \ref{Tab:Euler_Tests_IC} shows the initial conditions in terms of primitive variables. The chosen tests cover all possible wave patterns, which are displayed in the last column. The exact solution for pressure $p_{*}$ and velocity $u_{*}$ in the {\it Start Region} are also displayed. The ratio of specific heats is $\gamma$ = 1.4.
\begin{table}[h!]    
\begin{center}
\begin{tabular}{|c|c|c|c|c|c|c|c|c|c|} \hline
Test & $\rho_{L}$ & $u_{L}$  & $p_{L}$ & $\rho_{R}$ & $u_{R}$  & $p_{R}$ & $u_*$& $p_*$ & wave pattern  \\ \hline
1    &  1.0       & 0.0      & 1.0     &  1.0       &  0.0     & 0.1 &  0.5248    & 0.5219 & rar-shock     \\ \hline
2    &  1.0       & 0.0      & 1.0     &  0.125     &  0.0     & 0.1 &  0.9274 & 0.3031   & rar-shock \\ \hline
3    &  1.0       & 0.0      & 1.0     &  0.001     &  0.0     & 0.8 &  0.1794 & 0.8060    & rar-shock \\ \hline
4    &  1.0       & 0.0      & 0.01    &  1.0       &  0.0     & 1000.0 &  -19.5975 & 460.8938   & shock-rar \\ \hline
5    &  6.0       & 8.0      & 460.0   & 6.0        & -6.0     & 46.0&  3.8194  & 790.2928   & shock-shock \\ \hline 
6    &  600.0     & 80.0     & 4600.0  & 6.0        & -6.0     & 46.0&  44992.5781  & 790.2928   & shock-shock \\ \hline 
7    &  1.0       & -2.0     & 0.4     &  1.0       &  2.0     & 0.4 & 0.0000 &  0.0019    & rar-rar \\ \hline
\end{tabular}     
\caption{Initial conditions for seven  Riemann problems for the Euler equations with $\gamma = 1.4$. The exact solution for pressure $p_{*}$ and velocity $u_{*}$ in the {\it Start Region} is displayed in the $8th$ and  $9th$ columns respectively. Last column shows the emerging wave patterns. Units: SI.}
\label{Tab:Euler_Tests_IC}     
\end{center}
\end{table}

Tables \ref{Tab:Euler_Tests_SR} and \ref{Tab:Euler_Tests_SL} show the results from all seven test problems.  Comparison is made between  all wave speed estimates considered in this paper and the exact solution. Table \ref{Tab:Euler_Tests_SR} shows results for the maximal wave speed, while Table \ref{Tab:Euler_Tests_SL} shows results for the minimal wave speed. In each case the second column shows the exact solution for the respective wave speed. Columns 3 to 8 show existing estimates, while colums 9 to 11 show the new estimates proposed in this paper. All three new estimates are seen to confirm that they constitute bounds for the maximal and miminimal wave speeds. Of the existing estimates, only those of Toro ($S^{To}_{R}$, $S^{To}_{L}$) and Guermond-Popov ($S^{GP}_{R}$, $S^{GP}_{L}$) constitute bounds, while all the remaining ones are seen to fail, that is to say they do not constitute bounds for the maximal and minimal wave speeds for all cases.\\

Bounding the extreme waves is the primary objective, but accuracy is also important. It is seen that  estimates of the type $To$, $GP$, $TMS_a$, $TMS_b$, $TMS_c$ are generally accurate. Our estimates $TMS_a$ and $TMS_b$ are the most accuracte for  very strong shocks, see results for Test 6, even though the error is still large for this test problem. The simple $TMS_c$ estimates are less accurate than those of the type $TMS_a$ and $TMS_b$, in the presence of strong shocks; note also that the present $TMS_b$ estimate is more accurate than the existing $To$ and $GP$ type estimates.

\begin{table}[h!]
\scriptsize
\begin{center}
\begin{tabular}{|c|c|c|c|c|c|c|c|c|c|c|} \hline
Test & $S^{Ex}_{R}$  & $S^{Dav_a}_{R}$ & $S^{Dav_b}_{R}$   & $S^{To}_{R}$ & $S^{GP}_{R}$  & $S^{Batten}_R$ &  $S^{Einf}_{R}$  & ${S}^{TMS_a}_{R}$  & ${S}^{TMS_b}_{R}$ & ${S}^{TMS_c}_{R}$ \\ \hline
1  & 0.8039  & \textcolor{red}{\bf 0.3742} &          1.1832 &         0.8134 &         0.8134 &         0.8775 &         0.8775 &         0.9296 &         0.8080 &   1.1045 \\ \hline
2  & 1.7522 & \textcolor{red}{\bf 1.0583} &   \textcolor{red}{\bf 1.1832} &         1.7621 &         1.7621 &  \textcolor{red}{\bf 1.1519} &         \textcolor{red}{\bf 1.1519} &         2.1761 &         1.7554 &   3.1241 \\ \hline
3  & 33.5742 & \textcolor{red}{\bf 33.4664} &        \textcolor{red}{\bf 33.4664} &        33.5742 &        33.5742 &        \textcolor{red}{\bf 33.4664} &         \textcolor{red}{\bf 5.9740} &        33.5849 &        33.5743 &  36.8782 \\ \hline
4  & 37.4166 & 37.4166 &        37.4166 &        37.4166 &        37.4166 &        37.4166 &        \textcolor{red}{\bf 26.4576} &        37.4166 &        37.4166 &  37.4166 \\ \hline
5  & 6.6330 & \textcolor{red}{\bf -2.7238} &        18.3602 &         7.5400 &         7.5400 &         9.2966 &        10.1397 &         6.7847 &         7.1170 &   7.5400 \\ \hline
6  & 88.8686 & \textcolor{red}{\bf -2.7238} &        \textcolor{red}{\bf 83.2762} &       716.2437 &       716.2437 &        \textcolor{red}{\bf 83.7136} &        89.9681 &       219.3651 &       262.0067 & 716.2437 \\ \hline
7  & 2.7483 & 2.7483 &   2.7483 &   2.7483 &   2.7483 &   2.7483 &   \textcolor{red}{\bf 1.6000}  &   2.7483 &   2.7483 &   2.7483 \\ \hline
\end{tabular}
\caption{Results for maximal wave speed $S_R$ . Column 2 displays the exact solution. Existing estimates are shown in columns
3 to 8, while colums 9 to 11 show the new estimates proposed in the present paper. A value in red indicates that the speed estimate
fails to be a bound for the exact solution.}
\label{Tab:Euler_Tests_SR}            
\end{center}
\end{table}
\begin{table}[h!]
\scriptsize
\begin{center}
\begin{tabular}{|c|c|c|c|c|c|c|c|c|c|c|} \hline
Test & $S^{Ex}_{L}$  & $S^{Dav_a}_{L}$ & $S^{Dav_b}_{L}$   & $S^{To}_{L}$ & $S^{GP}_{L}$ & $S^{Batten}_L$ &  $S^{Einf}_{L}$   & ${S}^{TMS_a}_{L}$  & ${S}^{TMS_b}_{L}$ & ${S}^{TMS_c}_{L}$ \\ \hline
1  & -1.1832  &  -1.1832 &        -1.1832 &        -1.1832 &        -1.1832 &        -1.1832 &        \textcolor{red}{\bf -0.8775} &        -1.1832 &        -1.1832 &  -1.1832 \\ \hline
2  & -1.1832 & -1.1832 &        -1.1832 &        -1.1832 &        -1.1832 &        -1.1832 &        \textcolor{red}{\bf -1.1519} &        -1.1832 &        -1.1832 &  -1.1832 \\ \hline
3  & -1.1832 & -1.1832 &       -33.4664 &        -1.1832 &        -1.1832 &        -5.9740 &        -5.9740 &        -1.1832 &        -1.1832 &  -1.1832 \\  \hline
4  & - 23.5175 & \textcolor{red}{\bf -0.1183} &       -37.4166 &       -33.0899 &       -33.0899 &       -26.4576 &       -26.4576 &       -31.7392 &       -33.0886 & -34.6410 \\ \hline
5  & -5.1678  & \textcolor{red}{\bf -2.3602} &        -9.2762 &        -6.0404 &        -6.0404 &        -7.2966 &        -8.1397 &        -5.3135 &        -5.6329 &  -6.0404 \\ \hline
6  & 70.4335  & \textcolor{red}{\bf 76.7238} &        -9.2762 &         7.7651 &         7.7651 &        60.6501 &        54.3955 &        57.4298 &        53.1710 &   7.7651 \\ \hline
7  & -2.7483 & -2.7483 &  -2.7483 &  -2.7483 &  -2.7483 &  -2.7483 &  \textcolor{red}{\bf - 1.6000} &  -2.7483 &  -2.7483 &  -2.7483 \\ \hline
\end{tabular}
\caption{Results for maximal wave speed $S_L$ . Column 2 displays the exact solution. Existing estimates are shown in culumns
3 to 8, while colums 9 to 11 show the new estimates proposed in the present paper. A value in red indicates that the speed estimate
fails to be a bound for the exact solution.}
\label{Tab:Euler_Tests_SL}            
\end{center}
\end{table}

Three new approaches to estimate wave speed bounds for the Euler equations have been proposed. These have been designated as $TMS_a$, $TMS_b$ and $TMS_c$. In the next two sections we apply these approaches to the shallow water equations and to the blood flow equations. For such systems we simplify further approach $TMS_c$  and introduce an even simpler approach called $TMS_d$. Aproaches $TMS_c$ and $TMS_d$ are indeed very simple, but not necessarily accurate, as we shall see in sections \ref{sec:boundsshallowwater} and \ref{sec:boundsbloodflow}.

\section{Speed Bounds: Shallow Water Equations}
\label{sec:boundsshallowwater}

In this section we propose bounds for minimal and maximal wave speeds emerging from the solution of the Riemann problem for for the shallow water equations. For background see \cite{Toro:2001a}. 

\subsection{Equations and wave relations}

Here we review the one-dimensional shallow water equations (SWEs) for the case of a horizontal bottom and state the relations that are used in the next sections to obtain wave speed bounds. Full details on the model adopted can be found in \cite{Toro:2001a} and references therein. The system of governing equations
is
\begin{equation}                                        \label{swe:cons}
	     \partial_{t}{\bf Q} +  \partial_{x} {\bf F}({\bf Q})={\bf 0} \;, 
\end{equation} 
with the vector of conserved variables ${\bf Q}(x,t)=\left[h, h u\right]^{T}$  and the flux vector ${\bf F(Q)}= \left[h u, h u^2 + \frac{1}{2}g h^{2}\right]^{T}$. Moreover, $x \in \mathbb{R}$ and $t \in [0,+\infty)$ are the space and time coordinates, $u\in\mathbb{R}$ is the fluid velocity, $h \in (0,+\infty)$ is the flow depth and $g$ is a parameter of the problem and represents the acceleration due to gravity, taken here as $g=9.8 m/s^{2}$.
The eigenvalues of the Jacobian associated to flux vector $\bf F(Q)$ in (\ref{swe:cons}) are all real and given by 
\begin{equation}                                        \label{swe3}
      \lambda_{1}=u-c \;, \hspace{3mm}  \;,  \hspace{3mm} \lambda_{2}=u+c \;,  
\end{equation}
where $c=\sqrt{gh}$ is the {\it celerity}. 

The Riemann problem is formulated as in (\ref{ee6}). For the SWEs considered here there are only two wave families emerging from the initial discontinuity. The corresponding two waves separate three constant states, namely ${\bf Q}_{L}$, ${\bf Q}_{*}$ and ${\bf Q}_{R}$.  The state ${\bf Q}_{*}$ is unknown.

Next we present four lemmas that will be used for proving that our proposed wave speed estimates are bounds for the minimal and maximal wave speeds.

\begin{lemma} \label{sweWaveRelations} 
\noindent{\bf Exact Riemann solution and wave speeds}  

\begin{enumerate}

\item {\bf Wave jumps across rarefactions: } across the left rarefaction  the left Riemann invariant gives
\begin{equation}                         \label{swe:10}
      u_{*} + 2c_{*} = u_{L} + 2c_{L}  \;,
\end{equation}
and across the right rarefaction we have
\begin{equation}                         \label{swe:11}
      u_{*} - 2c_{*} = u_{R} - 2c_{R}  \;.
\end{equation}
\item {\bf Wave jumps across shocks: } for a left shock, the Rankine-Hugoniot conditions give
\begin{equation}                          \label{swe:12} 
		u_{*} = u_{L} - f_{L}  \;; \hspace{3mm}
		f_{L}  = (h_{*}-h_{L})\sqrt{\frac{1}{2} g \left(\frac{h_{*}+h_{L}}{h_{*}h_{L}}\right)} \;,
\end{equation}
while for  a right shock one has

\begin{equation}                          \label{swe:14} 
		u_{*} = u_{R} + f_{R}  \;; \hspace{3mm}
		f_{R}  = (h_{*}-h_{R})\sqrt{\frac{1}{2} g \left(\frac{h_{*}+h_{R}}{h_{*}h_{R}}\right)} \;.
\end{equation}

\item {\bf Shock speeds: } the speed for a left shock is given as
\begin{equation}                          \label{swe:13}
S_{L} = u_{L} - c_{L} q_{L} \;, \hspace{3mm} 
       q_{L} = \sqrt{\frac{1}{2}\left(y^2+y\right)} \;,  \hspace{3mm} y= \frac{h_{*}}{h_{L}} \;.
\end{equation}
For a right shock the speed is
\begin{equation}                           \label{swe:15}
       S_{R} = u_{R} + c_{R} q_{R} \;, \hspace{3mm} 
       q_{R} = \sqrt{\frac{1}{2}\left(y^2+y\right)} \;,  \hspace{3mm} y= \frac{h_{*}}{h_{R}} \;.
\end{equation}
\item {{\bf Solution for flow depth} $h_*$: } solving the Riemann problem for system (\ref{swe:cons}) requires solving a nonlinear algebraic equation $f(h)=0$, with root $h_*$, with the function $f$ given as
\begin{equation}   \label{swe:starprob}
    f(h;h_L,h_R) = f_L(h;h_L) + f_R(h; h_R) + u_R - u_L\,,
\end{equation}
where $(h_L,u_L)$ and $(h_R,u_R)$ are the left/right states for the Riemann problem and
\begin{equation}  \label{swe:branches}
    f_K(h;h_K) = \left\{ \begin{array}{lllll} 
         f^{RAR}_K &= 2 (c(h)-c(h_K))\;, & \mbox{if } & h \le h_K    &  (rarefaction)\,,\\
         f^{SHO}_K &= \sqrt{\frac{1}{2} g \frac{(h + h_K)(h-h_{K})^2}{h h_K}}\;, & \mbox{if } & h > h_K  &  (shock) \;, 
    \end{array}\right.
\end{equation}
for $K=L$ or $K=R$.

\item {{\bf Two-rarefaction solution for flow depth} $h_*rr$.}  When both  branches $f_L$ and $f_R$ in (\ref{swe:branches}) are those of rarefaction waves one has
\begin{equation} \label{swe:twoRarFunc}
    f^{RAR}(h;h_L,h_R) = f^{RAR}_L(h;h_L) + f^{RAR}_R(h; h_R) + u_R - u_L\,,
\end{equation}
for which the closed-form solution of $f^{RAR}(h;h_L,h_R)=0$ is
\begin{equation}\label{swe:atr}
 h_{*rr} = \frac{1}{g}\left[\frac{1}{2}(c_L+c_R)+\frac{1}{4}(u_L-u_R)\right]^2\;.
\end{equation}

\end{enumerate}
\end{lemma}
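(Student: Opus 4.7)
The proof proceeds piece by piece through the five claims, all of which are classical results about the Riemann problem for (\ref{swe:cons}); the strategy mirrors that used for the Euler system in Lemma \ref{eulRI}. The plan is to first extract the rarefaction invariants from the characteristic structure, then derive the shock branches from the Rankine--Hugoniot conditions, and finally glue the two sides together through the intermediate constant state $(h_*, u_*)$, which yields both the star-problem equation and the explicit two-rarefaction formula.

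For items 1 and 5 I would begin by rewriting (\ref{swe:cons}) in primitive variables $(h,u)$ so that the quasilinear form has Jacobian with eigenvalues $u \mp c$ and right eigenvectors proportional to $(h, \mp c)^T$. Since each characteristic field is genuinely nonlinear, across a $k$-rarefaction a generalized Riemann invariant $\psi$ satisfying $\nabla_{(h,u)} \psi \cdot r_k = 0$ is constant. A direct integration of $h\, d\psi/dh \mp c\, d\psi/du = 0$ with $c=\sqrt{gh}$ gives $\psi_{1} = u + 2c$ (constant across a 1-rarefaction) and $\psi_{2} = u - 2c$ (constant across a 2-rarefaction), which are exactly (\ref{swe:10})--(\ref{swe:11}). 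The two-rarefaction depth (\ref{swe:atr}) then follows by simultaneously solving the two linear equations
\begin{equation}
u_* + 2c_* = u_L + 2c_L, \qquad u_* - 2c_* = u_R - 2c_R,
\end{equation}
for $c_*$ and substituting $h_{*rr} = c_*^2/g$.

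For items 2 and 3 I would apply the Rankine--Hugoniot jump conditions to the left shock travelling at speed $S_L$, namely
\begin{equation}
S_L(h_* - h_L) = h_* u_* - h_L u_L, \qquad S_L(h_* u_* - h_L u_L) = h_* u_*^2 - h_L u_L^2 + \tfrac{1}{2} g (h_*^2 - h_L^2).
\end{equation}
Passing to the shock frame via $\hat u_K = u_K - S_L$ makes the mass flux $m = h_L \hat u_L = h_* \hat u_*$ and, after eliminating $m$ between the two jump conditions, produces the closed identity $(u_* - u_L)^2 = \tfrac{g}{2}(h_* - h_L)^2 (h_*+h_L)/(h_* h_L)$. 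Taking the physically admissible sign (entropy condition forces $h_* > h_L$ on the compressive side and $u_* < u_L$ for a left-moving shock) yields (\ref{swe:12}). The shock-speed formula (\ref{swe:13}) then drops out by reinserting the mass-flux identity $S_L = u_L - m/h_L$ and rewriting $m^2 = \tfrac{1}{2} g h_L h_*(h_L+h_*)$ as $c_L^2 q_L^2$ with $y = h_*/h_L$. The right-shock case is entirely symmetric.

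For item 4 the relations (\ref{swe:10}), (\ref{swe:12}) yield $u_* = u_L - f_L(h_*;h_L)$ with $f_L$ defined piecewise as in (\ref{swe:branches}) (rarefaction branch from inverting $u_* + 2c_* = u_L + 2c_L$, shock branch from the Rankine--Hugoniot derivation above), and analogously $u_* = u_R + f_R(h_*;h_R)$. Equating the two expressions for $u_*$ and rearranging gives the scalar equation (\ref{swe:starprob}). The main technical point that requires care is verifying the sign conventions so that each branch matches continuously at $h = h_K$ and so that $f_L + f_R$ is monotone increasing in $h$; this is the shallow-water analogue of the property established in Section \ref{subsec:PropertiesPressureFunction} and is what guarantees a unique positive root under the analogue of the pressure-positivity condition (\ref{ee14}). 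I expect this monotonicity/sign bookkeeping to be the only delicate part; the rest reduces to the algebra sketched above.
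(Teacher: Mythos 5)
Your derivation is correct and is essentially the standard one: the paper itself omits the proof of this lemma and defers to Toro (2001), where the argument proceeds exactly as you outline --- generalized Riemann invariants integrated along the eigenvectors for the rarefaction branches and the two-rarefaction depth, Rankine--Hugoniot conditions in the shock frame for the shock branches and speeds, and matching the two wave curves at $(h_*,u_*)$ to obtain the depth function $f(h)=0$. The only blemish is notational: the quantity that equals $c_L^2q_L^2$ is $(m/h_L)^2$ rather than $m^2$, i.e. $m^2=\tfrac{1}{2}gh_Lh_*(h_L+h_*)$ must be divided by $h_L^2$ before substituting $y=h_*/h_L$ to read off $q_L=\sqrt{\tfrac{1}{2}(y^2+y)}$.
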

\noindent{\bf Remark:} Obvioulsy  for the case in which both waves are rarefactions (\ref{swe:atr}) is the exact solution of (\ref{swe:starprob}), while for other cases it becomes an approximation, and hence the common name of {\it two-rarefaction approximation}.
\begin{proof}
Omitted. Full details are found in \citep{Toro:2001a}.
\end{proof}
\begin{lemma} \label{swe:theorar}
 $f(h;h_L,h_R) \ge f^{RAR}(h;h_L,h_R) \quad \forall h>0\,.$
\end{lemma}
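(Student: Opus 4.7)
The plan is to reduce the lemma to a branch-by-branch comparison of the left and right wave curves, and then to verify a single algebraic polynomial inequality. Since the difference
\[
f(h;h_L,h_R) - f^{RAR}(h;h_L,h_R) = \bigl(f_L(h;h_L) - f^{RAR}_L(h;h_L)\bigr) + \bigl(f_R(h;h_R) - f^{RAR}_R(h;h_R)\bigr)
\]
(the $u_R - u_L$ contributions cancel), it suffices to prove that for each $K \in \{L,R\}$ and each $h>0$,
\[
f_K(h;h_K) \ge f^{RAR}_K(h;h_K).
\]

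For $h \le h_K$ the two functions coincide by the very definition (\ref{swe:branches}), so equality holds trivially. The substantive case is $h > h_K$, where $f_K = f^{SHO}_K$ while $f^{RAR}_K$ must be interpreted as the analytic continuation of the rarefaction formula to the shock side. After introducing the dimensionless variables $z = \sqrt{h/h_K}$ and $c_K = \sqrt{g h_K}$, the shock branch becomes $f^{SHO}_K = c_K\sqrt{(z^2+1)(z-1)^2(z+1)^2/(2z^2)}$ and the rarefaction branch becomes $f^{RAR}_K = 2c_K(z-1)$. Since both sides are non-negative for $z > 1$, dividing through by $c_K$ and squaring reduces the claim to the polynomial inequality
\[
(z^2+1)(z+1)^2 \ge 8z^2, \qquad z > 0.
\]

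The next step is to rewrite this as $P(z) := z^4 + 2z^3 - 6z^2 + 2z + 1 \ge 0$ and to identify $z=1$ as a double root — this matches the expected second-order tangency between shock and rarefaction Hugoniot loci at a reference state. Polynomial division then yields the factorisation
\[
P(z) = (z-1)^2\,(z^2 + 4z + 1),
\]
and since $z^2+4z+1>0$ for all $z>0$, we conclude $P(z)\ge 0$, with equality iff $z=1$, i.e., $h=h_K$. Adding the two branch inequalities gives the lemma.

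The main obstacle I anticipate is not conceptual but algebraic: guessing (or justifying) that $z=1$ is a \emph{double} root so that the residual quadratic $z^2+4z+1$ is trivially positive. This is what allows a short, closed-form argument rather than calculus-based monotonicity estimates; its underlying reason is the well-known $C^2$-tangency of the shock and rarefaction wave curves at a given state, which one could alternatively invoke directly at the cost of a less self-contained proof.
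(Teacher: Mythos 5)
Your proposal is correct and follows essentially the same route as the paper's proof: reduce to the branch-wise inequality $f^{SHO}_K \ge f^{RAR}_K$, substitute a dimensionless ratio, square on the shock side, and verify the resulting polynomial inequality by factorisation. Your factorisation $(z-1)^2(z^2+4z+1)$ with $z=\sqrt{h/h_K}$ is exactly the paper's $(\sqrt{y}-1)^4(1+4\sqrt{y}+y)$ after cancelling the $(z-1)^2$ factor you removed before squaring, so the two arguments coincide up to this change of variables.
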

\begin{proof}
The approach followed here is similar to the one proposed in \cite{Guermond:2015a}.
 First note that it is sufficient to prove that $f^{SHO}_K \geq f^{RAR}_K$. In fact, fixing $f_K=f^{RAR}_K$ with $f^{SHO}_K \geq f^{RAR}_K$ for either of the two $f_K$ in (\ref{swe:starprob}) fulfills the statement to be proved.
 We thus concentrate in proving
 \begin{equation}
     \left( \frac{1}{2} g \frac{(h+h_K)(h-h_K)^2}{h h_K} \right)^{\frac{1}{2}} \geq 2 (c(h)-c(h_K)) \,. \label{eq:toprove_SW}
 \end{equation}
 Before proceeding we introduce $y=h/h_K$ and note that 
 \begin{equation}
     2 (c(h)-c(h_K)) = 2 \left( g h_K \right)^{\frac{1}{2}}   (y^{\frac{1}{2}} -1)\,. \label{eq:rarSimp_SW}
 \end{equation}
 Moreover,
 \begin{equation} \label{eq:shockSimp_SW}
      \left( \frac{1}{2}  g \frac{(h+h_K)(h-h_K)^2}{h h_K} \right)^{\frac{1}{2}} = \left( \frac{1}{2}  g  \frac{h_K(y+1) h^2_K(y-1)^2}{h h_K} \right)^{\frac{1}{2}}=
     \left( g h_K \right)^{\frac{1}{2}} \left( \frac{1}{2} \frac{(y+1) (y-1)^2}{y} \right)^{\frac{1}{2}}\,.
 \end{equation}
 By replacing (\ref{eq:rarSimp_SW}) and (\ref{eq:shockSimp_SW}) in (\ref{eq:toprove_SW}) we obtain
 \begin{equation}  \label{swe:inequality}
       \left( \frac{1}{2} \frac{(y+1) (y-1)^2}{y} \right)^{\frac{1}{2}} \geq 2    (y^{\frac{1}{2}} -1)\,
 \end{equation}{}
which proves the claim for $y\leq 1$, since the left hand side is always positive while the right hand side is  negative. \\

Next, for $y>1$, we square both sides of inequality (\ref{swe:inequality}), that is
 \begin{equation}
       \frac{1}{2} \left( \frac{(y+1) (y-1)^2}{y} \right) \geq 4    (y^{\frac{1}{2}} -1)^2\,.
 \end{equation}{}
or
 \begin{equation}
          (y+1) (y-1)^2  \geq 8 y    (y^{\frac{1}{2}} -1)^2\, \label{eq:toprove2_SW} \;.
 \end{equation}
  We now rearrange the above expression as
 \begin{equation}
     (y+1) (y-1)^2  - 8 y    (y^{\frac{1}{2}} -1)^2 \geq 0\,,
 \end{equation}{}
 and factor it as
\begin{equation} \label{swe:ineq}
     (\sqrt{y} - 1)^4 (1 + 4 \sqrt{y} + y) \geq 0\,,    
 \end{equation}{}
which holds true for $y>0$ since all terms are greater or equal to zero for $y>0$. This concludes the proof.
\end{proof}

\begin{lemma} \label{swe:theoshockspeed}
The function $q_K$ in (\ref{swe:13}) and (\ref{swe:15}), with $K=\{L,R\}$, is monotone increasing in $h\;,\forall h>0\,.$
\end{lemma}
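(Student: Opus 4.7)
The plan is to reduce the claim to a one-variable monotonicity statement and verify it by an elementary derivative computation. Since $h_K>0$ is a fixed constant, the variable $y = h/h_K$ is a strictly increasing linear function of $h$ on $(0,\infty)$. By the chain rule (or composition of monotone functions), it suffices to show that the mapping
\begin{equation*}
    y \mapsto Q(y) := \sqrt{\tfrac{1}{2}(y^2+y)}
\end{equation*}
is strictly increasing for $y>0$, because then $q_K(h) = Q(h/h_K)$ is the composition of two strictly increasing functions.

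For the one-variable step, I would note that the radicand $\varphi(y) := \tfrac{1}{2}(y^2+y)$ satisfies $\varphi'(y) = y + \tfrac{1}{2} > 0$ for all $y > 0$, so $\varphi$ is strictly increasing on $(0,\infty)$ and strictly positive there. Since the square root is strictly increasing on $(0,\infty)$, the composition $Q = \sqrt{\varphi}$ is strictly increasing on $(0,\infty)$. Equivalently, one can compute
\begin{equation*}
    Q'(y) \;=\; \frac{2y+1}{4\sqrt{\tfrac{1}{2}(y^2+y)}} \;>\; 0 \qquad \text{for } y>0,
\end{equation*}
which gives the same conclusion directly.

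Putting the two steps together, $q_K(h) = Q(h/h_K)$ is strictly increasing in $h$ on $(0,\infty)$, as required. I do not anticipate any real obstacle: the entire argument is a routine chain-rule verification, the only point to be careful about being that both factors of the radicand ($y^2+y$ and $y+\tfrac12$) must be shown positive on the relevant domain $y>0$, which is immediate. No appeal to the Riemann problem structure or to Lemma \ref{swe:theorar} is needed; this result is purely a statement about the algebraic form of the Rankine--Hugoniot shock-speed factor.
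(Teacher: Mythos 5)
Your proof is correct and matches the paper's approach, which simply declares the monotonicity immediate from $y>0$; you have merely filled in the routine derivative computation that the authors omit. The chain-rule decomposition $q_K(h)=Q(h/h_K)$ with $Q(y)=\sqrt{\tfrac{1}{2}(y^2+y)}$ and the positivity of $Q'(y)$ for $y>0$ are exactly the intended (and only) content of the lemma.
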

\begin{proof}
Since $y>0$ by definition, the proof follows immediately.
 \end{proof}
 
\begin{lemma} \label{swe:theoConcaveDown}
 The function (\ref{swe:starprob}) is concave down $\forall h>0\,.$
\end{lemma}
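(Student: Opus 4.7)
The plan is to reduce the concavity statement to separate claims about the two wave branches and then verify that the piecewise function glues concavely at $h=h_K$. Since $f(h;h_L,h_R)=f_L(h;h_L)+f_R(h;h_R)+(u_R-u_L)$, the constant $u_R-u_L$ contributes nothing to $d^2 f/dh^2$, and concavity is additive, so it suffices to prove that each $f_K$ (for $K=L,R$) is concave down on $(0,\infty)$.

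Fix $K$ and work on the rarefaction branch $0<h\le h_K$ first. Here $f^{RAR}_K(h)=2\sqrt{g}(\sqrt{h}-\sqrt{h_K})$, so a direct computation gives
\begin{equation}
\frac{d^2 f^{RAR}_K}{dh^2}=-\tfrac{1}{2}\sqrt{g}\,h^{-3/2}<0,
\end{equation}
which is the easy part. Next, on the shock branch $h>h_K$, I would introduce the dimensionless variable $y=h/h_K$ and factor out a positive constant, writing
\begin{equation}
f^{SHO}_K(h)=\sqrt{\tfrac{g h_K}{2}}\,\frac{(y-1)\sqrt{y+1}}{\sqrt{y}},
\end{equation}
so that the sign of $d^2 f^{SHO}_K/dh^2$ equals the sign of $\psi''(y)$ where $\psi(y)=(y-1)\sqrt{y+1}/\sqrt{y}$. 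A direct (if tedious) differentiation and common-denominator manipulation reduces $\psi''(y)$ to a rational expression with a strictly positive denominator and a numerator that can be shown to be negative for all $y>0$ (after factoring out obvious positive terms, one is left with checking that a polynomial in $\sqrt{y}$ has a definite sign, in the same spirit as inequality (\ref{swe:ineq}) in the proof of Lemma~\ref{swe:theorar}). This is the main obstacle: the algebra is heavier than on the rarefaction side, and a clean factorization analogous to $(\sqrt{y}-1)^4(1+4\sqrt{y}+y)$ is what I would aim for.

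Finally, I would stitch the two branches together at $h=h_K$. Both branches vanish there, and a quick evaluation shows that $(f^{RAR}_K)'(h_K)=\sqrt{g/h_K}=(f^{SHO}_K)'(h_K)$, so $f_K$ is $C^1$ across the junction. Combined with concavity on each open piece, the characterization of concavity via non-increasing first derivative then extends across $h_K$: the slope $f_K'(h)$ is continuous and non-increasing on each of $(0,h_K]$ and $[h_K,\infty)$, hence non-increasing on all of $(0,\infty)$. Summing over $K=L,R$ gives the claim for $f(h;h_L,h_R)$. The whole argument mirrors the well-known pressure-function analysis for the Euler equations recalled in Section~\ref{subsec:PropertiesPressureFunction}, and the only nontrivial step is the second-derivative sign check on the shock branch.
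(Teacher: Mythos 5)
Your reduction is the right one, and it mirrors the strategy the paper actually carries out in full for the analogous blood-flow statement (Lemma \ref{bfe:theoConcaveDown}); for the present lemma the paper's own proof is simply omitted with a pointer to the shallow-water literature. The additivity of concavity, the sign of $(f^{RAR}_K)''$, the rescaling of the shock branch to $\psi(y)=(y-1)\sqrt{y+1}/\sqrt{y}$, and the $C^1$ gluing at $h=h_K$ (both one-sided derivatives there equal $\sqrt{g/h_K}$, so the slope is continuous and non-increasing across the junction) are all correct. The gap is that the only nontrivial claim --- $\psi''(y)<0$ on the shock branch --- is announced as something you ``would aim for'' rather than established; as written, the proof is incomplete at precisely the step you yourself identify as the main obstacle.

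The step does close, and more cleanly than by brute-force differentiation of $\psi$: work with $\psi^2$ instead. Since $\psi^{2}=(y-1)^{2}(y+1)/y=y^{2}-y-1+y^{-1}$, differentiating twice gives $2\psi\psi'=2y-1-y^{-2}$ and $2(\psi')^{2}+2\psi\psi''=2+2y^{-3}$; eliminating $\psi'$ and clearing denominators yields
\begin{equation}
4y^{4}\psi^{3}\psi''=y^{4}\left[4\psi^{2}\bigl(1+y^{-3}\bigr)-\bigl(2y-1-y^{-2}\bigr)^{2}\right]=-5y^{4}+12y^{3}-6y^{2}-4y+3=-(y-1)^{3}(5y+3)\,.
\end{equation}
For $y>1$ one has $\psi>0$, hence $\psi''=-\dfrac{(y-1)^{3}(5y+3)}{4y^{4}\psi^{3}}<0$, which is exactly the missing inequality (note the natural factor is a power of $y-1$, not of $\sqrt{y}-1$ as in (\ref{swe:ineq})). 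With this computation inserted, your argument becomes a complete, self-contained proof in the same spirit as the explicit second-derivative factorization the paper performs for the blood-flow shock branch.
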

\begin{proof}
Omitted. Full details can be found in \citep{Toro:2001a}.
\end{proof}

In the rest of this section we propose estimates for speed bounds of the shallow water system. We propose four new estimates for wave speed bounds, which we call: $TMS_a$, $TMS_b$, $TMS_c$ and $TMS_d$. We now proceed to formulate each one of these estimates and to prove our statements. Before proceeding with the definition of wave speed estimates, we first define some quantities that will be used repeatedly, namely
\begin{equation}
 h_{min}=\min\{h_L,h_R\}\;, \hspace{4mm} h_{max} = \max\{h_L,h_R\}\;
\end{equation}
and 
\begin{equation}
 f_{min}=f(h_{min})\;, \hspace{4mm} f_{max} = f(h_{max})\;.
\end{equation}
We also define the following three points:
\begin{equation}                                                 \label{sweTMSc4} 
    P_{m}=(h_{min}, f(h_{min})) \;, \hspace{3mm} P_{M}=(h_{max}, f(h_{max})) \;,  \hspace{3mm} P_{rr}=(h_{*rr}, f(h_{*rr})) \;.
\end{equation}
As for the Euler equations, these points will be selectively used for linear interpolation to obtain an approximation of the flow depth $h_{*}$.

\subsection{SWE speed bound estimate: approach $TMS_a$}

This estimate requires up to three evaluations of function (\ref{swe:starprob}). Table \ref{tab:sweTMSa} summarizes the results presented here.

\begin{theorem} \label{swe:theoTMSb}
 \noindent{\bf Method $TMS_a$ for SWE:} The exact left and right 
wave speeds $S_{L}^{Ex}$ and $S_{R}^{Ex}$ are bounded by $S_{L}^{b}$ and $S_{R}^{b}$, computed as follows:

\begin{itemize}
\item{\bf Case R/R: two rarefaction waves.} In this case we have that  $f_{min}\geq 0$. Then we set
 \begin{equation} \label{swe:tsmcRR}
  S_{L}^{b} = S^{Ex}_L = u_L-c_L\;, \hspace{5mm} S_{R}^{b} = S_{R}^{Ex} = u_R+c_R\;.
 \end{equation}
 \item{\bf Case R/S: left rarefaction and right shock.} The conditions are:  $f_{min}<0$, $f_{max}>0$ and $h_{max}=h_L$. Then one computes
     \begin{equation}\label{swe:tsmcSRLin}
        h_{*mM} = h_{min} - \frac{h_{max}-h_{min}}{f_{max}-f_{min}} f_{min}\;
    \end{equation}
    and then set
     \begin{equation}\label{swe:tsmcRS}
        S_{L}^{b} = u_L- c_L \;, \hspace{5mm} S_{R}^{b} = u_R + c_R q_{R}(h_{*mM})\;.
     \end{equation}
\item{\bf Case S/R: left shock and right rarefaction.} The conditions are: $f_{min}<0$, $f_{max}>0$ and $h_{max}=h_L$. Then we set
     \begin{equation}\label{swe:tsmcRS}
        S_{L}^{b} = u_L- c_Lq_{L}(h_{*mM})\;, \hspace{5mm} S_{R}^{b} = u_R + c_R \;.
     \end{equation}
\item{\bf Case S/S: two shock waves.} If $f_{max}\leq0$ one first computes
 \begin{equation}\label{swe:tsmcSSLin}
        h_{*Mrr} = h_{max} -  \frac{h_{*rr}-h_{max}}{f(h_{*rr})-f_{max}} f_{max}\;
    \end{equation}
 and then sets
 \begin{equation}\label{swe:tsmcSS}
  S_{L}^{b} = u_L - c_L q_{L}(h_{*Mrr})\;, \hspace{5mm} S_{R}^{b} = u_R + c_R q_{R}(h_{*Mrr})\;.
 \end{equation}
\end{itemize}
\end{theorem}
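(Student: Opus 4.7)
The plan is to exploit three structural facts, two of them already in hand: the function $f$ is concave down (Lemma \ref{swe:theoConcaveDown}), the shock-factor $q_K$ is monotone increasing in $h$ (Lemma \ref{swe:theoshockspeed}), and $f(h;h_L,h_R)\ge f^{RAR}(h;h_L,h_R)$ (Lemma \ref{swe:theorar}). As a zeroth step I would first observe that, because both branches of (\ref{swe:branches}) are monotone increasing in $h$, the full function $f$ is monotone increasing. Together with the sign data of $f_{min}$ and $f_{max}$, this locates the root $h_\ast$ relative to $h_{min}$ and $h_{max}$, which in turn classifies the wave pattern and matches exactly the four case conditions stated in the theorem.

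Cases R/R, R/S and S/R are then handled in sequence. For R/R, monotonicity gives $h_\ast\le h_{min}\le h_L,h_R$, so both non-linear waves are rarefactions and the extremal signal speeds are the rarefaction heads $u_L-c_L$ and $u_R+c_R$; the stated bounds are in fact equalities. For R/S and S/R the root lies in $[h_{min},h_{max}]$. Here the key step is: by concavity $f$ lies above the chord joining $P_m$ and $P_M$ on this interval, so at the chord's zero $h_{\ast mM}$ one has $f(h_{\ast mM})\ge 0$, and monotonicity of $f$ then yields $h_\ast\le h_{\ast mM}$. Feeding this inequality into the monotone $q_R$ (resp. $q_L$) upgrades it to $c_R q_R(h_\ast)\le c_R q_R(h_{\ast mM})$, which is precisely the needed bound on the shock side; the rarefaction side is again exact for the same reason as in R/R.

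For Case S/S the argument chains two upper bounds on $h_\ast$. First, Lemma \ref{swe:theorar} at $h=h_{\ast rr}$ gives $f(h_{\ast rr})\ge f^{RAR}(h_{\ast rr})=0$, and combined with $f(h_\ast)=0$ and monotonicity of $f$ this yields $h_\ast\le h_{\ast rr}$. Second, on the interval $[h_{max},h_{\ast rr}]$ the values $f_{max}\le 0\le f(h_{\ast rr})$ straddle zero, so repeating the concavity-above-chord argument between $P_M$ and $P_{rr}$ gives $h_\ast\le h_{\ast Mrr}$. Applying the monotonicity of $q_L$ and $q_R$ then transfers this inequality into the two required speed bounds $S_L^b\le S_L^{Ex}$ and $S_R^b\ge S_R^{Ex}$ simultaneously.

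The main obstacle I anticipate is getting the direction of the concavity argument unambiguously right: one must verify that concave-down with opposite-sign endpoints forces the chord's zero to lie to the \emph{right} of $h_\ast$ rather than to the left, since only this direction, combined with the monotonicity of $q_K$, yields bounds of the correct sign. A secondary technicality is the degenerate case $h_L=h_R$, where the interpolation interval collapses and the wave pattern is trivially R/R, and the edge cases $f_{min}=0$ or $f_{max}=0$ where $h_\ast$ coincides with an endpoint and the chord reduces to a point; both are handled by a direct check rather than by the generic argument.
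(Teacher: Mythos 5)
Your proposal is correct and follows essentially the same route as the paper: the same three ingredients (concavity of $f$, monotonicity of $q_K$, and $h_{*rr}\ge h_*$ via Lemma \ref{swe:theorar}) combined with the chord-above-root argument to show the interpolated depths overestimate $h_*$. In fact you spell out the concavity-plus-monotonicity reasoning that the paper's proof only cites implicitly, and your direction check (chord zero lies to the right of $h_*$) is exactly the point that makes the argument work.
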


\begin{proof}

We consider the four possible wave configurations separately.
\paragraph{$TMS_a$ - Proof for case of left rarefaction/right rarefaction (R/R)}
 
Wave speed estimates (\ref{swe:tsmcRR}) are identical to the exact wave speeds for this wave configuration. 

\paragraph{$TMS_a$ - Proof for case of left rarefaction/right shock (R/S)}
 
Wave speed estimates in (\ref{swe:tsmcRS}) for $S^b_L$ is identical to the exact wave speed for this wave type.
Wave speed estimate in (\ref{swe:tsmcRS}) for $S^b_R$ is computed with the same expression used for the exact wave speed for this wave type, see (\ref{swe:15}) , but using $h_{*mM}$ instead of $h_*$. The resulting estimate is a bound since for this wave configuration we have that:  (\ref{swe:15}) is monotone increasing in $h$ (from Lemma \ref{swe:theoshockspeed}) and $f(h;h_L,h_R)$ is concave down (from Lemma \ref{swe:theoConcaveDown}).
 
\paragraph{$TMS_a$ - Proof for case of left shock/right rarefaction (S/R)}
 
Proving that expressions in (\ref{swe:tsmcRS}) are bounds is entire analogous to the previous case. 
 
\paragraph{$TMS_a$ - Proof for case of left shock/right shock (S/S)}
 
Wave speed estimates in (\ref{swe:tsmcRS}) are computed with the same expression used for the exact wave speeds for this wave type, see (\ref{swe:15}) , but using $h_{*mrr}$ instead of $h_*$. The resulting estimates are bounds since for this wave configuration we have that:  $h_{*rr}>h_*$ (from Lemma \ref{swe:theorar}), (\ref{swe:15}) is monotone increasing in $h$ (from Lemma \ref{swe:theoshockspeed}) and $f(h;h_L,h_R)$ is concave down (from Lemma \ref{swe:theoConcaveDown}).
 \end{proof}
\begin{table}
\begin{center}
\begin{tabular}{|c|c|c|c|} \hline
Wave pattern   &   Conditions       &    $S_{L}^{b}$     &   $S_{R}^{b}$  \\ \hline
$R/R$   &  $f(h_{min})\ge 0$     &    $u_{L} - c_{L}$                    &   $u_{R} + c_{R}  $  \\ \hline 
$R/S$   &  $f(h_{min}) < 0$ \;, $f(h_{max}) > 0$ \;, $h_{min}=h_{R}$ &  $u_{L} -  c_{L}$  &  $ u_{R} +  c_{R}q_{R}(h_{*mM}) $ \\ \hline
$S/R$ &  $f(h_{min}) < 0$ \;, $f(h_{max}) > 0$ \;, $h_{min}=h_{L}$    &  $ u_{L} -  c_{L}q_{L}(h_{*mM}) $  & $  u_{R} +  c_{R} $ \\ \hline
$S/S$  &  $f(h_{max}) < 0$  &   $  u_{L} -  c_{L}q_{L}(h_{*Mrr})   $           &$   u_{R} + c_{R}q_{R}(h_{*Mrr})  $  \\ \hline 
\end{tabular}       
\caption{$TMS_a$ bound estimates $S_{L}^{b}$ and $S_{R}^{b}$ on minimal and maximal wave speeds for the blood flow equations (\ref{bfe:cons}). Function $q_{K}(h)$ ($K=L, R$) given in Eqs. (\ref{swe:13})-(\ref{swe:15}). Value $h_{*mM}$ is given in Eq. (\ref{swe:tsmcSRLin}). Value  $h_{*Mrr}$ is given in Eq. (\ref{swe:tsmcSSLin}).}\label{tab:sweTMSa}
\end{center}
\end{table}

\subsection{SWE speed bound estimate: approach $TMS_b$}

This estimate is similar to $TMS_a$. It requires up to three evaluations of function (\ref{swe:starprob}). Table \ref{tab:sweTMSb} summarizes the results presented here.

\begin{theorem} \label{swe:theoTMSc}
 
\noindent{\bf Method $TMS_b$ for SWE:} The exact left and right 
wave speeds $S_{L}^{Ex}$ and $S_{R}^{Ex}$ are bounded by $S_{L}^{b}$ and $S_{R}^{b}$, computed as follows:
\begin{itemize}
    \item{\bf Case R/R: two rarefaction waves.} In this case we have that  $f_{min}\geq0$. Then we set
    \begin{equation} \label{swe:tsmdRR}
    S_{L}^{b} = S^{Ex}_L = u_L-c_L\;, \hspace{5mm} S_{R}^{b} = S_{R}^{Ex} = u_R+c_R\;.
    \end{equation}

\item{\bf Case R/S: left rarefaction and right shock.} $f_{min}<0$, $f_{max}>0$ and $A_{max}=A_L$. Then we compute
    \begin{equation}\label{swe:tsmdLin}
        h_{*mrr} = h_{min} -  \frac{h_{*rr}-h_{min}}{f(h_{*rr};h_L,h_R)-f_{min}} f_{min}\;.
    \end{equation}
    Then we set
    \begin{equation}\label{swe:tsmdRS}
        S_{L}^{b} = u_L- c_L\;, \hspace{5mm} S_{R}^{b} = u_R + c_R q_{R}(h_{*mrr})\;.
    \end{equation}
    \item{\bf Case S/R: left shock and right rarefaction.} $f_{min}<0$, $f_{max}>0$ and $h_{max}=h_R$. Then we set
    
    \begin{equation}\label{swe:tsmdSR}
    S_{L}^{b} = u_L - c_L q_{L}(h_{*mrr}) \;, \hspace{5mm} S_{R}^{b} = u_R + c_R\;.
    \end{equation}
\item{\bf Case S/S: two shock waves.} If $f_{max}\leq0$ we set
 \begin{equation}\label{swe:tsmcSS}
  S_{L}^{b} = u_L - c_L q_{L}(h_{*mrr})\;, \hspace{5mm} S_{R}^{b} = u_R + c_R q_{R}(h_{*mrr})\;.
 \end{equation}
\end{itemize}
\end{theorem}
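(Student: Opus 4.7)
The plan is to mirror the structure of the proof of Theorem \ref{swe:theoTMSb} (the $TMS_a$ case), replacing the linear interpolation data by $\{P_m, P_{rr}\}$ throughout. The R/R case in (\ref{swe:tsmdRR}) is immediate, since the formulas there recover the exact wave speeds and are thus trivially bounds. For the remaining three cases (R/S, S/R, S/S), the entire argument reduces to establishing the single inequality
\begin{equation}
h_{*mrr} \geq h_*\,.
\end{equation}
Once this is in hand, monotonicity of $q_K$ (Lemma \ref{swe:theoshockspeed}) immediately gives the stated bounds: for a left shock, $S_L^b = u_L - c_L q_L(h_{*mrr}) \leq u_L - c_L q_L(h_*) = S_L^{Ex}$, and analogously $S_R^b \geq S_R^{Ex}$ for a right shock; for any wave family that is a rarefaction, the corresponding estimate in the theorem is the exact expression.

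To prove $h_{*mrr} \geq h_*$, I would first observe that $f(h_{*rr}) \geq 0$. This is Lemma \ref{swe:theorar} applied at $h = h_{*rr}$: since $h_{*rr}$ is by construction the (unique positive) root of $f^{RAR}$, we have
\begin{equation}
f(h_{*rr}) \geq f^{RAR}(h_{*rr}) = 0\,.
\end{equation}
In all three non-trivial cases the hypothesis gives $f_{min} < 0 \leq f(h_{*rr})$, so the denominator in the definition (\ref{swe:tsmdLin}) of $h_{*mrr}$ is strictly positive and the secant through $P_m$ and $P_{rr}$ crosses zero at exactly $h_{*mrr}$. Concavity of $f$ (Lemma \ref{swe:theoConcaveDown}) then implies that on $[h_{min}, h_{*rr}]$ the graph of $f$ lies pointwise above this secant, so $f(h_{*mrr}) \geq 0$. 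Since $f$ is monotone increasing with unique positive root $h_*$, this forces $h_{*mrr} \geq h_*$, as required.

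The main obstacle is essentially bookkeeping rather than analysis: in each of R/S, S/R and S/S one must check that the hypothesis on which of $h_L, h_R$ realises $h_{min}$ (and the signs of $f_{min}, f_{max}$) is consistent with the claimed wave labels, so that the correct branch of $q_K$ and the correct monotonicity direction are being used. A subtle point worth emphasising is that in the R/S and S/R cases $h_{max}$ lies \emph{above} $h_*$, while in the S/S case it lies \emph{below} $h_*$; by contrast $h_{*rr}$ always lies above $h_*$ by Lemma \ref{swe:theorar}. This is precisely why replacing $P_M$ with $P_{rr}$ in the interpolation yields a single argument that works uniformly across the last three cases, so the $TMS_b$ proof collapses to the one concavity--monotonicity computation above, without any new analytic ingredients beyond Lemmas \ref{swe:theorar}--\ref{swe:theoConcaveDown}.
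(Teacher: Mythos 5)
Your proposal is correct and follows essentially the same route as the paper's proof: reduce every non-trivial case to the single inequality $h_{*mrr}\ge h_*$, obtained from $f(h_{*rr})\ge f^{RAR}(h_{*rr})=0$ (Lemma \ref{swe:theorar}), concavity of $f$ (Lemma \ref{swe:theoConcaveDown}) applied to the secant through $P_m$ and $P_{rr}$, and then monotonicity of $q_K$ (Lemma \ref{swe:theoshockspeed}). If anything, your write-up is more complete than the paper's, which in the R/S case omits the (needed) appeal to Lemma \ref{swe:theorar} to guarantee that the secant's zero lies in $[h_{min},h_{*rr}]$.
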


\begin{proof}
We consider the four possible wave configurations separately.

\paragraph{$TMS_b$ - Proof for case of left rarefaction/right rarefaction (R/R)}
 Wave speed estimates (\ref{swe:tsmcRR}) are identical to the exact wave speeds for this wave configuration. 

\paragraph{$TMS_b$ - Proof for case of left rarefaction/right shock (R/S)}
 
Wave speed estimates in (\ref{swe:tsmcRS}) for $S^b_L$ is identical to the exact wave speed for this wave type.
Wave speed estimate in (\ref{swe:tsmcRS}) for $S^b_R$ is computed with the same expression used for the exact wave speed for this wave type, see (\ref{swe:15}) , but using $h_{*mrr}$ instead of $h_*$. The resulting estimate is a bound since for this wave configuration we have that:  (\ref{swe:15}) is monotone increasing in $h$ (from Lemma \ref{swe:theoshockspeed}) and $f(h;h_L,h_R)$ is concave down (from Lemma \ref{swe:theoConcaveDown}).
 
\paragraph{$TMS_b$ - Proof for case of left shock/right rarefaction (S/R)} Omitted, see previous case.

\paragraph{$TMS_b$ - Proof for case of left shock/right shock (S/S)}
 Wave speed estimates in (\ref{swe:tsmcRS}) are computed with the same expression used for the exact wave speeds for this wave type, see (\ref{swe:15}) , but using $h_{*mrr}$ instead of $h_*$. The resulting estimates are bounds since for this wave configuration we have that:  $h_{*rr}>h_*$ (from Lemma \ref{swe:theorar}), (\ref{swe:15}) is monotone increasing in $h$ (from Lemma \ref{swe:theoshockspeed}) and $f(h;h_L,h_R)$ is concave down (from Lemma \ref{swe:theoConcaveDown}).
 
\end{proof}
\begin{table}
\begin{center}
\begin{tabular}{|c|c|c|c|} \hline
Wave pattern   &   Conditions       &    $S_{L}^{b}$     &   $S_{R}^{b}$  \\ \hline
$R/R$          &  $f(h_{min})\ge 0$ &    $u_{L} - c_{L}$ &   $u_{R} + c_{R}  $  \\ \hline 
$R/S$          &  $f(h_{min}) < 0$ \;, $f(h_{max}) > 0$ \;, $h_{min}=h_{R}$ &  $u_{L} -  c_{L}$  &  $ u_{R} +  c_{R}q_{R}(h_{*mrr}) $ \\ \hline
$S/R$ &  $f(h_{min}) < 0$ \;, $f(h_{max}) > 0$ \;, $h_{min}=h_{L}$    &  $ u_{L} -  c_{L}q_{L}(h_{*mrr}) $  & $  u_{R} +  c_{R} $ \\ \hline
$S/S$  &  $f(h_{max}) < 0$  &   $  u_{L} -  c_{L}q_{L}(h_{*mrr})   $           &$   u_{R} + c_{R}q_{R}(h_{*mrr})  $  \\ \hline 
\end{tabular}       
\caption{$TMS_b$ bound estimates $S_{L}^{b}$ and $S_{R}^{b}$ on minimal and maximal wave speeds for the shallow water equations (\ref{swe:cons}). Function $q_{K}( h)$ ($K=L, R$), given in Eqs. (\ref{swe:13})-(\ref{swe:15}). Function  $h_{*mrr}$ is given in Eq. (\ref{swe:tsmdLin})}.
\label{tab:sweTMSb}
\end{center}
\end{table}

\subsection{SWE speed bound estimate: approach $TMS_c$} 

This method is a simplification of $TMS_c$ presented for the Euler equations. Table \ref{tab:sweTMSc} summarizes the results presented here. 

\begin{theorem} \label{swe:theoTMSc}
 
\noindent{\bf Method $TMS_c$ for SWE:} The exact left and right 
wave speeds $S_{L}^{Ex}$ and $S_{R}^{Ex}$ are bounded by $S_{L}^{b}$ and $S_{R}^{b}$, computed as follows:
\begin{itemize}
 \item{\bf Case R/R: two rarefaction waves.} In this case we have that  $f_{min}\geq0$. Then one sets
 \begin{equation} \label{swe:tsmbRR}
  S_{L}^{b} = S^{Ex}_L = u_L-c_L\;, \hspace{5mm} S_{R}^{b} = S_{R}^{Ex} = u_R+c_R\;.
 \end{equation}
 \item{\bf Case R/S: left rarefaction and right shock.} The conditions are: $f_{min}<0$, $f_{max}>0$ and $h_{max}=h_L$. Then one sets
     \begin{equation}\label{swe:tsmbRS}
        S_{L}^{b} = u_L- c_L\;, \hspace{5mm} S_{R}^{b} = u_R + c_R q_{R}(h_L)\;.
     \end{equation}
 \item{\bf Case S/R: left shock and right rarefaction.} The conditions are: $f_{min}<0$, $f_{max}>0$ and $h_{max}=h_R$. Then one sets 
     
     \begin{equation}\label{swe:tsmbSR}
       S_{L}^{b} = u_L - c_L q_{L}(h_R) \;, \hspace{5mm} S_{R}^{b} = u_R + c_R\;.
     \end{equation}
\item{\bf Case S/S: two shock waves.} If $f_{max}\leq0$ one sets
 \begin{equation}\label{swe:tsmbSS}
  S_{L}^{b} = u_R- c_R\;, \hspace{5mm} S_{R}^{b} = u_L + c_L\;.
 \end{equation}
 
\end{itemize}
\end{theorem}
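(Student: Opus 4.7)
The proof proceeds case by case. For R/R both waves are rarefactions, so $S_L^{Ex} = u_L - c_L$ and $S_R^{Ex} = u_R + c_R$ are exact, matching the claimed bounds. For R/S (the conditions force a left rarefaction and right shock) one has $h_R \le h_* \le h_L$ directly from the rarefaction/shock characterisation in Lemma \ref{sweWaveRelations}; hence $S_L^b = u_L - c_L$ is exact, and by monotonicity of $q_R$ (Lemma \ref{swe:theoshockspeed}) one gets $S_R^{Ex} = u_R + c_R q_R(h_*) \le u_R + c_R q_R(h_L) = S_R^b$. The S/R case is handled by the symmetric argument.

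The substantive case is S/S, where the claimed bounds $S_L^b = u_R - c_R$ and $S_R^b = u_L + c_L$ invoke data from the \emph{opposite} side, so the monotonicity-in-$h_*$ trick used for the other $TMS$ variants no longer applies. My plan is to introduce the star-state characteristic speeds as intermediaries via the Lax entropy conditions
\[
 S_L^{Ex} > u_* - c_*, \qquad S_R^{Ex} < u_* + c_*,
\]
which hold for the 1-shock and 2-shock respectively (and can alternatively be verified directly from the shock-speed formula of Lemma \ref{sweWaveRelations}: for the left shock, the identity $u_* - S_L = (h_L/h_*)\, c_L q_L(h_*) = \sqrt{g h_L(h_L+h_*)/(2 h_*)}$ combined with $h_* \ge h_L$ yields $(u_*-S_L)^2 \le g h_* = c_*^2$ after factoring $(h_*-h_L)(2h_*+h_L) \ge 0$). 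It then suffices to prove $u_* - c_* \ge u_R - c_R$ and $u_* + c_* \le u_L + c_L$.

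For the first, substituting the right-shock Rankine-Hugoniot relation $u_* - u_R = f_R(h_*)$ reduces the claim to $(h_* - h_R)\sqrt{g(h_* + h_R)/(2 h_* h_R)} \ge \sqrt{g}(\sqrt{h_*} - \sqrt{h_R})$. Factoring $h_* - h_R = (\sqrt{h_*} - \sqrt{h_R})(\sqrt{h_*} + \sqrt{h_R})$ and dividing by the positive factor $\sqrt{g}(\sqrt{h_*} - \sqrt{h_R})$, the inequality becomes, after squaring,
\[
 (\sqrt{h_*} + \sqrt{h_R})^2 (h_* + h_R) \ge 2 h_* h_R,
\]
which is immediate from AM-GM since the left side is at least $(h_*+h_R)^2 \ge 4 h_* h_R$. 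The second inequality is the mirror image, using the left-shock Rankine-Hugoniot relation $u_L - u_* = f_L(h_*)$, and reduces to the same elementary bound with $h_R$ replaced by $h_L$.

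The main obstacle, and the reason S/S needs a different treatment than the other three patterns, is precisely this bridging: the claimed bounds do not involve $h_*$ at all, so one must relate the exact shock speed to a pure function of the opposite data. The Lax entropy condition supplies the missing link by sandwiching $S_L^{Ex}$ above $u_*-c_*$ and $S_R^{Ex}$ below $u_*+c_*$, after which both sides of the argument collapse to the single AM-GM-type inequality above.
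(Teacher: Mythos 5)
Your proposal is correct and follows essentially the same route as the paper: the R/R, R/S and S/R cases are handled via monotonicity of $q_K$ together with the ordering $h_R < h_* < h_L$ (resp.\ its mirror), and the S/S case by sandwiching the exact shock speeds against the star-state characteristic speeds $u_* \pm c_*$ via the Lax entropy condition and then bounding those through the Rankine--Hugoniot relations, which collapses to the same elementary inequality the paper obtains in the variable $y=h_*/h_K$. The only additions are your explicit verification of the Lax condition from the shock-speed formula and your writing out both halves of the S/S case, where the paper proves only the $S_R^b$ half and appeals to symmetry for $S_L^b$.
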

\begin{proof}

We consider the four possible wave configurations separately.

\paragraph{$TMS_c$ - Proof for case of left rarefaction/right rarefaction (R/R)}
 
Wave speed estimates (\ref{swe:tsmbRR}) are identical to the exact wave speeds for this wave configuration.

\paragraph{$TMS_c$ - Proof for case of left rarefaction/right shock (R/S)}
 
Wave speed estimates in (\ref{swe:tsmbRS}) for $S^b_L$ is identical to the exact wave speed for this wave type.
Wave speed estimate in (\ref{swe:tsmbRS}) for $S^b_R$ is computed with the same expression used for the exact wave speed for this wave type, see (\ref{swe:15}) , but using $h_L$ instead of $h_*$. The resulting estimate is a bound since for this wave configuration $h_L>h_*$ and (\ref{swe:15}) is monotone increasing in $h$ (see Lemma \ref{swe:theoshockspeed}).
 
\paragraph{$TMS_c$ - Proof for case of left shock/right rarefaction (S/R)}
 
Proving that expressions in (\ref{swe:tsmbSR}) are bounds is entirely analogous to the previous case. 

\paragraph{$TMS_c$ - Proof for case of left shock/right shock (S/S)}
 
The Lax entropy condition ensures 
\begin{equation}                                                 \label{swe:28} 
		u_{*} + c_{*} > S_{R}^{Ex}\;.
\end{equation}
The task is to find a bound for the characteristic speed $u_{*} + c_{*} $ in the start region.  Recalling our hypothesised bound, we want to prove that
\begin{equation}                                                 \label{swe:29} 
		S_{R}^{b} = u_{L} + c_{L}  \ge u_{*} + c_{*}   \;.
\end{equation}
We recall that for the left shock wave (\ref{swe:13}) holds, such relation reads
\begin{equation}                                                 \label{swe:30}
		u_{*} = u_{L} - f_L = u_{L} -  (h_{*}-h_{L}) \sqrt{\frac{1}{2} g \left(\frac{h_{*}+h_{L}}{h_{*}h_{L}}\right)} = u_{L} -  (y-1) \sqrt{\frac{1}{2} g h_L \left(1+y\right)} ,
		\hspace{5mm}  y \equiv \frac{h_{*}}{h_{L}} \;.
\end{equation}
Substitution of  $u_{*}$  into (\ref{swe:29}), division  by $c_{L}$ and simple manipulations give
\begin{equation}                                                 \label{swe:31a} 
		1 \ge \frac{c_{*}}{c_{L}}-\frac{f_{L}}{c_{L}} \;,
\end{equation}
which can be rewritten as
\begin{equation}                                                 \label{swe:31_b} 
		\left(\sqrt{y} -1\right)  \le \left(y-1\right) \sqrt{\frac{1}{2}(1+y)} \;.
\end{equation}
Re-writing the right hand side we obtain:
\begin{equation}                                 \label{swe:31_c} 
		 \left(\sqrt{y} -1\right)  \le \left(\sqrt{y} -1\right) \left(\sqrt{y} +1\right) \sqrt{\frac{1}{2}(1+y)} \;
\end{equation}
and then
\begin{equation}                              \label{swe:31_d} 
		 1  \le \left(\sqrt{y} +1\right) \sqrt{\frac{1}{2}(1+y)} \;.
\end{equation}
Since we are considering a S/S condition we have that $y>1$ and therefore inequality (\ref{swe:31_d}) is valid. This concludes the prove that $ S^b_R$ in (\ref{swe:29}) is a bound. 

The proof for $S^b_L$ is analogous to the one performed for $S^b_R$ and is thus omitted. As all four possible configurations have been considered the proof is complete. Hence, we obtain that (\ref{swe:29}) results in a bound for $S_R^e$. With this last case we have covered all four possible configurations and the proof is complete.
\end{proof}
\begin{table}
\begin{center}
\begin{tabular}{|c|c|c|c|} \hline
Wave pattern   &   Conditions       &    $S_{L}^{b}$     &   $S_{R}^{b}$  \\ \hline
$R/R$   &  $f(h_{min})\ge 0$     &    $u_{L} - c_{L}$                    &   $u_{R} + c_{R}  $  \\ \hline 
$R/S$   &  $f(h_{min}) < 0$ \;, $f(h_{max}) > 0$ \;, $h_{min}=h_{R}$ &  $u_{L} -  c_{L}$  &  $ u_{R} +  c_{R}q_{R}(h_{L}) $ \\ \hline
$S/R$ &  $f(h_{min}) < 0$ \;, $f(h_{max}) > 0$ \;, $h_{min}=h_{L}$    &  $ u_{L} -  c_{L}q_{L}(h_{R}) $  & $  u_{R} +  c_{R} $ \\ \hline
$S/S$  &  $f(h_{max}) < 0$  &   $  u_{R} -  c_{R}   $           &$   u_{L} + c_{L}  $  \\ \hline 
\end{tabular}       
\caption{$TMS_c$ bound estimates $S_{L}^{b}$ and $S_{R}^{b}$ on minimal and maximal wave speeds for the blood flow equations (\ref{swe:cons}). Function $q_{K}(h)$ ($K=L, R$) given in Eqs. (\ref{swe:13})-(\ref{swe:15}). }\label{tab:sweTMSc}
\end{center}

\end{table}

\subsection{SWE speed bound estimate: approach $TMS_d$}

This method is new and was not applied to the Euler equations above. The estimate is given explicitly in terms of left and right state vectors and uses very little information about the Riemann problem.

\begin{theorem}  \label{swe:theoTMSa}
\noindent{\bf Method $TMS_d$ for SWE:} The exact left and right 
wave speeds $S_{L}^{Ex}$ and $S_{R}^{Ex}$ are bounded by $S_{L}^{b}$ and $S_{R}^{b}$  respectively, with
\begin{equation}                           \label{swe:16} 
		S_{L}^{b} = \min\{u_{L} - c_{L},u_{R} - \alpha_L c_{R}\} \le S_{L}^{e} \;, \hspace{3mm} \alpha_{L}=2
\end{equation}
and
\begin{equation}                            \label{swe:17} 
		S_{R}^{b} = \max\{u_{R} + c_{R}, u_{L} + \alpha_{R} c_{L} \} \ge S_{R}^{b}\;, \hspace{3mm} \alpha_{R}=2 \;.
\end{equation}
\end{theorem}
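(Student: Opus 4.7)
The plan is to split on the four wave patterns R/R, R/S, S/R, S/S and handle the left and right bounds symmetrically, using three ingredients already developed in the excerpt: (i) the Riemann invariants from Lemma~\ref{sweWaveRelations}, items~1; (ii) the Lax entropy condition, which for a right shock gives $S_R^{Ex} \le u_* + c_*$ and for a left shock gives $S_L^{Ex} \ge u_* - c_*$; and (iii) the S/S bound $u_* + c_* \le u_L + c_L$ (and symmetrically $u_* - c_* \ge u_R - c_R$) that was established inside the proof of Theorem~\ref{swe:theoTMSc} via the algebraic inequality $(\sqrt{y}+1)\sqrt{(1+y)/2}\ge 1$ for $y\ge 1$. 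Everything in the proposed bound has slack relative to the sharper $TMS_c$ choice, so each case should reduce to something already available.

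For the right bound $S_R^b = \max\{u_R+c_R,\, u_L+2c_L\}$, the R/R and S/R cases are trivial because the right wave is then a rarefaction and $S_R^{Ex}=u_R+c_R$, which is one of the two arguments of the max. In the R/S case, the left wave is a rarefaction, so the Riemann invariant (\ref{swe:10}) gives $u_*+2c_*=u_L+2c_L$; combined with Lax,
\begin{equation*}
 S_R^{Ex} \le u_* + c_* \;=\; u_L + 2c_L - c_* \;\le\; u_L + 2c_L \;\le\; S_R^b,
\end{equation*}
since $c_* \ge 0$. In the S/S case, I would invoke the inequality $u_*+c_* \le u_L+c_L$ proved for Theorem~\ref{swe:theoTMSc} (S/S), getting $S_R^{Ex} \le u_*+c_* \le u_L+c_L \le u_L+2c_L \le S_R^b$. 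The factor $2$ is strictly wasteful here but makes the bound an explicit function of the data alone, which is the whole point of $TMS_d$.

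The argument for $S_L^b = \min\{u_L-c_L,\, u_R-2c_R\}$ mirrors the above. R/R and R/S are trivial (the left wave is a rarefaction with speed $u_L-c_L$). In S/R the right wave is a rarefaction, so (\ref{swe:11}) gives $u_*-2c_* = u_R-2c_R$, and Lax on the left shock yields
\begin{equation*}
 S_L^{Ex} \ge u_* - c_* \;=\; u_R - 2c_R + c_* \;\ge\; u_R - 2c_R \;\ge\; S_L^b.
\end{equation*}
Finally, in S/S, the symmetric version of the S/S inequality from Theorem~\ref{swe:theoTMSc} gives $u_*-c_* \ge u_R-c_R$, hence $S_L^{Ex} \ge u_R-c_R \ge u_R-2c_R \ge S_L^b$.

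The only step that is not essentially bookkeeping is the S/S case, and the heavy lifting there has already been done in the proof of Theorem~\ref{swe:theoTMSc}; I would therefore not redo the algebraic reduction to $(\sqrt y+1)\sqrt{(1+y)/2}\ge 1$, but cite it. The conceptual point worth emphasising in the write-up is why the coefficient $\alpha_L=\alpha_R=2$ is special: it is exactly the value for which the Riemann invariant across an opposite-family rarefaction transfers the Lax bound at $Q_*$ into a bound expressed in the opposite data state, with the residual $-c_*$ (or $+c_*$) having a favourable sign. Any smaller coefficient would break the R/S and S/R cases; larger coefficients remain valid but are coarser.
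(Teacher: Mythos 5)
Your proposal is correct and follows essentially the same route as the paper: the rarefaction cases are dispatched as exact, the mixed R/S and S/R cases combine the Lax entropy condition with the opposite-family Riemann invariant (your observation that the residual $-c_{*}$ has a favourable sign is exactly the paper's reduction to $2-\sqrt{y}\le\alpha_{R}$ with $0<y<1$), and the S/S case is delegated to the inequality $u_{*}+c_{*}\le u_{L}+c_{L}$ already proved for the $TMS_c$ estimate. No gaps.
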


\begin{proof}

We consider the four possible wave configurations separately.

\paragraph{$TMS_d$ - Proof for case of left rarefaction/right rarefaction (R/R)}

The proof in this case is trivial. If the exact solution consists of two rarefaction waves, then the exact wave speeds will be
\begin{equation}
 S_{L}^{Ex} = u_L-c_L\;, \hspace{5mm} S_{R}^{Ex} = u_R+c_R\;.
\end{equation}

Clearly (\ref{swe:16}) and (\ref{swe:17}) are bounds to these wave speeds.

\paragraph{$TMS_d$ - Proof for case of left rarefaction/right shock (R/S)} 

Application of the Lax entropy condition gives
\begin{equation}                             \label{swe:18} 
		u_{*} + c_{*} > S_{R}^{Ex}\;.
\end{equation}
The task is to find a bound for the characteristic speed $u_{*} + c_{*} $ in the {\it Star Region}.  We seek a bound of the form
\begin{equation}                              \label{swe:19} 
		S_{R}^{b} = u_{L} + \alpha_{R} c_{L}  \ge u_{*} + c_{*}   \;,
\end{equation}
with $\alpha_{R}$ yet to be found. From the left Riemann invariant (\ref{swe:10}) we may write
\begin{equation}                               \label{swe:20} 
		u_{*} = u_{L} - 2 (c_{*}-c_{L}) \;
\end{equation}
and thus the characteristic speed becomes
\begin{equation}                                \label{swe:21} 
	 u_{*} +c_{*} = u_{L} +2c_{L} - c_{*} \;.
\end{equation}
By imposing (\ref{swe:19}) we have
\begin{equation}                                \label{swe:22} 
		u_{L} +2c_{L} - c_{*}  \le u_{L} + \alpha_{R} a_{L} \;.
\end{equation}
Simple manipulations lead to
\begin{equation}                                \label{swe:23} 
	 2 - \sqrt{y} \le \alpha_{R}\;, \hspace{3mm}  y=\frac{h_{*}}{h_{L}} \;.
\end{equation}
Since the left wave is a rarefaction $0 < y < 1$. The most restrictive case is $\alpha_R=2$ and the result follows (see \cite{Toro:2001a} for related discussion on dry (vacuum) fronts).

\paragraph{$TMS_d$ - Proof for case of left shock / right rarefaction (S/R)} 
The proof for this configuration is analogous to the previous one for the R/S configuration and it is thus omitted.

\paragraph{$TMS_d$ - Proof for case of left shock / right shock (S/S)} 
The proof is analogous to that for the same wave configuration in Theorem \ref{swe:theoTMSb} and is thus omitted. 

\end{proof}

\subsection{Numerical tests for the shallow water equations}

In this section we perform some numerical tests to assess the performance of existing and newly proposed wave speed estimates. We do so through five Riemann problems, the initial conditions of which are given in Table \ref{Tab:SWE_Tests_IC}. The tests have been chosen so that all possible wave patterns are included.
Numerical results for the maximal and minimal wave speeds are shown in Tables \ref{Tab:SWE_Tests_SR} and \ref{Tab:SWE_Tests_SL} respectively. The results confirm that all proposed estimates constitute bounds for the extreme wave speeds, while existing estimates (\ref{Davis:a}) and (\ref{Davis:b}) fail to do so in general.
Our estimates $TMS_a$ and $TMS_b$ perform very satisfactorily, especially in the presence of very strong shocks, as for Test 3. Our proposed simpler bounds  $TMS_c$ and $TMS_d$ are less accurate, with $TMS_c$ giving more accurate estimates than $TMS_d$ for almost all cases, except for Test 2, where the left shock speed is grossly overestimated by $TMS_c$. In general, $TMS_a$ and $TMS_b$ perform better than other estimates. Test 4 illustrates the improvement resulting from using a linear interpolation and not just the two-rarefaction solution when comparing $To$ and $GP$ estimates with $TMS_b$.
\begin{table}[h!]    
\begin{center}
\begin{tabular}{|c|c|c|c|c|c|c|c|} \hline
Test &  $h_{L}$  & $u_{L}$  & $h_{R}$  & $u_{R}$ & $h_*$  & $u_*$  & wave pattern  \\ \hline
1    & 1.0000    & 0.0000   &  0.7000  & 0.0000  & 0.8430 & 0.5121 & rar-shock     \\ \hline
2    & 0.0010    & 0.0000   &  1.0000  & 0.0000  & 0.0668 & -4.6424 & shock-rar     \\ \hline
3    & 1.0000    & 3.0000   &  0.5000  & 0.0000  & 1.1671 & 2.4959 & shock-shock    \\ \hline 
4    & 1.0000    & 100.0000   &  0.5000  & 0.0000  & 19.0839 & 58.9340 & shock-shock    \\ \hline
5    & 1.0000    & -5.0000  &  1.0000  & 5.0000  & 0.0406 & 0.0000 & rar-rar         \\ \hline
\end{tabular}     
\caption{Initial conditions for five Riemann problems. Columns $6$ and $7$ show the exact solutuion for depth $h_{*}$ and velocity $u_{*}$ in the {\it Star Region}. The type of emerging wave patter is shown in the last column. SI units used.}
\label{Tab:SWE_Tests_IC}     
\end{center}
\end{table}
\begin{table}[h!]
\begin{center}
\begin{tabular}{|c|c|c|c|c|c|c|c|c|c|} \hline
Test & $S^{Ex}_{R}$  & $S^{Dav_a}_{R}$ & $S^{Dav_b}_{R}$   & $S^{To}_{R}$ & $S^{GP}_{R}$   & ${S}^{TMS_a}_{R}$  & ${S}^{TMS_b}_{R}$ & ${S}^{TMS_c}_{R}$ & ${S}^{TMS_d}_{R}$\\ \hline
1  & 3.0177  & \textcolor{red}{\bf 2.6192} &   3.1305 &   3.0184 &   3.0184 &   3.0344 &   3.0178 &   3.4496 &   6.2610 \\ \hline
2  & 3.1305  & 3.1305           &  3.1305 &  3.1305 &  3.1305 &  3.1305 &  3.1305 &  3.1305 &  3.1305 \\ \hline
3  & 4.3667  & \textcolor{red}{\bf 2.2136} &   6.1305 &   4.4552 &   4.4552 &   4.3686 &   4.3789 &   6.1305 &   9.2610 \\ \hline
4  & 60.5197  & \textcolor{red}{\bf 2.2136} & 103.1305 & 245.3887 & 245.3887 &  61.1536 &  61.6620 & 103.1305 & 106.2610 \\ \hline
5  & 8.1305  & 8.1305           &  8.1305           &   8.1305 &   8.1305 &   8.1305 &   8.1305 &   8.1305 &   8.1305 \\ \hline
\end{tabular}
\caption{Results for maximal wave speed $S_R$. Column $2$ displays the exact solution. Existing estimates are shown in columns $3$ to $6$,
while colums $7$ to 10 show the new estimates proposed in this paper. A value in red indicates that the speed estimate fails to be a bound for the exact solution.
SI units used.}
\label{Tab:SWE_Tests_SR}            
\end{center}
\end{table}

\vspace{0.1cm}

\vspace{1mm}
\begin{table}[h!]
\begin{center}
\begin{tabular}{|c|c|c|c|c|c|c|c|c|c|} \hline
Test & $S^{Ex}_{L}$  & $S^{Dav_a}_{L}$ & $S^{Dav_b}_{L}$   & $S^{To}_{L}$ & $S^{GP}_{L}$   & ${S}^{TMS_a}_{L}$  & ${S}^{TMS_b}_{L}$ & ${S}^{TMS_c}_{L}$ & ${S}^{TMS_d}_{L}$\\ \hline
1  & -3.1305  & -3.1305           &  -3.1305 &  -3.1305 &  -3.1305 &  -3.1305 &  -3.1305 &  -3.1305 &  -5.2383 \\ \hline
2  & -4.7130  & \textcolor{red}{\bf - 0.0990} &   \textcolor{red}{\bf - 3.1305} &  -18.6593 &  -18.6593 &   -5.6816 &   -5.3080 &  -70.0350 &   -6.2610  \\ \hline
3  & -0.5204  & \textcolor{red}{\bf -0.1305} &  -2.2136 &  -0.5849 &  -0.5849 &  -0.5218 &  -0.5293 &  -2.2136 &  -4.4272 \\ \hline
4  & 56.6632 & \textcolor{red}{\bf 96.8695} &  -2.2136 & -74.0668 & -74.0668 &  56.2149 &  55.8554 &  -2.2136 &  -4.4272 \\ \hline
5  & -8.1305  & -8.1305           &  -8.1305 &  -8.1305 &  -8.1305 &  -8.1305 &  -8.1305 &  -8.1305 &  -8.1305 \\ \hline
\end{tabular}
\caption{Results for minimal wave speed $S_L$. Column $2$ displays the exact solution. Existing estimates are shown in columns $3$ to $6$,
while colums $7$ to 10 show the new estimates proposed in this paper. A value in red indicates that the speed estimate fails to be a bound for the exact solution.
SI units used.}
\label{Tab:SWE_Tests_SL}            
\end{center}
\end{table}

\section{Speed Bounds: Blood Flow Equations}
\label{sec:boundsbloodflow}

\subsection{Equations and wave relations}

We first briefly review a widely used one-dimensional blood flow model for arteries, hereafter called the Blood Flow Equations (BFEs) and state the main relations that we later use to obtain wave speed bounds. For full details on the equations see for example \cite{Toro:2012a}, \cite{Toro:2013a}, \cite{Toro:2016a} and references therein. We consider a system of conservation laws 
\begin{equation}                            \label{bfe:cons}
      \partial_{t} {\bf Q}(x,t) + \partial_{x} {\bf F} ({\bf Q}(x,t)) = {\bf 0}  \;
\end{equation}
representing mass conservation and momentum balance in a vessel with walls exhibiting an elastic behaviour. The vector of conserved variables is ${\bf Q}(x,t)=\left[A, A u\right]^{T}$  and the flux vector is ${\bf F(Q)}= \left[A u, Au^2+\gamma A^{\frac{3}{2}}\right]^{T}$. Moreover, $x \in \mathbb{R}$ and $t \in [0,+\infty)$ are the space and time coordinates, $u\in\mathbb{R}$ is the fluid velocity and $A \in (0,+\infty)$ is the cross-sectional area of the vessel. Here
$\gamma$ is a constant parameter given by
\begin{equation}
    \gamma = \frac{\beta}{3\rho}\,,
\end{equation}
with $\beta$ being a positive constant and $\rho$ the fluid density, so $\gamma \in (0,+\infty)$.
We also define the wave speed $c$ (analogous to the sound speed in gas dynamics, or the celerity in shallow water) as 
\begin{equation}
    c = c(A) =\sqrt{\frac{3}{2} \gamma A^{\frac{1}{2}}} = \sqrt{\frac{3}{2} \frac{\beta}{3\rho} A^{\frac{1}{2}}} = \sqrt{\frac{\beta}{2\rho}} A^{\frac{1}{4}} = \zeta A^{\frac{1}{4}}
    \,, \hspace{3mm} \mbox{with } \zeta=\sqrt{\frac{\beta}{2\rho}}\;.
\end{equation}

The eigenvalues of the Jacobian matrix corresponding to the flux vector $\bf F(Q)$ in (\ref{bfe:cons}) are all real and given by 
\begin{equation}                       \label{bfe:eigs}
    \lambda_{1} = u - c \;,  \hspace{3mm} \lambda_{2} = u + c \;.
\end{equation}

The Riemann problem for (\ref{bfe:cons}) is formulated as in (\ref{ee6}). However, for the BFEs
there are only two wave families emerging from the initial discontinuity, separating three constant states, namely ${\bf Q}_{L}$, ${\bf Q}_{*}$ and ${\bf Q}_{R}$. The unknown state is ${\bf Q}_{*}$.

Next we present four lemmas that will be used for proving that some proposed wave speed estimates are bounds. 

 \begin{lemma} \label{bfeWaveRelations} 
\noindent{\bf Exact Riemann solution and wave speeds.} 

\begin{enumerate}

\item {\bf Wave jumps across rarefactions:} across the left rarefaction  the left Riemann invariant gives
\begin{equation}                         \label{bfe:10}
     u_{*} + 4c_{*} = u_{L} + 4c_{L}  \;
\end{equation}
and across the right rarefaction the right Riemann invariant gives
\begin{equation}                         \label{bfe:11}
     u_{*} - 4c_{*} = u_{R} - 4c_{R}  \;.
\end{equation}

\item  {\bf Wave jumps across shocks:} 
for a left shock, the Rankine-Hugoniot conditions give
\begin{equation}                          \label{bfe:12} 
		u_{*} = u_{L} - f_{L}  \;; \hspace{3mm}
		f_{L}  =  \sqrt{\frac{\gamma (A_{*}-A_{L})(A_{*}^{3/2}-A_{L}^{3/2})}{A_{L}A_{*}}} \;,
\end{equation}
while for a right shock one has 
\begin{equation}                          \label{bfe:14} 
		u_{*} = u_{R} + f_{R}  \;; \hspace{3mm}
		f_{R}  =  \sqrt{\frac{\gamma (A_{*}-A_{R})(A_{*}^{3/2}-A_{R}^{3/2})}{A_{R}A_{*}}} \;.
\end{equation}
\item {\bf Shock speeds:}
the shock speed for a left shock is given as
\begin{equation}                          \label{bfe:13}
S_{L} = u_{L} - c_{L} q_{L} \;, \hspace{3mm} 
      q_{L} = \sqrt{\frac{2}{3}\frac{(y^{3/2}-1)y}{(y-1)}} \;,  \hspace{3mm} y= \frac{A_{*}}{A_{L}} \;.
\end{equation}
The speed of a right shock is
\begin{equation}                           \label{bfe:15}
      S_{R} = u_{R} + c_{R} q_{R} \;, \hspace{3mm} 
      q_{R} = \sqrt{\frac{2}{3}\frac{(y^{3/2}-1)y}{(y-1)}} \;,  \hspace{3mm} y= \frac{A_{*}}{A_{R}} \;.
\end{equation}
\item {\bf Solution for area $A_{*}$.}
Solving the Riemann problem for system (\ref{bfe:cons}) requires solving a nonlinear algebraic equation  

\begin{equation}  \label{bfe:equation}
          f(A)=0  \;,
\end{equation} 
with root $A_*$ and function $f$
\begin{equation}
    f(A;A_L,A_R) = f_L(A;A_L) + f_R(A; A_R) + u_R - u_L\,, \label{bfe:starprob}
\end{equation}
where $(A_L,u_L)$ and $(A_R,u_R)$ are the left/right states for the Riemann problem and
\begin{equation} \label{bfe:starprobSubFunc}
    f_K(A;A_K) = \left\{ \begin{array}{lllll}
         f^{RAR}_K &= 4 (c(A)-c(A_K))\;, & \mbox{if } & A<A_K   & (rarefaction) \,,\\
         f^{SHO}_K &= \left( \gamma \frac{(A-A_K)(A^\frac{3}{2}-A^\frac{3}{2}_K)}{A A_K} \right)^{\frac{1}{2}}\;, & \mbox{if } & A\geq A_K  & (shock)\,. 
    \end{array}\right.
\end{equation}

\item {\bf Two-rarefaction solution for area $A_{*rr}$.}
When both branches in (\ref{bfe:starprob})-(\ref{bfe:starprobSubFunc}) are those of rarefaction waves then we have
\begin{equation} \label{bfe:twoRarFunc}
    f^{RAR}(A;A_L,A_R) = f^{RAR}_L(A;A_L) + f^{RAR}_R(A; A_R) + u_R - u_L\,
\end{equation}
and the closed-form solution of $f^{RAR}(A;A_L,A_R)=0$  is
\begin{equation}\label{bfe:atr}
 A_{*rr} = \left\{ \frac{2\rho\left[\frac{1}{2}(c_L+c_R)-\frac{1}{8}(u_R-u_L)\right]^2}{\beta}\right\}^2\;.
\end{equation}
Note that the general case (\ref{bfe:atr}) is an approximation to exact solution of  (\ref{bfe:equation}).
\end{enumerate}
\end{lemma}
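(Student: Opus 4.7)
The plan is to derive each of the five items from first principles of hyperbolic conservation laws, paralleling the derivations already sketched in Lemmas 1 and 2 for the Euler and shallow water systems, since the blood-flow equations have the same $2\times 2$ genuinely nonlinear structure with $\gamma A^{3/2}$ playing the role of the pressure-like term.

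For item 1, I would compute the Jacobian of (\ref{bfe:cons}), confirm the eigenvalues in (\ref{bfe:eigs}), and identify the right eigenvectors $r^{1,2}=(1,u\mp c)^{T}$. Integrating the characteristic ODE $du=\mp(c/A)\,dA$ along the integral curves and using the explicit form $c(A)=\zeta A^{1/4}$, which yields $c\,dA/A = 4\,dc$, gives $u\pm 4c = \text{const}$ across a $1$- or $2$-rarefaction, proving (\ref{bfe:10})-(\ref{bfe:11}).

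For items 2 and 3, I would apply the Rankine-Hugoniot jump conditions in the shock frame, $A_K V_K = A_* V_*$ and $A_K V_K^2 + \gamma A_K^{3/2} = A_* V_*^2 + \gamma A_*^{3/2}$ with $V = u - S$. Eliminating $V_*$ via the mass jump and setting $y = A_*/A_K$ gives
\begin{equation*}
V_K^2 \;=\; \gamma A_K^{1/2}\,\frac{y\,(y^{3/2}-1)}{y-1} \;=\; \tfrac{2}{3}\,c_K^2\,\frac{y\,(y^{3/2}-1)}{y-1},
\end{equation*}
since $c_K^2 = \tfrac{3}{2}\gamma A_K^{1/2}$. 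Taking square roots with the sign conventions $V_L>0$ for a left-facing shock and $V_R<0$ for a right-facing one yields (\ref{bfe:13})-(\ref{bfe:15}); back-substituting into $A_* V_* = A_K V_K$ and simplifying the difference $u_*-u_K$ produces $u_*-u_K = \mp f_K$ with $f_K$ in the form (\ref{bfe:12})-(\ref{bfe:14}).

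For items 4 and 5, each non-linear wave contributes a Lax curve $u_* = u_K \mp f_K(A_*;A_K)$, with the piecewise $f_K$ of (\ref{bfe:starprobSubFunc}) obtained from the rarefaction identity in item 1 or the shock identity in item 2; eliminating $u_*$ between the two sides gives (\ref{bfe:starprob}). Specialising to two rarefactions, so that $f_K = 4(c(A_*)-c(A_K))$, reduces (\ref{bfe:starprob}) to a linear equation in $c(A_*)$ with solution $c(A_*) = \tfrac{1}{2}(c_L+c_R) - \tfrac{1}{8}(u_R-u_L)$; inverting $c=\zeta A^{1/4}$ with $\zeta=\sqrt{\beta/(2\rho)}$ delivers (\ref{bfe:atr}). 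The main obstacle is the Rankine-Hugoniot bookkeeping in items 2-3, specifically eliminating $S$ cleanly and massaging the square-root expressions into the compact $q_K$ form; the remaining steps are direct algebraic manipulations. Following the convention of Lemmas 1 and 2, the detailed computations would in practice be omitted with reference to \cite{Toro:2012a,Toro:2013a,Toro:2016a}.
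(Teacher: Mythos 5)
Your proposal is correct, and it actually supplies more than the paper does: the paper's proof of this lemma is simply ``Omitted. Full details are found in \cite{Toro:2016a}'', so there is no in-paper argument to compare against. Your sketch is the standard construction and the key identities all check out. In particular, $c(A)=\zeta A^{1/4}$ gives $(c/A)\,dA=4\,dc$, which yields the generalized Riemann invariants $u\pm 4c=\mathrm{const}$ of (\ref{bfe:10})--(\ref{bfe:11}); the shock-frame elimination with $y=A_*/A_K$ gives $V_K^2=\gamma A_K^{1/2}\,y(y^{3/2}-1)/(y-1)=\tfrac{2}{3}c_K^2\,y(y^{3/2}-1)/(y-1)$, which with the signs $V_L>0$, $V_R<0$ reproduces $q_K$ in (\ref{bfe:13}) and (\ref{bfe:15}); back-substitution through $A_KV_K=A_*V_*$ gives $(u_*-u_K)^2=\gamma(A_*-A_K)(A_*^{3/2}-A_K^{3/2})/(A_KA_*)$, i.e.\ (\ref{bfe:12}) and (\ref{bfe:14}); and the two-rarefaction case reduces to $c_*=\tfrac{1}{2}(c_L+c_R)-\tfrac{1}{8}(u_R-u_L)$, which inverted through $A=(2\rho c^2/\beta)^2$ is exactly (\ref{bfe:atr}). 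The only point worth stating explicitly if you write this out in full is the justification of the sign conventions for $V_L$ and $V_R$ (via the Lax entropy condition, $q_K>1$ for an admissible shock so $S_L<u_L$ and $S_R>u_R$), since that is what fixes the branch of the square root in items 2 and 3.
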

\begin{proof}
       Omitted. Full details are found in \cite{Toro:2016a}.
\end{proof}

\begin{lemma} \label{bfe:theorar}
 $f(A;A_L,A_R) \geq f^{RAR}(A;A_L,A_R)\;, \quad \forall A>0\,.$
\end{lemma}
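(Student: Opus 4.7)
The plan is to mimic the structure of Lemma \ref{swe:theorar} for the shallow water case. Since both $f$ and $f^{RAR}$ have the form $f_L + f_R + (u_R - u_L)$ and their $K$-th branches coincide whenever $A < A_K$, it suffices to establish the branch-wise inequality
\begin{equation*}
f^{SHO}_K(A;A_K) \;\geq\; f^{RAR}_K(A;A_K) \quad\text{for } A \geq A_K, \qquad K\in\{L,R\}\,.
\end{equation*}

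Fixing $K$, I would introduce the dimensionless ratio $y = A/A_K \geq 1$ and use the identities $c_K = \zeta A_K^{1/4}$ together with $\gamma A_K^{1/2} = \tfrac{2}{3} c_K^2$ to recast
\begin{equation*}
f^{SHO}_K = c_K \sqrt{\tfrac{2}{3}\,\frac{(y-1)(y^{3/2}-1)}{y}}\,, \qquad f^{RAR}_K = 4 c_K (y^{1/4}-1)\,.
\end{equation*}
Both sides are non-negative on $y \geq 1$, so I can square and then substitute $x = y^{1/4} \geq 1$. The factorizations $x^4 - 1 = (x-1)(x+1)(x^2+1)$ and $x^6 - 1 = (x-1)(x+1)(x^4+x^2+1)$ extract a common factor of $(x-1)^2$ from the left-hand side, reducing the claim to the polynomial inequality
\begin{equation*}
(x+1)^2(x^2+1)(x^4+x^2+1) \;-\; 24\, x^4 \;\geq\; 0 \quad\text{for } x \geq 1\,.
\end{equation*}

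The main obstacle is verifying this polynomial inequality. Evaluating the left-hand side and its first derivative at $x = 1$ should both yield zero, so that $(x-1)^2$ factors out a second time; performing the synthetic division explicitly, I expect the quotient
\begin{equation*}
Q(x) = x^6 + 4x^5 + 10x^4 + 20x^3 + 10x^2 + 4x + 1\,,
\end{equation*}
whose strictly positive coefficients make positivity on $x \geq 0$ immediate. Combining all factors, the squared difference factors as $(x-1)^4 Q(x) \geq 0$, with equality only at $A = A_K$, which establishes the branch-wise inequality and hence the lemma. The one algebraic subtlety relative to the shallow-water proof is the quarter-power exponent, which forces the $y^{1/4}$ substitution and pushes the comparison up to a degree-eight polynomial rather than the degree-four polynomial encountered in Lemma \ref{swe:theorar}; beyond this, the structure of the argument is identical.
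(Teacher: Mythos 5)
Your proposal is correct and follows essentially the same route as the paper's proof: reduce to the branch-wise inequality $f^{SHO}_K \geq f^{RAR}_K$, nondimensionalize with $y=A/A_K$, square (both sides being non-negative for $y\geq 1$), and extract the factor $(y^{1/4}-1)^4$ from the resulting polynomial inequality. Your quotient $Q(x)=x^6+4x^5+10x^4+20x^3+10x^2+4x+1$ is exactly the product $(1+3y^{1/4}+y^{1/2})(1+y^{1/4}+6y^{1/2}+y^{3/4}+y)$ appearing in the paper's factorization (with $x=y^{1/4}$), so the two arguments coincide up to presentation.
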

\begin{proof}
The approach followed here is similar to the one proposed in \cite{Guermond:2015a}.
 First note that it is sufficient to prove that $f^{SHO}_K \geq f^{RAR}_K$. In fact, fixing $f_K=f^{RAR}_K$ with $f^{SHO}_K \geq f^{RAR}_K$ for either of the two $f_K$ in (\ref{bfe:starprob}) fulfills the statement to be proved.
 We thus concentrate in proving that
 \begin{equation}
     \left( \gamma \frac{(A-A_K)(A^\frac{3}{2}-A^\frac{3}{2}_K)}{A A_K} \right)^{\frac{1}{2}} \geq 4 (c(A)-c(A_K)) \,. \label{eq:toprove}
 \end{equation}
 Before proceeding we introduce $y=A/A_K$ and note that 
 \begin{equation}
     4 (c(A)-c(A_K)) = 4 (\zeta A^{\frac{1}{4}}-\zeta A^{\frac{1}{4}}_K) = 4 \zeta A^{\frac{1}{4}}_K (y^{\frac{1}{4}} -1)\,. \label{eq:rarSimp}
 \end{equation}
 Moreover, we note that 
 \begin{equation}
      \left( \gamma \frac{(A-A_K)(A^\frac{3}{2}-A^\frac{3}{2}_K)}{A A_K} \right)^{\frac{1}{2}} = \left( \gamma  \frac{A_K (y-1) A^\frac{3}{2}_K (y^\frac{3}{2}-1)}{A A_K} \right)^{\frac{1}{2}}  =  \left( \gamma  \frac{ (y-1)  (y^\frac{3}{2}-1)}{y} \right)^{\frac{1}{2}}  A^\frac{1}{4}_K\,.
       \label{eq:shockSimp}
 \end{equation}
 By replacing (\ref{eq:rarSimp}) and (\ref{eq:shockSimp}) into (\ref{eq:toprove}) we obtain
 \begin{equation}
      \left( \gamma \frac{ (y-1)  (y^\frac{3}{2}-1)}{y} \right)^{\frac{1}{2}}  A^\frac{1}{4}_K \geq 4 \zeta A^{\frac{1}{4}}_K (y^{\frac{1}{4}} -1)\,,
 \end{equation}{}
 which proves the claim for $y\leq 1$. Next, for $y>1$, we proceed as follows
 \begin{equation}
       \frac{ (y-1)  (y^\frac{3}{2}-1)}{y}    \geq 16 \frac{\zeta^2}{\gamma}  (y^{\frac{1}{4}} -1)^2\,,
 \end{equation}{}
 noting that 
 \begin{equation}
     \frac{\zeta^2}{\gamma} = \frac{\beta}{2\rho} \frac{3\rho}{\beta} = \frac{3}{2}\,
 \end{equation}{}
 we have that 
 \begin{equation}
       \frac{1}{y} (y-1)  (y^\frac{3}{2}-1)    \geq 24  (y^{\frac{1}{4}} -1)^2\, 
 \end{equation}
 and rearranging
 \begin{equation}
     (y-1) (y^\frac{3}{2}-1) \geq 24 y (y^\frac{1}{4}-1)^2\,,\label{eq:toprove2}
 \end{equation}{}
  We now rewrite the above expression as
 \begin{equation}
     (y-1) (y^\frac{3}{2}-1) - 24 y (y^\frac{1}{4}-1)^2 \geq 0\,
 \end{equation}{}
 and factor it as
\begin{equation}
     (y^\frac{1}{4}-1)^4 (1+3 y^\frac{1}{4}+y^\frac{1}{2})(1+y^\frac{1}{4}+6 y^\frac{1}{2}+y^\frac{3}{4}+y) \geq 0\,,
 \end{equation}{}
 which holds for $y>0$ since all terms are larger or equal to zero for $y>0$. This concludes the proof.
 \end{proof}

\begin{lemma} \label{bfe:theoshockspeed}
 Function $q_K$ in (\ref{bfe:13}) and (\ref{bfe:15}), with $K=\{L,R\}$, is monotone increasing in $A\;,\forall A>0\,.$
\end{lemma}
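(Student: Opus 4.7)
The plan is to reduce the claim to a monotonicity statement in a single simplified variable, then compute a derivative and certify its positivity.

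First, since $y=A/A_K$ with $A_K>0$ fixed, $y$ is a strictly increasing linear function of $A$, and the map $q_K=\sqrt{(2/3)\,g(y)}$ with $g(y)=\dfrac{(y^{3/2}-1)y}{y-1}$ is monotone increasing in $A$ if and only if $g$ is monotone increasing in $y$ on $(0,\infty)$. So I would reduce everything to studying $g$.

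Next I would address the apparent singularity at $y=1$ by factoring: write $y^{3/2}-1=(y^{1/2}-1)(y+y^{1/2}+1)$ and $y-1=(y^{1/2}-1)(y^{1/2}+1)$, and then substitute $z=y^{1/2}>0$. The cancellation gives the clean form
\begin{equation}
g(y)=h(z):=\frac{z^{2}(z^{2}+z+1)}{z+1},
\end{equation}
which is smooth on $(0,\infty)$ (including the point $z=1$), so no special treatment of $y=1$ is needed once this factorization is in place.

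The remaining work is to show $h'(z)>0$ for all $z>0$. Differentiating the rational expression and multiplying through by $(z+1)^2$ gives a numerator
\begin{equation}
(4z^{3}+3z^{2}+2z)(z+1)-(z^{4}+z^{3}+z^{2})=z\bigl(3z^{3}+6z^{2}+4z+2\bigr),
\end{equation}
whose factors are manifestly positive for $z>0$. Hence $h$ is strictly increasing on $(0,\infty)$, so $g$ is strictly increasing on $(0,\infty)$, and therefore $q_K$ is strictly increasing in $A$ on $(0,\infty)$, as claimed.

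I do not expect any serious obstacle here: the only subtle point is the removable singularity at $y=1$, which the factor-and-substitute step eliminates outright; after that, everything reduces to inspecting a cubic with positive coefficients.
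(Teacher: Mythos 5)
Your proof is correct and is essentially the paper's own argument: both reduce the claim to the positivity of the derivative of the quantity under the square root, and both end up certifying positivity of the same cubic $3z^{3}+6z^{2}+4z+2$ in $z=\sqrt{y}$ (the paper writes it as the numerator $3y^{3/2}+6y+4\sqrt{y}+2$ of $q_K'$). Your explicit factor-and-substitute step that removes the apparent singularity at $y=1$ is a slightly cleaner presentation of what the paper calls ``further manipulation,'' but the route is the same.
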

\begin{proof}
 Differentiation of $q_K$ with respect to $y$ and further manipulation results in 
 \begin{equation}
  q'_K = \frac{3 y^{3/2}+6 y+4 \sqrt{y}+2}{2 \sqrt{6} \left(\sqrt{y}+1\right)^2 \sqrt{\frac{y \left(y+\sqrt{y}+1\right)}{\sqrt{y}+1}}}\;,
 \end{equation}
 which satisfies
 \begin{equation}
  q'_K \geq 0\;, \forall y>0\;
 \end{equation}
and the results follows.
 \end{proof}

\begin{lemma} \label{bfe:theoConcaveDown}
  The function (\ref{bfe:starprob})-(\ref{bfe:starprobSubFunc}) is concave down $\forall A>0\,.$
\end{lemma}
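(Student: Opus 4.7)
The plan is to decompose $f(A;A_L,A_R) = f_L(A;A_L) + f_R(A;A_R) + (u_R-u_L)$, discard the additive constant (which does not affect concavity), and use that a sum of concave functions is concave, so it suffices to show each $f_K$ ($K \in \{L,R\}$) is concave down on $(0,\infty)$. Since $f_K$ is piecewise defined by (\ref{bfe:starprobSubFunc}), with the rarefaction formula for $A < A_K$ and the shock formula for $A \geq A_K$, the argument splits into three pieces: concavity of the rarefaction branch, concavity of the shock branch, and $C^1$ matching at $A = A_K$ so that global concavity follows from piecewise concavity.

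The rarefaction branch is handled by a direct computation: from $f^{RAR}_K(A) = 4\zeta(A^{1/4} - A_K^{1/4})$ one obtains $(f^{RAR}_K)''(A) = -\tfrac{3}{4}\zeta A^{-7/4} < 0$. The $C^1$ matching at $A = A_K$ is also routine: both branches vanish there, and a Taylor expansion of $f^{SHO}_K$ based on $A^{3/2} - A_K^{3/2} = \tfrac{3}{2}A_K^{1/2}(A-A_K) + O((A-A_K)^2)$ combined with the identity $\gamma = 2\zeta^2/3$ yields $f^{SHO}_K(A) = \zeta A_K^{-3/4}(A-A_K) + O((A-A_K)^2)$, so both one-sided derivatives at $A = A_K$ coincide with the rarefaction-branch value $\zeta A_K^{-3/4}$.

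The main obstacle is concavity of the shock branch on $[A_K,\infty)$. My strategy is to rescale with $y = A/A_K$, which puts $f^{SHO}_K(A) = \sqrt{\gamma}\,A_K^{1/4}\,h(y)$ with $h(y) = \sqrt{G(y)}$ and $G(y) = y^{3/2} - y^{1/2} - 1 + y^{-1}$. Since $A_K > 0$ is a positive scaling, concavity of $f^{SHO}_K$ in $A$ on $[A_K,\infty)$ is equivalent to concavity of $h$ in $y$ on $[1,\infty)$. From the identity $h'' = (2GG'' - (G')^2)/(4G^{3/2})$ together with $G > 0$ on $(1,\infty)$, the problem reduces to the single inequality $2G(y)G''(y) \leq (G'(y))^2$ for $y \geq 1$. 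I would then substitute $y = s^2$ to clear half-integer powers and multiply through by the positive common denominator, producing a polynomial inequality $N(s) \leq 0$ on $s \geq 1$ where $N$ is a degree-$10$ polynomial with $N(1) = N'(1) = N''(1) = 0$ (since $G$ and $G'$ both vanish at $y=1$). The hope is that $(s-1)^3$ factors out cleanly and the remaining degree-$7$ cofactor, after extracting an overall sign, has all non-negative coefficients, in the spirit of the $(y^{1/4}-1)^4$-factorization used in the proof of Lemma~\ref{bfe:theorar}.

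The explicit factorization and sign-check of the cofactor on $[1,\infty)$ is the only step I expect to require real computational effort; all other steps are routine. A minor concern is that $G^{3/2}$ in the denominator of $h''$ degenerates at $y=1$, but since a direct Taylor expansion gives $G(y) \sim \tfrac{3}{2}(y-1)^2$ while $2GG'' - (G')^2$ vanishes to order $(y-1)^3$ with a negative leading coefficient, the ratio $h''$ stays bounded and non-positive all the way down to $y=1$, so the concavity argument extends cleanly to the left endpoint of the shock branch and the $C^1$ matching step then delivers global concavity of $f_K$.
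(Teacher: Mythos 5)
Your proposal is correct, and for the hard part it lands on essentially the same computation as the paper: the paper also reduces shock-branch concavity to the sign of a polynomial in $\sqrt{y}$ and factors it as $(y^{1/2}-1)^3$ times a degree-seven polynomial in $y^{1/4}\mapsto$ (here $y^{1/2}$) with coefficients all of one sign --- your cofactor $3s^7+9s^6+16s^5+30s^4+54s^3+56s^2+36s+12$ (with $s=\sqrt{y}$) is exactly the negative of the paper's $b_{num}$ cofactor, so the ``hoped-for'' factorization does go through verbatim. Your rescaling $y=A/A_K$, $f^{SHO}_K=\sqrt{\gamma}\,A_K^{1/4}\sqrt{G(y)}$ with $G(y)=y^{3/2}-y^{1/2}-1+y^{-1}$, is a mildly cleaner route to the same polynomial than the paper's direct differentiation of $f^{SHO}_K$. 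The one genuine difference is that you explicitly verify $C^1$ matching of the two branches at $A=A_K$ (both one-sided derivatives equal $\zeta A_K^{-3/4}$, using $\gamma=2\zeta^2/3$). The paper proves only that each branch of (\ref{bfe:starprobSubFunc}) is separately concave and then asserts the conclusion; since piecewise concavity alone does not yield concavity of a piecewise-defined function without the derivative being non-increasing across the junction, your matching step closes a small but real gap in the published argument, and your remark that $h''$ stays bounded and negative as $y\to 1^{+}$ (both numerator and $G^{3/2}$ vanish to order $(y-1)^3$) is also correct.
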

\begin{proof}
  We first prove that both branch functions in (\ref{bfe:starprobSubFunc}) are concave down. In particular, for the rarefaction branch we have that 
 \begin{equation}
  (f^{RAR}_K)'' = -\frac{3}{4} \frac{\zeta}{ A^{7/4}} < 0\;,\quad \forall A>0\;.
 \end{equation}
Now we focus on the shock branch of the function, noting that for this case $y=\frac{A}{A_K}>1$. First, we find that the second derivative of $f^{SHO}_K$ is
\begin{equation} \label{bfe:shoConcavfsec}
  (f^{SHO}_K)'' = \frac{\gamma}{2}\left(  \frac{\gamma(y-1)(y^{\frac{3}{2}}-1)}{y}  \right)^{-\frac{1}{2}}  b   \;,
\end{equation}
with
\begin{equation}\label{bfe:shoConcavb}
 b =  -\frac{y}{2(y-1)(y^{\frac{3}{2}}-1)} \left( \frac{ \frac{3}{2}y^{\frac{5}{2}}-\frac{1}{2}y^{\frac{3}{2}} -1 }{y^2}\right)^2+  \frac{\frac{3}{4}y^{\frac{5}{2}}+ \frac{1}{4}y^{\frac{3}{2}}+2}{y^3}\;.
\end{equation}
Noting that in (\ref{bfe:shoConcavfsec})
\begin{equation}
 \frac{\gamma}{2}\left(  \frac{\gamma(y-1)(y^{\frac{3}{2}}-1)}{y}  \right)^{-\frac{1}{2}} > 0\;,\quad \forall y>1\;,
\end{equation}
we focus on the sign of $b$. Moreover, we observe that all denominators in $b$ are positive for $y>1$. Then we consider the sign of its numerator, which can be written as
\begin{equation}
 b_{num} = (y^\frac{1}{2}-1)^3(-3y^\frac{7}{2}-9y^3-16y^\frac{5}{2}-30y^2-54y^\frac{3}{2}-56y-36y^\frac{1}{2}-12)\;.
\end{equation}
This is clearly negative for $y>1$. Therefore the claim that function (\ref{bfe:starprob})-(\ref{bfe:starprobSubFunc}) is concave down has been proved.

 \end{proof}
 
 In the remaning part fo this section we propose four new estimates for wave speed bounds, which we call: $TMS_a$, $TMS_b$, $TMS_c$ and $TMS_d$. We now proceed to formulate each one of these estimates and to prove our statements. Before proceeding with the definition of wave speed estimates, we first define some quantities that will be used repeatedly, namely
\begin{equation}
 A_{min}=\min\{A_L,A_R\}\;, \hspace{4mm} A_{max} = \max\{A_L,A_R\}\;
\end{equation}
and
\begin{equation}
 f_{min}=f(A_{min})\;, \hspace{4mm} f_{max} = f(A_{max})\;.
\end{equation}
We also define the following three points:
\begin{equation}                                                 \label{eeTMSc4} 
    P_{m}=(A_{min}, f(A_{min})) \;, \hspace{3mm} P_{M}=(A_{max}, f(A_{max})) \;,  \hspace{3mm} P_{rr}=(A_{*rr}, f(A_{*rr})) \;.
\end{equation}
As for the Euler equations and the shallow water equations, these points will be selectively used for linear interpolation to obtain and approximation of the area $A_{*}$.

\subsection{BFE speed bound estimate: approach $TMS_a$}

This estimate requires up to three evaluations of (\ref{bfe:starprob}). Table \ref{tab:bfeTMSc} summarizes the results presented here.

\begin{theorem} \label{bfe:theoTMSc}
 
\noindent{\bf Method $TMS_a$ for BFE:} The exact left and right 
wave speeds $S_{L}^{Ex}$ and $S_{R}^{Ex}$ are bounded by $S_{L}^{b}$ and $S_{R}^{b}$, computed as follows:
\begin{itemize}
 \item{\bf Case R/R: two rarefaction waves.} In this case we have that  $f_{min}\geq0$. Then we set
 \begin{equation} \label{bfe:tsmcRR}
  S_{L}^{b} = S^{Ex}_L = u_L-c_L\;, \hspace{5mm} S_{R}^{b} = S_{R}^{Ex} = u_R+c_R\;.
 \end{equation}
 \item{\bf Case R/S: left rarefaction and right shock.} The conditions are: $f_{min}<0$, $f_{max}>0$ and $A_{max}=A_L$. Then we compute
     \begin{equation}\label{bfe:tsmcSRLin}
        A_{*mM} = A_{min} - \frac{A_{max}-A_{min}}{f_{max}-f_{min}} f_{min}\;,
    \end{equation}
    and then set
     \begin{equation}\label{bfe:tsmcRS}
        S_{L}^{b} = u_L- c_L\;, \hspace{5mm} S_{R}^{b} = u_R + c_R q_{R}(A_{*mM})\;.
     \end{equation}

 \item{\bf Case S/R: left shock and right rarefaction.} The conditions are:  $f_{min}<0$, $f_{max}>0$ and $A_{max}=A_R$. Then we set
     \begin{equation}\label{bfe:tsmcSR}
       S_{L}^{b} = u_L - c_L q_{L}(A_{*mM}) \;, \hspace{5mm} S_{R}^{b} = u_R + c_R\;.
     \end{equation}

\item{\bf Case S/S: two shock waves.} If $f_{max}\leq0$ we compute
 \begin{equation}\label{bfe:tsmcSSLin}
        A_{*Mrr} = A_{max} -  \frac{A_{*rr}-A_{max}}{f(A_{*rr};A_L,A_R)-f_{max}} f_{max}\;
    \end{equation}
 and set
 \begin{equation}\label{bfe:tsmcSS}
  S_{L}^{b} = u_L - c_L q_{L}(A_{*Mrr})\;, \hspace{5mm} S_{R}^{b} = u_R + c_R q_{R}(A_{*Mrr})\;.
 \end{equation}
\end{itemize}
\end{theorem}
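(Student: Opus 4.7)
The plan is to adapt the four-case decomposition used in the corresponding $TMS_a$ result for the shallow water equations, feeding in the BFE-specific structural ingredients established just above: the monotone-increasing shock-speed function $q_K$ (Lemma \ref{bfe:theoshockspeed}), the concave-down character of $f$ (Lemma \ref{bfe:theoConcaveDown}), and the two-rarefaction upper bound $A_{*rr}\ge A_*$ implied by Lemma \ref{bfe:theorar} together with the monotone-increasing character of $f$. The latter is not stated as a dedicated lemma in the paper but is easily verified from the two branches in (\ref{bfe:starprobSubFunc}), both of which are increasing on their respective domains, so I will record it as a preliminary remark.

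The R/R case is immediate: the hypothesis $f_{min}\ge 0$ forces both nonlinear waves to be rarefactions, so $S_L^{Ex}=u_L-c_L$ and $S_R^{Ex}=u_R+c_R$, which is exactly (\ref{bfe:tsmcRR}). For the R/S and S/R cases, the rarefaction side of the estimate already coincides with the exact head speed, so the only real work is to verify that the secant-based quantity $A_{*mM}$ in (\ref{bfe:tsmcSRLin}) satisfies $A_{*mM}\ge A_*$. I would establish this by a short concavity argument: since $f$ is concave down on $[A_{min},A_{max}]$ and the endpoint values $f_{min}<0<f_{max}$ bracket the unique root $A_*$, the chord joining $P_m$ to $P_M$ lies on or below the graph of $f$ on this interval; evaluating at the chord's zero $A_{*mM}$ therefore yields $f(A_{*mM})\ge 0$, and monotonicity of $f$ then forces $A_{*mM}\ge A_*$. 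Combined with Lemma \ref{bfe:theoshockspeed} this gives $q_R(A_{*mM})\ge q_R(A_*)$ and hence $S_R^b\ge S_R^{Ex}$ for R/S; the S/R case is entirely symmetric and yields $S_L^b\le S_L^{Ex}$.

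The S/S case is where the two-rarefaction bound enters nontrivially. Here $f_{max}\le 0$, so the true root satisfies $A_*\ge A_{max}$, while Lemma \ref{bfe:theorar} combined with monotonicity of $f$ guarantees $A_{*rr}\ge A_*$ and consequently $f(A_{*rr})\ge 0$. The same secant principle as before then applies on the interval $[A_{max},A_{*rr}]$: concavity of $f$ places the chord through $P_M$ and $P_{rr}$ below the graph, its zero $A_{*Mrr}$ satisfies $f(A_{*Mrr})\ge 0$, and monotonicity closes the argument with $A_{*Mrr}\ge A_*$. Applying Lemma \ref{bfe:theoshockspeed} to both $q_L$ and $q_R$ then delivers (\ref{bfe:tsmcSS}).

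The main obstacle I anticipate is not any individual case but the clean packaging of the repeated argument ``a concave-down function with endpoint values of opposite sign has a secant whose zero overshoots the true root from above''. I plan to isolate this once as a single auxiliary observation and invoke it in all three nontrivial cases, rather than reproducing essentially identical bookkeeping three times. Beyond this, no genuinely new ideas beyond those already present in the SWE $TMS_a$ proof are required; the distinctive BFE algebra (the $(y^{3/2}-1)y/(y-1)$ factors in $q_K$ and the quartic relation between $c$ and $A$) is fully absorbed into the three structural lemmas and so does not reappear in the proof.
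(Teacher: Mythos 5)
Your proposal is correct and follows essentially the same route as the paper's proof: identical four-case decomposition, with the R/R case trivially exact, the shock-side estimates justified by monotonicity of $q_K$ (Lemma \ref{bfe:theoshockspeed}), concavity of $f$ (Lemma \ref{bfe:theoConcaveDown}), and, for S/S, the two-rarefaction bound from Lemma \ref{bfe:theorar}. You actually supply more detail than the paper does, since the paper merely cites the three lemmas while you spell out the chord-below-the-graph argument showing $f(A_{*mM})\ge 0$ and hence $A_{*mM}\ge A_*$ (and likewise for $A_{*Mrr}$), together with the implicit monotone-increasing character of $f$ that this step requires.
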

\begin{proof}

We consider the four possible wave configurations separately.

\paragraph{$TMS_a$ - Proof for case of left rarefaction/right rarefaction (R/R)}
 
Wave speed estimates (\ref{bfe:tsmcRR}) are identical to the exact wave speeds for this wave configuration.

\paragraph{$TMS_a$ - Proof for case of left rarefaction/right shock (R/S)}
 
Wave speed estimates in (\ref{bfe:tsmcRS}) for $S^b_L$ is identical to the exact wave speed for this wave type.
Wave speed estimate in (\ref{bfe:tsmcRS}) for $S^b_R$ is computed with the same expression used for the exact wave speed for this wave type, see (\ref{bfe:15}), but using $A_{*mM}$ instead of $A_*$. The resulting estimate is a bound since for this wave configuration we have that (\ref{bfe:15}) is monotone increasing in $A$ (from Lemma \ref{bfe:theoshockspeed}) and $f(A;A_L,A_R)$ is concave down (from Lemma \ref{bfe:theoConcaveDown}).
 
\paragraph{$TMS_a$ - Proof for case of left shock/right rarefaction (S/R)}
 
Proving that expressions in (\ref{bfe:tsmcSR}) are bounds is entirely analogous to the previous case.
 
\paragraph{$TMS_a$ - Proof for case of left shock/right shock (S/S)}

Wave speed estimates in (\ref{bfe:tsmcRS}) are computed with the same expression used for the exact wave speeds for this wave type, see (\ref{bfe:15}) , but using $A_{*Mrr}$ instead of $A_*$. The resulting estimates are bounds since for this wave configuration we have that:  $A_{*rr}>A_*$ (from Lemma \ref{bfe:theorar}), (\ref{bfe:15}) is monotone increasing in $A$ (from Lemma \ref{bfe:theoshockspeed}) and $f(A;A_L,A_R)$ is concave down (from Lemma \ref{bfe:theoConcaveDown}).
 
\end{proof}

\vspace{2mm}
\begin{table}
 
\begin{center}
\begin{tabular}{|c|c|c|c|} \hline
Wave pattern   &   Conditions       &    $S_{L}^{b}$     &   $S_{R}^{b}$  \\ \hline
$R/R$   &  $f(A_{min})\ge 0$     &    $u_{L} - c_{L}$                    &   $u_{R} + c_{R}  $  \\ \hline 
$R/S$   &  $f(A_{min}) < 0$ \;, $f(A_{max}) > 0$ \;, $A_{min}=A_{R}$ &  $u_{L} -  c_{L}$  &  $ u_{R} +  c_{R}q_{R}(A_{*mM}) $ \\ \hline
$S/R$ &  $f(A_{min}) < 0$ \;, $f(A_{max}) > 0$ \;, $A_{min}=A_{L}$    &  $ u_{L} -  c_{L}q_{L}(A_{*mM}) $  & $  u_{R} +  c_{R} $ \\ \hline
$S/S$  &  $f(A_{max}) < 0$  &   $  u_{L} -  c_{L}q_{L}(A_{*Mrr})   $           &$   u_{R} + c_{R}q_{R}(A_{*Mrr})  $  \\ \hline 
\end{tabular}       
\caption{$TMS_a$ bound estimates $S_{L}^{b}$ and $S_{R}^{b}$ on minimal and maximal wave speeds for the blood flow equations (\ref{bfe:cons}). Function $q_{K}(A)$ ($K=L, R$) given in Eqs. (\ref{bfe:13})-(\ref{bfe:15}). Value $A_{*mM}$ is given in Eq. (\ref{bfe:tsmcSRLin}) and value $A_{*Mrr}$ is given in Eq. (\ref{bfe:tsmcSSLin}).}\label{tab:bfeTMSc}
\end{center}

\end{table}
\vspace{0.2cm}

\subsection{BFE speed bound estimate: approach $TMS_b$}

This estimate is similar to $TMS_a$. It requires up to three evaluations of function (\ref{bfe:starprob}). Table \ref{tab:bfeTMSd} summarizes the results presented here.

\begin{theorem} \label{bfe:theoTMSd}
 
\noindent{\bf Method $TMS_b$ for BFE:} The exact left and right 
wave speeds $S_{L}^{Ex}$ and $S_{R}^{Ex}$ are bounded by $S_{L}^{b}$ and $S_{R}^{b}$, computed as follows:
\begin{itemize}
    \item{\bf Case R/R: two rarefaction waves.} In this case we have that  $f_{min}\geq0$. Then we set
    \begin{equation} \label{bfe:tsmdRR}
    S_{L}^{b} = S^{Ex}_L = u_L-c_L\;, \hspace{5mm} S_{R}^{b} = S_{R}^{Ex} = u_R+c_R\;.
    \end{equation}
    \item{\bf Case R/S: left rarefaction and right shock.} The conditions are: $f_{min}<0$, $f_{max}>0$ and $A_{max}=A_L$. Then we compute
    \begin{equation}\label{bfe:tsmdLin}
        A_{*mrr} = A_{min} -  \frac{A_{*rr}-A_{min}}{f(A_{*rr};A_L,A_R)-f_{min}} f_{min}\;.
    \end{equation}
    Then we set
    \begin{equation}\label{bfe:tsmdRS}
        S_{L}^{b} = u_L- c_L\;, \hspace{5mm} S_{R}^{b} = u_R + c_R q_{R}(A_{*mrr})\;.
    \end{equation}
    \item{\bf Case S/R: left shock and right rarefaction.} The conditions are:  $f_{min}<0$, $f_{max}>0$ and $A_{max}=A_R$. Then we set
    
    \begin{equation}\label{bfe:tsmdSR}
    S_{L}^{b} = u_L - c_L q_{L}(A_{*mrr}) \;, \hspace{5mm} S_{R}^{b} = u_R + c_R\;.
    \end{equation}
        
\item{\bf Case S/S: two shock waves.} If $f_{max}\leq0$ we set
 \begin{equation}\label{bfe:tsmcSS}
  S_{L}^{b} = u_L - c_L q_{L}(A_{*mrr})\;, \hspace{5mm} S_{R}^{b} = u_R + c_R q_{R}(A_{*mrr})\;.
 \end{equation}
\end{itemize}
\end{theorem}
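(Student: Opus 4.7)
The plan is to imitate, case by case, the four-configuration argument used in the proof of Theorem~\ref{bfe:theoTMSc} (method $TMS_a$ for BFE), replacing the role of the point $P_M$ by the point $P_{rr}$ in every linear interpolation. The R/R case is immediate: the estimates in (\ref{bfe:tsmdRR}) coincide with the exact left and right speeds read off from Lemma~\ref{bfeWaveRelations}.

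For each of the other three configurations the entire argument reduces to a single claim: the interpolated area $A_{*mrr}$ defined by (\ref{bfe:tsmdLin}) satisfies $A_{*mrr} \geq A_*$, where $A_*$ is the exact root of (\ref{bfe:equation}). I would establish this as follows. In the R/S, S/R and S/S cases one has $f_{min} < 0$, and Lemma~\ref{bfe:theorar} gives $f(A_{*rr}) \geq f^{RAR}(A_{*rr}) = 0$; combined with the monotonicity of $f$ this forces $A_* \in [A_{min}, A_{*rr}]$. Concavity of $f$ on that interval (Lemma~\ref{bfe:theoConcaveDown}) implies that the chord through $P_m$ and $P_{rr}$ lies weakly below the graph of $f$. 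Since both functions start at the common negative value $f_{min}$ at $A_{min}$ and rise monotonically, $f$ reaches zero before the chord does, hence $A_* \leq A_{*mrr}$.

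Once $A_{*mrr} \geq A_*$ is available, Lemma~\ref{bfe:theoshockspeed} (monotone increase of $q_L$ and $q_R$ in their area argument) yields $q_K(A_{*mrr}) \geq q_K(A_*)$ on every shock side. Combined with the fact that the rarefaction side of a mixed configuration produces $u_L - c_L$ or $u_R + c_R$ exactly, this delivers the required inequalities $S_L^b \leq S_L^{Ex}$ and $S_R^b \geq S_R^{Ex}$ in the R/S, S/R and S/S cases.

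The most delicate point I expect to encounter is the S/S configuration, where $A_*$ lies above $A_{max}$ while the chord is anchored at the possibly much smaller value $A_{min}$; the argument still works only because Lemma~\ref{bfe:theorar} keeps $A_*$ bounded above by $A_{*rr}$, so the interval on which concavity is invoked really does contain $A_*$. A minor auxiliary check, which I would include for completeness, is that the criterion $A_{max} = A_L$ (respectively $A_{max} = A_R$) together with $f_{min} < 0 < f_{max}$ genuinely identifies the R/S (respectively S/R) pattern rather than its mirror.
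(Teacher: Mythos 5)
Your proof is correct and follows essentially the same route as the paper's: a case-by-case argument resting on Lemma~\ref{bfe:theorar} ($A_{*rr}\ge A_*$), Lemma~\ref{bfe:theoshockspeed} (monotonicity of $q_K$) and Lemma~\ref{bfe:theoConcaveDown} (concavity of $f$). In fact you supply the one step the paper leaves implicit, namely the chord argument showing that concavity of $f$ on $[A_{min},A_{*rr}]$ forces the secant root $A_{*mrr}$ to satisfy $A_{*mrr}\ge A_*$, so your write-up is if anything more complete than the published proof.
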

\begin{proof}

We consider the four possible wave configurations separately.

\paragraph{$TMS_b$ - Proof for case of left rarefaction/right rarefaction (R/R)}
 
Wave speed estimates (\ref{bfe:tsmdRR}) are identical to the exact wave speeds for this wave configuration.

\paragraph{$TMS_b$ - Proof for case of left rarefaction/right shock (R/S)}
 
Wave speed estimates in (\ref{bfe:tsmdRS}) for $S^b_L$ is identical to the exact wave speed for this wave type.
Wave speed estimate in (\ref{bfe:tsmdRS}) for $S^b_R$ is computed with the same expression used for the exact wave speed for this wave type, see (\ref{bfe:15}) , but using $A_{*mrr}$ instead of $A_*$. The resulting estimate is a bound since for this wave configuration we have that: $A_{*rr}>A_*$ (from Lemma \ref{bfe:theorar}), (\ref{bfe:15}) is monotone increasing in $A$ (from Lemma \ref{bfe:theoshockspeed}) and $f(A;A_L,A_R)$ is concave down (from Lemma \ref{bfe:theoConcaveDown}).
 
\paragraph{$TMS_b$ - Proof for case of left shock/right rarefaction (S/R)}
 
Proving that expressions in (\ref{bfe:tsmdSR}) are bounds is entirely analogous to the previous case.
 
\paragraph{$TMS_b$ - Proof for case of left shock/right shock (S/S)}

Wave speed estimates in (\ref{bfe:tsmdRS}) are computed with the same expression used for the exact wave speeds for this wave type, see (\ref{bfe:15}) , but using $A_{*mrr}$ instead of $A_*$. The resulting estimates are bounds since for this wave configuration we have that:  $A_{*rr}>A_*$ (from Lemma \ref{bfe:theorar}), (\ref{bfe:15}) is monotone increasing in $A$ (from Lemma \ref{bfe:theoshockspeed}) and $f(A;A_L,A_R)$ is concave down (from Lemma \ref{bfe:theoConcaveDown}).
\end{proof}
\begin{table}
\begin{center}
\begin{tabular}{|c|c|c|c|} \hline
Wave pattern   &   Conditions       &    $S_{L}^{b}$     &   $S_{R}^{b}$  \\ \hline
$R/R$   &  $f(A_{min})\ge 0$     &    $u_{L} - c_{L}$                    &   $u_{R} + c_{R}  $  \\ \hline 
$R/S$   &  $f(A_{min}) < 0$ \;, $f(A_{max}) > 0$ \;, $A_{min}=A_{R}$ &  $u_{L} -  c_{L}$  &  $ u_{R} +  c_{R}q_{R}(A_{*mrr}) $ \\ \hline
$S/R$ &  $f(A_{min}) < 0$ \;, $f(A_{max}) > 0$ \;, $A_{min}=A_{L}$    &  $ u_{L} -  c_{L}q_{L}(A_{*mrr}) $  & $  u_{R} +  c_{R} $ \\ \hline
$S/S$  &  $f(A_{max}) < 0$  &   $  u_{L} -  c_{L}q_{L}(A_{*mrr})   $           &$   u_{R} + c_{R}q_{R}(A_{*mrr})  $  \\ \hline 
\end{tabular}       
\caption{$TMS_b$ bound estimates $S_{L}^{b}$ and $S_{R}^{b}$ on minimal and maximal wave speeds for the blood flow equations (\ref{bfe:cons}). Function $q_{K}(A)$ ($K=L, R$) given in Eqs. (\ref{bfe:13})-(\ref{bfe:15}). Value  $A_{*mrr}$ is given in Eq. (\ref{bfe:tsmdLin}).}\label{tab:bfeTMSd}
\end{center}
\end{table}

\subsection{BFE wave bound estimate: approach $TMS_c$}

This estimate requires two evaluations of function (\ref{bfe:starprob}) in order to determine the wave pattern configuration related to a given Riemann problem. Table \ref{tab:bfeTMSb} summarizes the results presented here. 

\begin{theorem} \label{bfe:theoTMSb}
 
\noindent{\bf Method $TMS_c$ for BFE:} The exact left and right 
wave speeds $S_{L}^{Ex}$ and $S_{R}^{Ex}$ are bounded by $S_{L}^{b}$ and $S_{R}^{b}$, computed as follows:
\begin{itemize}
 \item{\bf Case R/R: two rarefaction waves.} In this case we have that  $f_{min}\geq0$. Then we set 
 \begin{equation} \label{bfe:tsmbRR}
  S_{L}^{b} = S^{Ex}_L = u_L-c_L\;, \hspace{5mm} S_{R}^{b} = S_{R}^{Ex} = u_R+c_R\;.
 \end{equation}
 \item{\bf Case R/S: left rarefaction and right shock.} The conditions are: $f_{min}<0$, $f_{max}>0$ and $A_{max}=A_L$. Then we consider
     \begin{equation}\label{bfe:tsmbRS}
        S_{L}^{b} = u_L- c_L\;, \hspace{5mm} S_{R}^{b} = u_R + c_R q_{R}(A_L)\;.
     \end{equation}
 \item{\bf Case S/R: left shock and right rarefaction.} The conditions are: $f_{min}<0$, $f_{max}>0$ and $A_{max}=A_R$. Then one sets   
     
     \begin{equation}\label{bfe:tsmbSR}
       S_{L}^{b} = u_L - c_L q_{L}(A_R) \;, \hspace{5mm} S_{R}^{b} = u_R + c_R\;.
     \end{equation}
\item{\bf Case S/S: two shock waves.} If $f_{max}\leq0$ one sets
 \begin{equation}\label{bfe:tsmbSS}
  S_{L}^{b} = u_R- c_R\;, \hspace{5mm} S_{R}^{b} = u_L + c_L\;.
 \end{equation}
 
\end{itemize}
\end{theorem}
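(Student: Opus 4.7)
The plan is to handle the four wave configurations separately, mirroring the structure of the $TMS_c$ proof for the shallow water equations (Theorem \ref{swe:theoTMSc}) and exploiting the preparatory results in Lemmas \ref{bfe:theorar}, \ref{bfe:theoshockspeed}, and \ref{bfe:theoConcaveDown}. The R/R case is immediate: the bound formulas (\ref{bfe:tsmbRR}) coincide with the exact wave speeds obtained by evaluating the eigenvalues (\ref{bfe:eigs}) on the data.

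For R/S, the left rarefaction gives $S_L^{Ex}=u_L-c_L=S_L^b$ exactly. The right wave is a shock, so $S_R^{Ex}=u_R+c_R q_R(A_*)$ with $A_*>A_R$, while the left rarefaction forces $A_*<A_L$; combined with the hypothesis $A_{max}=A_L$ this brackets $A_R<A_*<A_L$. The monotonicity of $q_R$ from Lemma \ref{bfe:theoshockspeed} then yields $q_R(A_*)\le q_R(A_L)$, hence $S_R^{Ex}\le S_R^b$. The S/R case is the mirror image of R/S and follows from the same monotonicity argument applied to $q_L$ on the bracketed interval $A_L<A_*<A_R$.

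The S/S case is the main obstacle, since both shocks force $A_*>\max\{A_L,A_R\}$ without an a~priori upper bracket, so the monotonicity trick no longer suffices. The strategy, following the S/S part of Theorem \ref{swe:theoTMSc}, is to invoke the Lax entropy condition $u_*+c_*>S_R^{Ex}$ and replace the target $S_R^b=u_L+c_L\ge S_R^{Ex}$ by the stronger inequality $u_L+c_L\ge u_*+c_*$. Substituting the left Rankine-Hugoniot relation (\ref{bfe:12}), the definition $c(A)=\zeta A^{1/4}$, and the identity $\zeta^2/\gamma=3/2$, and setting $y=A_*/A_L>1$, this reduces to the purely algebraic inequality
\begin{equation*}
  \sqrt{\tfrac{2}{3}(y-1)(y^{3/2}-1)/y}\,\ge\,y^{1/4}-1.
\end{equation*}
Both sides are non-negative for $y\ge 1$, so squaring is legitimate; with the substitution $z=y^{1/4}$ and the factorizations $z^4-1=(z-1)(z+1)(z^2+1)$ and $z^6-1=(z-1)(z+1)(z^2-z+1)(z^2+z+1)$, the common factor $(z-1)^2$ cancels and the claim reduces to
\begin{equation*}
  \tfrac{2}{3}(z+1)^2(z^2+1)(z^2-z+1)(z^2+z+1)\,\ge\,z^4 \qquad (z\ge 1),
\end{equation*}
which is immediate since the left-hand side equals $16$ at $z=1$ and grows like $\tfrac{2}{3}z^8$ as $z\to\infty$ against the right-hand side $z^4$. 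The symmetric computation with the right shock (using $y=A_*/A_R$ and $u_*=u_R+f_R$) yields the bound $S_L^b=u_R-c_R\le S_L^{Ex}$ via precisely the same algebraic inequality, completing the proof.
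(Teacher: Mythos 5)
Your proof is correct and follows essentially the same route as the paper's: exactness for R/R, monotonicity of $q_K$ (Lemma \ref{bfe:theoshockspeed}) together with the bracketing $A_{min}<A_{*}<A_{max}$ for the mixed cases, and the Lax entropy condition combined with the left-shock Rankine--Hugoniot relation reducing the S/S case to the same algebraic inequality in $y=A_{*}/A_L$. The only cosmetic difference is the final algebra: the paper factors out $(y^{1/4}-1)^2$ and displays an explicitly positive cofactor, whereas you cancel $(z-1)^2$ after setting $z=y^{1/4}$; your justification of the residual polynomial inequality via its value at $z=1$ and its growth at infinity is slightly informal but trivially tightened, since the expanded left-hand side has only positive coefficients and its $z^4$ coefficient already exceeds $1$.
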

\begin{proof}

We consider the four possible wave configurations separately.

\paragraph{$TMS_c$ - Proof for case of left rarefaction/right rarefaction (R/R)}
 
Wave speed estimates (\ref{bfe:tsmbRR}) are identical to the exact wave speeds for this wave configuration.

\paragraph{$TMS_c$ - Proof for case of left rarefaction/right shock (R/S)}
 
Wave speed estimates in (\ref{bfe:tsmbRS}) for $S^b_L$ is identical to the exact wave speed for this wave type.
Wave speed estimate in (\ref{bfe:tsmbRS}) for $S^b_R$ is computed with the same expression used for the exact wave speed for this wave type, see (\ref{bfe:15}) , but using $A_L$ instead of $A_*$. The resulting estimate is a bound since for this wave configuration $A_L>A_*$ and (\ref{bfe:15}) is monotone increasing in $A$ (see Lemma \ref{bfe:theoshockspeed}).
 
\paragraph{$TMS_c$ - Proof for case of left shock/right rarefaction (S/R)}
 
Proving that expressions in (\ref{bfe:tsmbSR}) are bounds is entirely analogous to the previous case. 
 
\paragraph{$TMS_c$ - Proof for case of left shock/right shock (S/S)}
 
The Lax entropy condition ensures that 
\begin{equation}                                                 \label{bfe:28} 
		u_{*} + c_{*} > S_{R}^{Ex}\;.
\end{equation}
The task is to find a bound for the characteristic speed $u_{*} + c_{*} $ in the start region.  Recalling our hypothesised bound, we want to prove that
\begin{equation}                                                 \label{bfe:29} 
		S_{R}^{b} = u_{L} + c_{L}  \ge u_{*} + c_{*}   \;.
\end{equation}
Here we recall  (\ref{bfe:12}) for a left shock wave
\begin{equation}                                                 \label{bfe:30}
		u_{*} = u_{L} -  \sqrt{\frac{\gamma (A_{*}-A_{L})(A_{*}^{3/2}-A_{L}^{3/2})}{A_{L}A_{*}}} = u_L - \left( \gamma  \frac{ (y-1)  (y^\frac{3}{2}-1)}{y} \right)^{\frac{1}{2}}  A^\frac{1}{4}_L\;,
		\hspace{3mm}  y \equiv \frac{A^{*}}{A_{L}}\;.
\end{equation}
Substitution of  $u_{*}$  into (\ref{bfe:29}) gives
\begin{equation}                                                 \label{bfe:31a} 
		c_L \ge -  \left( \gamma  \frac{ (y-1)  (y^\frac{3}{2}-1)}{y} \right)^{\frac{1}{2}}  A^\frac{1}{4}_L + c\;.
\end{equation}
Recalling that $c=\zeta A^\frac{1}{4}$ and $\gamma/\zeta^2=2/3$, after some algebraic manipulations we obtain
\begin{equation}                                                 \label{bfe:31} 
		y^{\frac{1}{4}} - \left[\frac{2}{3}\frac{(y-1)(y^\frac{3}{2} - 1 )}{y}\right]^{\frac{1}{2}}   \le 1 \;,
		\hspace{3mm}  y \equiv \frac{A^{*}}{A_{L}} \;.
\end{equation}
which can be written as
\begin{equation}
 y^\frac{1}{4} -1 \leq \left[\frac{2}{3}\frac{(y-1)(y^\frac{3}{2} - 1 )}{y}\right]^{\frac{1}{2}}\;.
\end{equation}
Since we are studying a S/S condition we have that $y>1$, which means that the left-hand side of the above inequality is always positive. We also note that the inequality holds for every $y \leq 1$. We then further manipulate the expression, obtaining
\begin{equation}
 (y-1)(y^\frac{3}{2}-1)-\frac{3}{2}y(y^\frac{1}{4}-1)^2 \geq 0\;,
\end{equation}
which can be factored to
\begin{equation}
 \frac{1}{2}(y^\frac{1}{4}-1)^2(2+4y^\frac{1}{4}+6y^\frac{1}{2}+8y^\frac{3}{4}+5y+8y^\frac{5}{4}+6y^\frac{3}{2}+4y^\frac{7}{4}+2y^2) \geq 0\,,
\end{equation}
which clearly holds for $y\geq0$. Hence, we have that expression $ S^b_R$ in (\ref{bfe:tsmbSS}) clearly results in a bound. The proof for $S^b_L$ is analogous to the one performed for $S^b_R$ and is thus omitted. With this last case we have covered all four possible configurations and the proof is complete.
 
\end{proof}

\vspace{2mm}
\begin{table}
 
\begin{center}
\begin{tabular}{|c|c|c|c|} \hline
Wave pattern   &   Conditions       &    $S_{L}^{b}$     &   $S_{R}^{b}$  \\ \hline
$R/R$   &  $f(A_{min})\ge 0$     &    $u_{L} - c_{L}$                    &   $u_{R} + c_{R}  $  \\ \hline 
$R/S$   &  $f(A_{min}) < 0$ \;, $f(A_{max}) > 0$ \;, $A_{min}=A_{R}$ &  $u_{L} -  c_{L}$  &  $ u_{R} +  c_{R}q_{R}(A_{L}) $ \\ \hline
$S/R$ &  $f(A_{min}) < 0$ \;, $f(A_{max}) > 0$ \;, $A_{min}=A_{L}$    &  $ u_{L} -  c_{L}q_{L}(A_{R}) $  & $  u_{R} +  c_{R} $ \\ \hline
$S/S$  &  $f(A_{max}) < 0$  &   $  u_{R} -  c_{R}   $           &$   u_{L} + c_{L}$  \\ \hline 
\end{tabular}       
\caption{$TMS_c$ bound estimates $S_{L}^{b}$ and $S_{R}^{b}$ on minimal and maximal wave speeds for the blood flow equations (\ref{bfe:cons}). Function $q_{K}(A)$ ($K=L, R$) given in Eqs. (\ref{bfe:13})-(\ref{bfe:15}). }\label{tab:bfeTMSb}
\end{center}

\end{table}
\vspace{0.2cm}

\subsection{BFE speed bound estimate: approach $TMS_d$}

This estimate is a function of left and right state vectors and uses very little information about the Riemann problem.

\begin{theorem}  \label{bfe:theoTMSa}
\noindent{\bf Method $TMS_d$ for BFE:} The exact left and right 
wave speeds $S_{L}^{Ex}$ and $S_{R}^{Ex}$ are bounded by $S_{L}^{b}$ and $S_{R}^{b}$  respectively, with
\begin{equation}                           \label{bfe:16} 
		S_{L}^{b} = \min\{u_{L} - c_{L},u_{R} - \alpha_L c_{R}\} \le S_{L}^{e} \;, \hspace{3mm} \alpha_{L}=4
\end{equation}
and
\begin{equation}                            \label{bfe:17} 
		S_{R}^{b} = \max\{u_{R} + c_{R}, u_{L} + \alpha_{R} c_{L} \} \ge S_{R}^{b}\;, \hspace{3mm} \alpha_{R}=4 \;.
\end{equation}
\end{theorem}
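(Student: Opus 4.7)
The plan is to mirror the structure of Theorem \ref{swe:theoTMSa} for SWE and split the argument into the four admissible wave patterns R/R, R/S, S/R, S/S. The essential difference from the SWE case is that the coefficient of $c$ in the Riemann invariants (\ref{bfe:10})--(\ref{bfe:11}) is $4$ rather than $2$, which is precisely why the constants $\alpha_L = \alpha_R = 4$ appear. The BFE-specific facts I will use are: the exact wave speeds (\ref{EEleftwave}) and (\ref{EErightwave}) in their BFE analogues, the Lax entropy condition $u_* + c_* > S_R^{Ex}$ across a right-moving shock (and the symmetric one on the left), and the left/right Riemann invariants (\ref{bfe:10})--(\ref{bfe:11}).

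For the R/R case the exact speeds are $u_L - c_L$ and $u_R + c_R$; since $\min\{u_L-c_L,u_R-4c_R\} \le u_L-c_L = S_L^{Ex}$ and $\max\{u_R+c_R,u_L+4c_L\} \ge u_R+c_R = S_R^{Ex}$, the bound is immediate. For the R/S case I will only work the right wave (the left wave being a rarefaction gives $S_L^{Ex} = u_L - c_L$, so $S_L^b \le u_L - c_L$ trivially). Using the left Riemann invariant (\ref{bfe:10}) one writes $u_* = u_L + 4(c_L - c_*)$ and therefore the $\lambda_2$ characteristic speed on the star side becomes $u_* + c_* = u_L + 4c_L - 3c_*$. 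Since $c_* > 0$ for admissible data, $u_L + 4c_L \ge u_* + c_*$, and combining with the Lax inequality $u_* + c_* > S_R^{Ex}$ yields $S_R^b \ge u_L + 4c_L > S_R^{Ex}$. The S/R case is symmetric, using (\ref{bfe:11}) instead of (\ref{bfe:10}).

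For the S/S case I will invoke what has already been established in Theorem \ref{bfe:theoTMSb}: the proof of $TMS_c$ in the S/S sub-case showed that $u_L + c_L \ge u_* + c_*$ via the algebraic inequality
\begin{equation}
 y^{1/4} - \sqrt{\tfrac{2}{3}\tfrac{(y-1)(y^{3/2}-1)}{y}} \le 1 \quad \forall y > 0\,,
\end{equation}
which was reduced to the manifestly non-negative factorisation
\begin{equation}
 \tfrac{1}{2}(y^{1/4}-1)^2\bigl(2 + 4y^{1/4} + 6y^{1/2} + 8y^{3/4} + 5y + 8y^{5/4} + 6y^{3/2} + 4y^{7/4} + 2y^2\bigr) \ge 0\,.
\end{equation}
Since $u_L + 4c_L \ge u_L + c_L \ge u_* + c_* > S_R^{Ex}$, the looser bound with $\alpha_R = 4$ holds a fortiori; the left S/S bound is symmetric.

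The only step that is not essentially one line is the R/S derivation, where I need to ensure that the minimum permissible $\alpha_R$ obtained from $4 - 3c_*/c_L \le \alpha_R$ is indeed $4$, attained in the limit $c_* \to 0^+$ (vacuum-like rarefactions), consistently with the choice of $\alpha_L, \alpha_R$ in (\ref{bfe:16})--(\ref{bfe:17}). The main (minor) obstacle is verifying sharpness of the constant in that limit; the S/S case, which would be the natural candidate for difficulty, is disposed of essentially for free by quoting the corresponding calculation from Theorem \ref{bfe:theoTMSb}.
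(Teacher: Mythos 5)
Your proposal is correct and follows essentially the same route as the paper: the R/R case is immediate, the R/S and S/R cases combine the Lax entropy condition with the Riemann invariants (\ref{bfe:10})--(\ref{bfe:11}) to reduce the claim to $4 - 3y^{1/4} \le \alpha_R$ for $0 < y \le 1$ (sharp only in the vacuum limit), and the S/S case is delegated to the inequality $u_L + c_L \ge u_* + c_*$ already established in the proof of Theorem \ref{bfe:theoTMSb}, which the looser $\alpha_R = 4$ bound inherits a fortiori. No gaps.
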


\begin{proof}

We consider the four possible wave configurations separately.

\paragraph{$TMS_d$ - Proof for case of left rarefaction/right rarefaction (R/R)}

The proof in this case is trivial. If the exact solution consists of two rarefaction waves, then the exact wave speeds will be
\begin{equation}
 S_{L}^{Ex} = u_L-c_L\;, \hspace{5mm} S_{R}^{Ex} = u_R+c_R\;.
\end{equation}

Clearly (\ref{bfe:16}) and (\ref{bfe:17}) are bounds to these wave speeds.

\paragraph{$TMS_d$ - Proof for case of left rarefaction/right shock (R/S)} 

Application of the Lax entropy condition gives
\begin{equation}                             \label{bfe:18} 
		u_{*} + c_{*} > S_{R}^{Ex}\;.
\end{equation}
The task is to find a bound for the characteristic speed $u_{*} + c_{*} $ in the start region.  We seek a bound of the form
\begin{equation}                              \label{bfe:19} 
		S_{R}^{b} = u_{L} + \alpha_{R} c_{L}  \ge u_{*} + c_{*}   \;,
\end{equation}
with $\alpha_{R}=4$ yet to be justified. From the left Riemann invariants (\ref{bfe:10}) we may write
\begin{equation}                               \label{bfe:20} 
		u_{*} = u_{L} - 4 (c_{*}-c_{L}) \;
\end{equation}
and thus the characteristic speed becomes
\begin{equation}                                \label{bfe:21} 
	 u_{*} +c_{*} = u_{L} + 4 c_{L} - 3c_{*} \;.
\end{equation}
By imposing (\ref{bfe:19}) we have
\begin{equation}                                \label{bfe:22} 
		u_{L} +4 c_{L} - 3c_{*}  \le u_{L} + \alpha_{R} c_{L} \;.
\end{equation}
Simple manipulations lead to
\begin{equation}                                \label{bfe:23} 
		4 - 3 y^\frac{1}{4} \le \alpha_{R}\;, \hspace{3mm}  y=\frac{A_{*}}{A_{L}} \;.
\end{equation}
Since the left wave is a rarefaction, we have that $0 < y \le 1$. The most restrictive case is $\alpha_R=4$ and the result follows.\\

\paragraph{$TMS_d$ - Proof for case of left shock / right rarefaction (S/R)} 
The proof for this configuration is analogous to the previous one for the R/S configuration and it is thus omitted.
\paragraph{$TMS_d$ - Proof for case of left shock / right shock (S/S)} 
The proof is analogous to the one proposed for this wave configuration in Theorem (\ref{bfe:theoTMSb}) and it is thus omitted here. 
\end{proof}

\subsection{Numerical tests for the blood flow equations}

Six Riemann problems have been selected to test the performance of the existing and newly proposed wave speed estimates for the blood flow equations. Table  \ref{Tab:BFE_Tests_IC} shows the initial condition in terms of primitive variables. For all tests problems we have considered a blood vessel with the following properties: $\rho=1.05\;g/cm^3$; $\beta=28209.4792\;dyne/cm^3$. Exact solution for cross-sectional area $A_{*}$ and velocity $u_{*}$ are shown in columns $6$ and $7$ respectively, while the emerging wave pattern  for each test is shown in the last column.

Tables \ref{Tab:BFE_Tests_SR} and \ref{Tab:BFE_Tests_SL} show results for the maximal and minimal wave speeds respectively. The numerical results confirm that all proposed estimates constitute bounds for the extreme wave speeds, for the chosen tests. Moreover, we observe that estimates (\ref{Davis:a}) and (\ref{Davis:b}) fail for some tests, with larger errors for shock waves. Bounds that rely heavily on the two-rarefaction solution perform very well, starting to deviate significantly from exact values when strong shock waves are present, as for example in test 3. This behaviour is expected. Regarding the simpler bounds $TMS_c$ and $TMS_d$  proposed here, we observe that $TMS_c$ gives more accurate estimates than $TMS_d$ for almost all cases, except for Test 3, where the shock speed is grossly overestimated. In general, $TMS_a$ and $TMS_b$ perform better than other estimates. In Test 3 one can see the improvement resulting from using a linear interpolation and not just the two-rarefaction solution when comparing $To-$ and $GP-$ type estimates with $TMS_b$.
\begin{table}[h!]    
\begin{center}
\vspace{1mm}
\begin{tabular}{|c|c|c|c|c|c|c|c|} \hline
Test &  $A_{L}$  & $u_{L}$  & $A_{R}$  & $u_{R}$ & $A_{*}$  & $u_{*}$& wave pattern  \\ \hline
1	&	3.1416	&	0.0000	&	2.8274	&	0.0000	&	2.9814	&	8.0234	&	rar-shock	\\ \hline
2	&	3.1416	&	0.0000	&	0.6283	&	0.0000	&	1.4936	&	104.6900	&	rar-shock	\\ \hline
3	&	0.0031	&	0.0000	&	3.1416	&	0.0000	&	0.1220	&	-343.2400	&	shock-rar	\\ \hline
4	&	3.1416	&	10.0000	&	3.1416	&	-20.0000	&	3.4581	&	-5.0000	&	shock-shock	\\ \hline
5	&	3.1416	&	100.0000	&	2.5133	&	-200.0000	&	6.6064	&	-30.6180	&	shock-shock	\\ \hline
6	&	3.1416	&	-10.0000	&	3.1416	&	20.0000	&	2.8471	&	5.0000	&	rar-rar	\\ \hline
\end{tabular}     
\caption{Initial conditions for all six Riemann problems for the blood flow equations. Exact solution for cross-sectional area $A_{*}$ and velocity $u_{*}$ are shown in columns $6$ and $7$ respectively.  The emerging wave pattern  for each test is shown in the last column. Units are: $[A]= cm^2$, $[u]=cm/s$.}
\label{Tab:BFE_Tests_IC}     
\end{center}
\end{table}

\vspace{0.1cm}

\begin{table}[h!]
\begin{center}
\footnotesize
\vspace{1mm}
\begin{tabular}{|c|c|c|c|c|c|c|c|c|c|} \hline
Test & $S^{Ex}_{R}$  & $S^{Dav_a}_{R}$ & $S^{Dav_b}_{R}$   & $S^{To}_{R}$ & $S^{GP}_R$& ${S}^{TMS_a}_{R}$  & ${S}^{TMS_b}_{R}$ & ${S}^{TMS_c}_{R}$ & ${S}^{TMS_d}_{R}$\\ \hline
	
1	&	155.3674	&	\textcolor{red}{\bf 150.292}	&	\textcolor{red}{\bf 154.3033}	&	155.3678	&	155.3678	&	155.4675	&	155.3674	&	160.5683	&	616.7498	\\ \hline
2	&	180.7129	&	\textcolor{red}{\bf 103.1889}	&	\textcolor{red}{\bf154.3033}	&	183.0477	&	183.0477	&	203.5859	&	181.2323	&	300.5546	&	616.7498	\\ \hline
3	&	154.3033	&	154.3033	&	154.3033	&	154.3033	&	154.3033	&	154.3033	&	154.3033	&	154.3033	&	154.3033	\\ \hline
4	&	143.8834	&	\textcolor{red}{\bf134.3033}	&	164.3033	&	143.8893	&	143.8893	&	143.8836	&	143.8836	&	164.3033	&	626.7498	\\ \hline
5	&	73.3884	&	\textcolor{red}{\bf-54.0689}	&	254.3033	&	80.5980	&	80.5980	&	74.5808	&	74.9150	&	254.3033	&	716.7498	\\ \hline
6	&	174.3033	&	174.3033	&	174.3033	&	174.3033	&	174.3033	&	174.3033	&	174.3033	&	174.3033	&	606.7498	\\ \hline

\end{tabular}
\caption{Results for maximal wave speed $S_R$ for tests $1$ to $6$. The exact solution is shown in column 2. Existing estimates are shown in columns  $3$ to $6$. Newly proposed estimates of the present paper are displayed in columns  $7$ to $10$. A value in red indicates thar it fails to bound  the exact wave speed. Units are: $[S]=cm/s$.}
\label{Tab:BFE_Tests_SR}            
\end{center}
\end{table}

\vspace{0.1cm}

\vspace{1mm}
\begin{table}[h!]
\begin{center}
\footnotesize
\vspace{1mm}
\begin{tabular}{|c|c|c|c|c|c|c|c|c|c|} \hline
Test & $S^{Ex}_{L}$  & $S^{Dav_a}_{L}$ & $S^{Dav_b}_{L}$   & $S^{To}_{L}$ & $S^{GP}_L$& ${S}^{TMS_a}_{L}$  & ${S}^{TMS_b}_{L}$ & ${S}^{TMS_c}_{L}$ & ${S}^{TMS_d}_{L}$\\ \hline
1	&	-154.3033	&	-154.3033	&	-154.3033	&	-154.3033	&	-154.3033	&	-154.3033	&	-154.3033	&	-154.3033	&	-600.7046	\\ \hline
2	&	-154.3033	&	-154.3033	&	-154.3033	&	-154.3033	&	-154.3033	&	-154.3033	&	-154.3033	&	-154.3033	&	-412.2919	\\ \hline
3	&	-352.3120	&	\textcolor{red}{\bf-27.4394}	&	\textcolor{red}{\bf-154.3033}	&	-816.8334	&	-816.8334	&	-784.3538	&	-473.8520	&	-3986.0259	&	-616.7498	\\ \hline
4	&	-153.8834	&	\textcolor{red}{\bf -144.3033}	&	-174.3033	&	-153.8893	&	-153.8893	&	-153.8836	&	-153.8836	&	-174.3033	&	-636.7498	\\ \hline
5	&	-149.0520	&	\textcolor{red}{\bf -54.3033}	&	-345.9311	&	-155.5183	&	-155.5183	&	-150.1215	&	-150.4213	&	-345.9311	&	-783.2608	\\ \hline
6	&	-164.3033	&	-164.3033	&	-164.3033	&	-164.3033	&	-164.3033	&	-164.3033	&	-164.3033	&	-164.3033	&	-596.7498	\\ \hline

\end{tabular}
\caption{Results for minimal wave speed $S_L$ for tests $1$ to $6$. The exact solution is shown in column 2. Existing estimates are shown in columns  $3$ to $6$. Newly proposed estimates of the present paper are displayed in columns  $7$ to $10$. A value in red indicates thar it fails to bound  the exact wave speed. Units are: $[S]=cm/s$.}
\label{Tab:BFE_Tests_SL}            
\end{center}
\end{table}

\section{Conclusions}

In this paper we have provided theoretical bound estimates for the two fastest wave speeds emerging from the solution of the Riemann problem for the Euler equations, the shallow water equations and the blood flow equations for arteries. Several new, non-iterative approaches have been presented. The resulting bounds range from crude but simple estimates to accurate but sophisticated estimates that make limited use of information from the exact solution of the Riemann problem. We have assesed our wave speed estimates against exact solutions and against previously proposed wave speed estimates, through a carefully chosen suite of test problems with exact solution. The results confirm that the derived theoretical bounds actually bound from below and above the minimal and maximal wave speeds respectively. The results also show that popular, previously proposed estimates do not bound the true speeds in general.

The results of the present study are directly relevant to the implementation of large classes of explicit numerical methods intended for systems hyperbolic equations, such as finite volume and  discontinuous Galerkin finite element methods, amongs others. Such relevance is seen immediately when enforcing the Courant 
stability condition, in which a reliable estimate of the maximum wave speed is required, where reliable means that the scheme remains within its regions of monotonicity and of linear stability.
Moreover, some numerical methods utilize local wave speed estimates to construct the {\it numerical flux}, such as in HLL-type fluxes, for example. For such methods it is also desirable to utilise wave speed estimates that bound the minimal and maximal wave speeds. As seen in the preliminary discussion in  \cite{Toro:2001a}, Sec. 10.5.2, under estimating the extreme wave speeds may drive the method outside its region of monotonicity, even if remaining within its region of linear stability. All these issues are not adressed in the present paper, but are the subject of current investigations, the results of which will be communicated in the near future.

\newpage

\section*{In memoriam}

This paper is dedicated to the memory of Dr. Douglas Nelson Woods ($^*$January 11\textsuperscript{th} 1985 - $\dagger$September 11\textsuperscript{th} 2019),  
promising young scientist and post-doctoral research fellow at Los Alamos National Laboratory.   
Our thoughts and wishes go to his wife Jessica, to his parents Susan and Tom, to his sister Rebecca and to his brother Chris,  whom he left behind.

\newpage

\begin{center}
{\bf ACKNOWLEDGEMENTS:} 
\end{center}

EFT  acknowledges the partial support received from the European Union's Horizon 2020 Research and  
Innovation Programme under the project \textit{ExaHyPE}, grant agreement number no. 671698 (call FETHPC-1-2014). 
He also acknowledges the financial support received from the Italian Ministry of Education, University and Research (MIUR) in the  
frame of the Departments of Excellence Initiative 2018--2022 attributed to DICAM of the University of Trento (grant L. 232/2016) 
and in the frame of the PRIN 2017 project \textit{Innovative numerical methods for evolutionary partial differential equations and applications}. \\

LOM acknowledges funding from the Italian Ministry of Education, University 
and Research (MIUR) in the frame of the Departments of Excellence Initiative 2018-2022   
attributed to the Department of Mathematics of the University of Trento (grant L. 232/2016) and in the frame of the PRIN 2017 project \textit{Innovative numerical methods for evolutionary partial differential equations and  applications}. 
Furthermore, LM has also received funding from the University of Trento via the Strategic Initiative \textit{Modeling and Simulation}.\\

All three authors gratefully acknowledge the contribution of PhD students Morena Celant and Beatrice Ghitti (Mathematics Department, University of Trento, Italy) in 
tackling some challenging algebraic tasks in the proof of Lemma 9 for the blood flow equations.

\newpage

\begin{center}
{\bf MANUSCRIPT HISTORY:} 
\end{center}

This manuscript was submitted for publication to Computers and Fluids on 3rd February 2020.

\newpage

\bibliography{refs-latest}

\bibliographystyle{plain}

\end{document}